\documentclass[11pt, reqno]{amsart}
\usepackage{ graphicx }
\usepackage[utf8]{ inputenc }
\usepackage{ svg }
\usepackage{ amsmath }
\usepackage{ amsthm }
\usepackage{ amssymb }
\usepackage{ dsfont }
\usepackage{ xargs }
\usepackage{ enumitem }
\usepackage{ esint }
\usepackage{ ragged2e }
\usepackage{ mathrsfs }
\usepackage{ scalerel }
\usepackage{ cancel }
\usepackage{ todonotes }
\usepackage{comment}

\usepackage[hypertexnames = false]{ hyperref }

\theoremstyle{plain}
\newtheorem{theorem}{Theorem}[section]
\newtheorem{proposition}[theorem]{Proposition}
\newtheorem{corollary}[theorem]{Corollary}
\newtheorem{lemma}[theorem]{Lemma}

\theoremstyle{definition}
\newtheorem{definition}[theorem]{Definition}
\newtheorem{remark}[theorem]{Remark}
\newtheorem{example}[theorem]{Example}

\numberwithin{equation}{section}

\newcommand{\RR}{\mathds{R}}

\newcommand{\NN}{\mathds{N}}
\newcommand{\CC}{\mathds{C}}
\newcommand{\GG}{\mathds{G}}
\newcommand{\HH}{\mathds{H}}

\newcommand{\g}{\mathfrak{g}}
\newcommand{\h}{\mathfrak{h}}

\newcommand{\Hormander}{Hörmander}
\newcommand{\Nikodym}{Nikodým}
\newcommand{\Schrodinger}{Schrödinger}
\newcommandx{\Haus}[1][1=s]{\mathscr{H}^{#1}}
\newcommandx{\Leb}[1][1=s]{\mathscr{L}^{#1}}
\newcommandx{\norm}[2][1=,2=]{\| {#1} \|_{#2}}
\newcommand{\loc}{\text{loc}}
\newcommand{\supp}{\operatorname{supp}}
\newcommand{\diverg}{\operatorname{div}}

\newcommand{\Span}{\operatorname{span}}

\def\lesssimA#1#2{\mathrel{\vcenter{\offinterlineskip
			\ialign{\hfil##\hfil\cr\vspace{2pt}\cr$#1<$\cr$#1\sim$\cr}
}}}
\def\lesssim{\mathpalette\lesssimA{}}

\begin{document}
	
	\pagestyle{plain}
	\thispagestyle{empty}
	
	\author{Guido De Philippis}
	\address[G.D.P.]{Dipartimento di Matematica ``T. Levi-Civita'', via Trieste 63, 35121 Padova, Italy.}
	\email{guido.dephilippis@math.unipd.it}
	
	\author{Alberto Gervani}
	\address[A.G.]{Scuola Normale Superiore, Piazza dei Cavalieri 7, 56126 Pisa, Italy \&
    Scuola Galileiana di Studi Superiori,  Via Venezia 20, 35131 Padova, Italy.}
	\email{alberto.gervani@sns.it}
	
	\author{Annalisa Massaccesi}
	\address[A.M.]{Dipartimento di Matematica ``T. Levi-Civita'', via Trieste 63, 35121 Padova, Italy.}
	\email{annalisa.massaccesi@unipd.it}
	
	\author{Davide Vittone}
	\address[D.V.]{Dipartimento di Matematica ``T. Levi-Civita'', via Trieste 63, 35121 Padova, Italy.}
	\email{davide.vittone@unipd.it}
	
	\title{$\mathscr{A}$-free measures and the Rank-one Theorem in Carnot Groups}

    \date{\today}

    \subjclass{49Q15, 28A75, 49Q20, 28B20, 53C17.}
\keywords{Rank-one theorem, $\mathscr A$-free measures, Carnot groups.}

    \begin{abstract}
        We establish a structure theorem for measures satisfying left-invariant PDE constraints in Carnot groups. Our generalization, that takes inspiration from the paper~\cite{DPR} by the first named author and F. Rindler, requires the introduction of the hypoelliptic wave cone, a sub-Riemannian version of the wave cone from the theory of Compensated Compactness, and it employs classical results in Harmonic Analysis on homogeneous spaces. Then, we utilize this result to extend the Rank-one Theorem for functions of bounded horizontal variation ($BV_H$) to all Carnot groups. This is achieved by considering a suitable curl-type left-invariant operator, that provides necessary differential constraints for horizontal gradients, and by studying the hypoellipticity of said constraints.
    \end{abstract}

	\maketitle
    
	\tableofcontents
	
	\section{Introduction}
    The rank-one property for maps with bounded variation ($BV$) was proved by G.~Alberti~\cite{alberti} by what is generally regarded as a tour-de-force in Geometric Measure Theory. More recently, two independent proofs were found in~\cite{DPR}, where PDEs techniques were employed, and in~\cite{massaccesi-vittone}, by a geometric argument.
    See~\cite{ambrosio-fusco-pallara} for more details on $BV$ functions.
    
    A related question, that arises quite naturally and that was first investigated in~\cite{don-massaccesi-vittone},  concerns the validity of the rank-one property for {\em horizontally BV} maps in  sub-Riemannian geometry, in particular in the setting of {\em Carnot groups}. Recall that a Carnot group $\GG$ is a connected, simply connected and nilpotent Lie group whose Lie algebra $\mathfrak g$ is stratified; in particular, $\mathfrak g$ is bracket-generated by its first {\em horizontal} layer $\mathfrak g_1$, of which we fix a basis $X_1,\dots,X_\rho$. Recall also that $\GG$ can be identified with $\RR^d$ by exponential coordinates, according to which the Lebesgue measure $\mathscr L^d$ is a Haar measure. A  map $u:\Omega\to\RR^h$ defined on an open subset $\Omega\subset\GG$ has bounded horizontal variation ($u\in BV_H(\Omega,\RR^h)$) if its horizontal distributional derivatives  $D_Hu:=(X_1u,\dots,X_\rho u)$ are represented by a $(h\times \rho)$-matrix-valued measure with finite total variation. 
    The first main result of this paper is the proof of the rank-one property for $BV_H$ functions, where we denote by $|D_H^s u|$ the total variation of the singular part $D_H^s u$ of $D_Hu$ with respect to the Haar measure $\mathscr L^d$.
    
    \begin{theorem}[Rank-one Theorem in Carnot groups]
        \label{rank-one thm}
        Let $\Omega$ be an open subset of a Carnot group  $\GG$. Then, for any $u \in BV_{H,\loc}(\Omega;\RR^h)$ the singular part $D_H^su$ of $D_Hu$ is a rank-one measure, i.e.
        \[
            \frac{dD_Hu}{d|D_Hu|}(x) \quad \text{has rank one for } |D_H^su| \text{-almost every } x \in \Omega,
        \]
        where $\frac{dD_Hu}{d|D_Hu|}$ denotes the (matrix-valued) Radon-\Nikodym \ derivative of $D_Hu$ with respect to its total variation $|D_Hu|$.
    \end{theorem}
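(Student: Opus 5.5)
The plan follows the strategy of~\cite{DPR}: first prove a general structure theorem for left-invariant $\mathscr A$-free measures on $\GG$, then apply it to horizontal gradients. Let $\mathscr A$ be a left-invariant, homogeneous linear differential operator of order $k$ built from the horizontal fields $X_1,\dots,X_\rho$. That is, $\mathscr A\mu=\sum_{|\alpha|= k} A_\alpha X^\alpha\mu$, where the $X^\alpha$ are noncommutative monomials of homogeneous degree $k$. We also allow lower-order terms, so that the constraint holds up to a measure or a more regular remainder. For each $\xi$ we ask whether the constant-coefficient operator $\mathscr A\,\xi$, acting on scalar functions, is hypoelliptic in the sense of Hörmander/Rockland. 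Via Rockland's theorem this amounts to injectivity of $\pi(\mathscr A\xi)$ for every nontrivial irreducible unitary representation $\pi$. The \emph{hypoelliptic wave cone} $\Lambda_{\mathscr A}$ is the set of directions $\xi$ for which this fails.

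\textbf{Structure theorem.} The first step is to show that if $\mathscr A\mu$ is controlled, then $\frac{d\mu}{d|\mu|}(x)\in\Lambda_{\mathscr A}$ for $|\mu^s|$-a.e.\ $x$. The argument is by contradiction and blow-up. Take a point $x$ where the polar $\xi_0=\frac{d\mu}{d|\mu|}(x)\notin\Lambda_{\mathscr A}$ and where $\mu$ is singular. Form the intrinsic dilations $\delta_{1/r}$ and left translations $\tau_{x^{-1}}$, and normalize by $|\mu|(B(x,r))$. Lower-order terms vanish under this blow-up. Using Preiss-type tangent measures adapted to homogeneous groups, we obtain a tangent measure $\nu=\xi_0|\nu|$ with $\mathscr A(\xi_0\nu)=0$.

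The key point is then a quantitative estimate. Since $\mathscr A\xi_0$ is hypoelliptic and homogeneous, it admits a homogeneous left-parametrix $P$ in the Folland--Stein calculus of homogeneous kernels on homogeneous groups. This gives $\mathrm{Id}=P\circ(\mathscr A\xi_0)+R$ with $R$ smoothing. We use it with a cutoff on the blow-up sequence, decomposed as $\mu_r=\xi_0|\mu_r|+(\text{polar error})$, where the error is small in total variation by Lebesgue differentiation of the polar. Applying $P$ to the equation, $|\mu_r|$ is written as a singular integral of order $-k$ applied to $k$ horizontal derivatives of a small measure, plus a smooth term. Folland--Stein kernels of type $0$ give weak-$L^1$ bounds, with $L^1$ compactness after splitting off the singular part. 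So $|\mu_r|$ equals an $L^1$-precompact part plus a part that is small in total variation. In the limit, $|\nu|$ is therefore absolutely continuous with respect to $\mathscr L^d$. This contradicts the fact that at $|\mu^s|$-a.e.\ point one can choose tangent measures that are not absolutely continuous (the standard Rindler trick). \emph{This is the step I expect to be hardest.} The Euclidean Fourier multiplier argument must be replaced by operator-valued harmonic analysis on $\GG$, and the remainder estimates must be made uniform in the blow-up scale $r$.

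\textbf{Application to $D_Hu$.} Take $\mu=D_Hu$, an $h\times\rho$ matrix measure. It satisfies a horizontal curl-type constraint. In the Euclidean case this would be $X_iX_ju-X_jX_iu=[X_i,X_j]u$, but the commutator is not horizontal, so this is not left-invariant-closed within horizontal derivatives. Instead we use the relations in $\mathfrak g$: since $[X_i,X_j]$ is a combination of higher-layer fields, which are themselves iterated brackets of the $X_l$, we can construct for each component a homogeneous operator $\mathscr A$ with $\mathscr A(D_Hu)=0$ exactly. Concretely, we pick a linear relation among degree-$k$ brackets in the free algebra versus $\mathfrak g$. The simplest choice is $\sum c_{\alpha}X^{\alpha}$ of degree $s+1$ ($s$ the step) written in two ways. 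The remaining task is to compute $\Lambda_{\mathscr A}$ and show $\Lambda_{\mathscr A}\subset\{\text{rank-one matrices}\}$. For $\xi=a\otimes b$ of rank $\ge2$, we must show that the scalar operator $\mathscr A\xi$ is hypoelliptic, i.e.\ that $\pi(\mathscr A\xi)$ is injective. The idea is that two independent columns mean that $\xi$ forces $X_bv$ and $X_{b'}v$ to satisfy a curl relation, which, tested in each representation, yields $\pi(X)\pi(v)=0$ for all horizontal $X$, hence $v=0$ by irreducibility. This requires a careful choice of $\mathscr A$, combining all curl relations so that rank $\ge 2$ yields a Rockland operator. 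Combined with the structure theorem, this yields Theorem~\ref{rank-one thm}.
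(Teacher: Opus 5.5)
Your overall route is the paper's. It is a De Philippis--Rindler blow-up for $\mathscr A$-free measures, with a wave cone defined through Rockland/Helffer--Nourrigat injectivity, followed by an order-$s$ curl that comes from the vanishing of $(s+1)$-fold brackets. Your condition on $\pi(\mathscr A\xi)$ is equivalent to the paper's condition on $\mathcal B_\lambda^*\mathcal B_\lambda$, since $\langle\Pi(\mathcal B_\lambda^*\mathcal B_\lambda)\psi,\psi\rangle=\|\Pi(\mathcal B_\lambda)\psi\|^2$.

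\textbf{Main gap: closing the contradiction.} The way you close the contradiction does not work, for two reasons.

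First, the principal term is not small in total variation. It is an order-zero singular integral applied to the cut-off polar error $\eta V_j$, and such operators are only of weak type $(1,1)$. From $\|V_j\|_{L^1}\to 0$ you therefore only get $f_j\to0$ in $L^{1,\infty}$ (hence in measure) and in $\mathscr D'$.

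Second, absolute continuity of the tangent measure gives no contradiction. It does follow, because $f_j\to0$ in $\mathscr D'$, but it is useless. It is false that at $|\mu^s|$-a.e.\ point some tangent measure fails to be absolutely continuous: there are purely singular measures all of whose tangent measures are multiples of Lebesgue measure. Examples are Preiss's example, or Riesz products $\prod_k(1+a_k\cos(3^k x))$ with $a_k\to0$ and $\sum_k a_k^2=\infty$.

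What the paper proves instead, following \cite{DPR}, is \emph{strong} convergence $|\nu_j-\nu|(B^\GG_{1/2})\to0$ of the blow-ups. Since each $\nu_j$ is singular and $\nu\ll\mathscr L^d$, this forces $\nu(B^\GG_{1/2})=0$, contradicting $\nu\neq0$.

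The idea you are missing, which upgrades weak-$L^1$ smallness to strong convergence, is positivity:
\begin{itemize}
\item Write $\eta u_j=f_j+g_j$ with $u_j=\nu_j*\varphi_{\varepsilon_j}\ge 0$. This gives $f_j^-\le|g_j|$.
\item The bound $|g_j|$ is equi-integrable because $g_j$ is precompact in $L^1_{\rm loc}$. This uses compactness of Bessel potentials from $L^1_c$ to $L^p$, together with $L^p$-boundedness of order-zero pseudodifferential operators.
\item The Vitali-type lemma \cite[Lemma 2.2]{DPR} then yields $f_j\to0$ in $L^1_{\rm loc}$, so $\eta u_j\to\eta\nu$ in $L^1$.
\item The choice of the mollification scale $\varepsilon_j$ transfers this to $|\nu_j-\nu|(B^\GG_{1/2})\to 0$.
\end{itemize}

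\textbf{Two further steps are asserted rather than supplied.}

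(a) ``Lebesgue differentiation of the polar'' with respect to $|\mu|$ on homogeneous balls is not available in a general Carnot group. There Besicovitch covering can fail and $|\mu|$ need not be doubling. The paper proves differentiation only along a point-dependent sequence of radii, via a weak doubling lemma and the $5r$-covering theorem, and works with tangent measures along such sequences.

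(b) In the application you do not specify $\mathscr A$. Your representation argument (two independent rows force $\Pi[X_i]\psi=0$ for all $i$, impossible for nontrivial $\Pi$) covers only a first-order curl. For $s\ge 2$ that is not a constraint satisfied by $D_Hu$. The paper sets
\begin{itemize}
\item $\mathscr A^1(w)_{(i,k)}=X_iw_k-X_kw_i$, and
\item $\mathscr A^{z+1}(w)_{(i,I)}=X_i\mathscr A^z(w)_I-X_Iw_i$,
\end{itemize}
so that $\mathscr A^s(D_Hu)_{(i,I)}=X_{(i,I)}u=0$. It then needs a reduction to the first-order case. Suppose $\psi\neq0$ lies in the kernel of all $\Pi[\mathcal B^{z+1}_{\lambda^j,(i,I)}]$. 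Antisymmetrizing the recursion $\mathcal B^{z+1}_{\lambda^j,(i,I)}=X_i\mathcal B^z_{\lambda^j,I}-\lambda^j_iX_I$ in pairs of indices, level by level, gives $\Pi[\mathcal B^1_{\lambda^j,(i,k)}]^{z+2}\psi=0$. Skew-adjointness then gives $\Pi[\mathcal B^1_{\lambda^j,(i,k)}]\psi=0$, which is the base case.

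Also, ``$\xi=a\otimes b$ of rank $\ge 2$'' is a slip.
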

    
    Theorem \ref{rank-one thm} has been proved in the affirmative only when  $\GG$ belongs to a certain class of Carnot groups that includes the Heisenberg groups $\HH^n$  with $n\geq 2$, see~\cite{don-massaccesi-vittone}. The strategy  in~\cite{don-massaccesi-vittone} follows the geometric one devised in~\cite{massaccesi-vittone} and it does not cover general Carnot groups: the case of the first Heisenberg group $\HH^1$, for instance, was left as an open problem. On the contrary, the proof of Theorem~\ref{rank-one thm} we present here is based on the following sub-Riemannian version of~\cite[Theorem~1.1]{DPR}: this is the second main result of our paper.
            
    \begin{theorem}\label{main theorem}
        Let $\Omega \subseteq \GG$ be an open set and let $\mathscr{A}$ be a left-invariant differential operator of order $k$. Then, for any $\mathscr{A}$-free\footnote{A Radon measure $\mu \in \mathcal{M}(\Omega;\RR^m)$ is \emph{$\mathscr{A}$-free} if $\mathscr{A} \mu = 0$ in the sense of distributions, see Definition~\ref{def_Afree}.} vector-valued measure $\mu$ we have
        \[
            \frac{d\mu}{d|\mu|}(x) \in \Lambda^H_\mathscr{A} \qquad \text{for } |\mu^s|\text{-a.e. } x \in \Omega,
        \]
        where $\mu^s$ is the singular part in the Lebesgue-Radon-\Nikodym \ decomposition of $\mu$ w.r.t. the Haar measure $\Leb[d]$ on $\GG \equiv \RR^d$.
    \end{theorem}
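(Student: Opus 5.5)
We follow the blow-up strategy of~\cite{DPR}, adapted to the homogeneous structure of $\GG$. Here $\Lambda^H_\mathscr{A}$ is the hypoelliptic wave cone. We use the characterization that $\lambda \notin \Lambda^H_\mathscr{A}$ exactly when the homogeneous left-invariant operator $\mathscr{A}^{k}(\lambda\,\cdot)$, built from the top-order (homogeneous of degree $k$ with respect to the dilations $\delta_r$) part of $\mathscr{A}$ applied to scalar functions times $\lambda$, is hypoelliptic on $\GG$.

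\textbf{Step 1: reduction to a blow-up point.} Suppose, for contradiction, that the set $E$ of points $x$ with $\frac{d\mu}{d|\mu|}(x)\notin\Lambda^H_\mathscr{A}$ has positive $|\mu^s|$-measure. By Lebesgue differentiation we pick $x_0\in E$ with the following properties: $x_0$ is a Lebesgue point of the polar $\frac{d\mu}{d|\mu|}$ with respect to $|\mu^s|$, with value $\lambda_0\notin\Lambda^H_\mathscr{A}$; and
\[
\lim_{r\to0}\frac{|\mu^a|(B_r(x_0))}{|\mu^s|(B_r(x_0))}=0,\qquad \lim_{r\to0}\frac{|\mu^s|(B_r(x_0))}{r^Q}=\infty,
\]
where $Q$ is the homogeneous dimension and balls are taken in a homogeneous distance. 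We then blow up with the left translation and dilation $T_r(y)=\delta_{1/r}(x_0^{-1}y)$, setting $\mu_r=c_r\,(T_r)_\#\mu$ with $c_r=|\mu|(B_r(x_0))^{-1}$. Since $\mathscr{A}$ is left-invariant, $\mathscr{A}$ commutes with translations. Each homogeneous component of degree $j\le k$ rescales by $r^{-j}$, so $\mathscr{A}^k\mu_r = -\sum_{j<k} r^{k-j}\mathscr{A}^j\mu_r$, and these lower-order terms vanish in the limit. Using a doubling or tangent-measure argument (Preiss-type choice of radii), along a subsequence $\mu_r\rightharpoonup\nu=\lambda_0|\nu|$ with $|\nu|\neq0$ on $B_1$. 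Moreover, $|\mu_r-\lambda_0|\mu_r||(B_1)\to0$, and the absolutely continuous parts $\mu_r^a$ tend to zero in total variation.

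\textbf{Step 2: regularity from hypoellipticity.} Write $\mu_r=\lambda_0|\mu_r|+(\text{error})$ with error $\to0$ in total variation. Then $P u_r = f_r$, where $P=\mathscr{A}^k(\lambda_0\,\cdot)$ is homogeneous and hypoelliptic, $u_r=\chi|\mu_r|$ is localized by a cutoff $\chi$, and $f_r$ consists of terms of three kinds: derivatives of order $\le k$ of the error measures, lower-order $r^{k-j}$ terms, and commutator terms with $\chi$ involving derivatives of order $<k$ of $|\mu_r|$. Here we invoke the classical harmonic analysis on homogeneous groups (Folland, Helffer--Nourrigat / Rockland theory): a homogeneous left-invariant hypoelliptic operator of degree $k$ admits a homogeneous fundamental solution $K$ of degree $k-Q$. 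Convolution with the derivatives of $K$ of degree $-Q+\ell$ is then a singular integral or fractional integration of order $k-\ell$. Consequently $u_r=K*Pu_r$ (up to smooth terms) decomposes as
\[
u_r = g_r + h_r,\qquad \|g_r\|_{L^1}\to 0 \text{ in a weaker sense},\qquad \{h_r\}\ \text{precompact in } L^1_{\loc}.
\]
Precisely, the terms with derivative order $<k$ gain at least one homogeneous order of smoothing, which gives $L^1$ compactness by a Rellich-type argument on $\GG$. The top-order error terms are Calderón--Zygmund operators of weak type $(1,1)$ applied to measures whose mass tends to zero, so they tend to $0$ in $L^{1,\infty}$, hence in measure.

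\textbf{Step 3: contradiction, and the main obstacle.} From Step 2, $|\mu_r|$ restricted to $B_{1/2}$ is, up to sets of vanishing measure, close in $L^1$ to absolutely continuous densities. The limit $|\nu|$ is therefore absolutely continuous with respect to $\Leb[d]$, and more precisely the $\mu_r$ are uniformly integrable on $B_{1/2}$ modulo a vanishing part. But $x_0$ was chosen to be a singular point with $|\mu^s|(B_r)/r^Q\to\infty$, and $|\mu_r|(B_{1/2})$ stays bounded below by the doubling choice. So the rescaled measures concentrate on $\Leb[d]$-small sets: the measure $\mu_r$ of a set of Haar measure $\le \varepsilon$ carries mass $\ge c>0$. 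This contradicts the convergence in measure plus $L^1$-compactness, which forces $|\mu_r|(A)\to 0$ uniformly on sets $A$ of small measure, up to an error $o(1)$.

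The main obstacle is Step 2. We must establish the weak-$(1,1)$ and compactness properties of the convolution operators with homogeneous kernels on a general Carnot group, and we must handle the fact that only the homogeneous top-order part of $\mathscr{A}$ is hypoelliptic, which requires matching the hypoelliptic wave cone definition exactly. I would first try to prove it using the Folland--Stein theory of kernels of type $\alpha$ together with the Christ / Coifman--Weiss Calderón--Zygmund theory on spaces of homogeneous type.
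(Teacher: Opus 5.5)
Your overall architecture (blow-up at a bad point, inverting the top-order part via hypoellipticity, weak-$(1,1)$ for the top-order error terms, compactness for the lower-order terms) is the paper's. However, the decisive step, Step~3, does not go through as written.

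\textbf{The gap in Step~3.} You claim that convergence in measure of $g_r$, plus $L^1_{\loc}$-precompactness of $h_r$, ``forces $|\mu_r|(A)\to0$ uniformly on sets $A$ of small measure''. This is false. A sequence tending to $0$ in measure (or in $L^{1,\infty}$) can concentrate a fixed amount of mass on sets of vanishing Lebesgue measure, e.g.\ $r^{-d}\mathds{1}_{B^{\RR^d}(0,r)}$. That is exactly how a singular measure behaves, so the weak-type bound alone cannot exclude it.

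The missing idea is the positivity/Vitali upgrade (the Vitali-type lemma in \S\ref{subsec_tools}, i.e.\ \cite[Lemma~2.2]{DPR}):
\begin{enumerate}
\item Since $u=g+h\ge0$, you have $g^-\le|h|$. So the negative parts are equi-integrable and tend to $0$ in measure, hence tend to $0$ in $L^1_{\loc}$.
\item Moreover $g\overset{*}{\rightharpoonup}0$ in $\mathscr{D}'$, because $\langle g,\varphi\rangle$ is the pairing of the error, which tends to $0$ in total variation, with $T^*\varphi$.
\item Together these give $g^+\to0$, hence $g\to0$, in $L^1_{\loc}$.
\end{enumerate}
Only then is the family precompact in $L^1$, yielding $\nu\llcorner B^\GG_{1/2}\ll\Leb[d]$ together with strong convergence.

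This argument needs functions, not measures. Your $u_r=\chi|\mu_r|$ is mostly singular, and an order-zero singular integral of a measure is in general only a distribution (a principal value, possibly with a Dirac component). So ``$g_r\to0$ in $L^{1,\infty}$'' is not meaningful as stated.

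The paper therefore mollifies, setting $u_j=\nu_j*\varphi_{\varepsilon_j}$. It chooses $\varepsilon_j$ by lower semicontinuity of the total variation so that $|\nu_j-\nu|(B^\GG_{1/2})\le|u_j-\nu|(B^\GG_{1/2})+1/j$. Then $L^1$ convergence of $\eta u_j$ gives $|\nu_j-\nu|(B^\GG_{1/2})\to0$. This contradicts the singularity of $\nu_j$ with respect to the nonzero absolutely continuous limit $\nu$. The condition $|\mu^s|(B_r)/r^Q\to\infty$ plays no role in the contradiction.

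\textbf{Step~1: differentiation along all radii.} You use full-limit Lebesgue--Besicovitch differentiation with respect to the non-doubling measure $|\mu|$: Lebesgue points of the polar with respect to $|\mu^s|$, and $|\mu^a|(B_r)/|\mu^s|(B_r)\to0$ as $r\to0$. This may fail on Carnot groups endowed with a homogeneous distance (cf.\ \cite{LeDonne-Rigot}). That is why the paper proves the weaker Theorem~\ref{weak besicovitch thm Carnot}, which gives these properties only along a sequence of radii. It then works along a sequence on which a nonzero tangent measure also exists.

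\textbf{Step~2: inverting an overdetermined system.} The operator $P=\mathscr{A}^k(\lambda_0\,\cdot)$ is an overdetermined system, so it has no fundamental solution, and ``$u_r=K*Pu_r$'' does not parse. You must instead invert the scalar, self-adjoint, order-$2k$ operator $\mathcal{B}^*_{\lambda_0}\mathcal{B}_{\lambda_0}$. It is hypoelliptic precisely because $\lambda_0\notin\Lambda^H_\mathscr{A}$. Use $K*\mathcal{B}^*_{\lambda_0}$ as a left inverse, so that the relevant order-zero operator is $v\mapsto(\mathcal{B}^*_{\lambda_0}\mathscr{A}^k v)*K$, as in Lemma~\ref{lemma singular integrals}. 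Your reformulation of $\Lambda^H_\mathscr{A}$ via hypoellipticity of $\mathcal{B}_{\lambda_0}$ itself is harmless by Rockland's criterion, since $\langle\Pi(\mathcal{B}^*_\lambda\mathcal{B}_\lambda)v,v\rangle=\|\Pi(\mathcal{B}_\lambda)v\|^2$.
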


    The symbol $\Lambda^H_\mathscr A$ denotes the {\em hypoelliptic wave cone} of $\mathscr A$, introduced in Definition~\ref{definition hypoelliptic wavecone}, which is in general contained in the (Euclidean) wave cone $\Lambda_\mathscr A$ considered in~\cite{DPR}: see Remark~\ref{rem_wavehypowave}. Theorem~\ref{rank-one thm} follows by applying Theorem~\ref{main theorem} to a suitable curl-type operator $\mathscr A^s$, introduced in Carnot groups of step $s$ in Section~\ref{necessary constraints}, for which  any horizontal gradient $D_Hu$ is always $\mathscr A^s$-free and such that the cone $\Lambda^H_{\mathscr A^s}$ is constituted by matrices of rank-one. It is worth noticing that the  inclusion $\Lambda^H_{\mathscr A} \subsetneq \Lambda_{\mathscr A}$  can be strict (see also Remark~\ref{rem_dueconiHeisenberg} below):  it was observed  in~\cite[Remarks~5.5 and~5.6]{don-massaccesi-vittone}  that the Euclidean wave cone  of the {\em Heisenberg curl} coincides with the full space of matrices\footnote{While the hypoelliptic wave cone  is the set of rank-one matrices.}, thus preventing a direct application of~\cite[Theorem~1.1]{DPR}. In this sense, Theorem~\ref{main theorem} provides a meaningful improvement of~\cite[Theorem~1.1]{DPR}.
    \begin{remark}
        It is known that the vanishing of the Heisenberg curl is a necessary and sufficient condition for a measure to be a horizontal gradient. One might wonder whether, after getting rid of some proper redundancies, our operators $\mathscr A^s$ provide also {\em sufficient} conditions in general Carnot groups, or at least in {\em free} ones (see ~\cite[§9.1.1 and §11.1.3.6]{LeDonne_libro}). We did not investigate this question as it was not among the scopes of the present paper;  let us mention, however, that this is true in $\HH^1 $ by considering certain components of $\mathscr A^2$ -- namely, the operator\footnote{This operator, by the way, coincides with the well-known one introduced by M.~Rumin~\cite{Rumin94JDG}.} $(\mathscr{A}^2_{(1,1,2)},\mathscr{A}^2_{(2,1,2)})$, see Remark~\ref{rem_rumin}.
    \end{remark}

    The proof of Theorem \ref{main theorem} is inspired by the strategy of the original Euclidean proof of~\cite[Theorem~1.1]{DPR}, with some important technical obstructions to be remedied.
    First, \cite[Theorem 1.1]{DPR} relies heavily on the classical Besicovitch differentiation theorem, which may fail in Carnot groups endowed with the Carnot-Carathéodory distance; see e.g.~\cite{LeDonne-Rigot}. We avoid this possibility by proving a weaker localized version of the differentiation theorem in Appendix \ref{app_Besicovitch}, which is enough for the sake of the proof.
    Second, the existence of ``intrinsic'' tangent measures (at almost every point) is fundamental, and Appendix \ref{app_misuretangenti} provides an elementary generalization to Carnot Groups of well-known results in Geometric Measure Theory. In this regard, we refer the curious reader to the recent advances made in \cite{Antonelli-Merlo_existence, Antonelli-Merlo_Marstrand-Mattila, Antonelli-Merlo_representation}.

    The idea of the proof is to argue by contradiction, blowing up $\mu^s$ around the base point at which the assertion fails. A contradiction is reached by showing that we can choose a blow-up sequence (of singular measures) that converges weak-* and in total variation to a non-null tangent measure which is absolutely continuous w.r.t. the Haar measure. To prove this, one mollifies and localizes the blow-up sequence and concludes via a corollary of the Vitali convergence theorem (which is akin to an $L^1$ compensated compactness method). The key point here is that in \cite{DPR} this is achieved via pseudo-differential calculus, and in particular by continuity and compactness results for Singular Integral operators, such as the \Hormander-Mihlin multiplier theorem. A similar theory has been developed, in the non-commutative setting of Nilpotent Lie Groups, starting from the $1970$'s, and the corresponding results are what we employ in the present paper. The caveat is that Fourier Analysis no longer suffices, whereas the right language is that of Representation Theory. Similarly, the role played by ellipticity in the definition of the wave cone $\Lambda_\mathscr{A}$ in the Euclidean setting is replaced by {\em hypoellipticity}, which is ultimately what gives rise to the definition of the hypoelliptic wave cone $\Lambda^H_\mathscr{A}$.
    
    \begin{remark}
        As stated, the proof of Theorem~\ref{main theorem} presented in §\ref{subsec_proof} works for left-invariant operators, but it can be easily adapted for variable coefficients operators, e.g. as in \eqref{eq A}, provided  the coefficients $A_\alpha : \Omega \rightarrow \RR^{n \times m}$ are sufficiently regular. In that case, the statement reads
        \[
        \frac{d\mu}{d|\mu|}(x) \in \Lambda^H_\mathscr{A}(x) \qquad \text{for } |\mu|^s\text{-a.e. } x \in \Omega.
        \]
        Moreover, Theorem \ref{main theorem} holds also for measures $\mu$ which satisfy
        \[
        \mathscr{A} \mu = \sigma \qquad \text{for some } \sigma \in \mathcal{M}(\Omega; \RR^n)
        \]
        and not just in the particular case in which $\sigma = 0$.
        See \cite[Remarks 1.3 and 1.4]{DPR} for more details.
    \end{remark}\medskip

    {\bf Plan of the paper.} 
    The paper is organized as follows. In Section~\ref{sec_Carnot} we introduce Carnot groups and the relevant notation, in particular about Heisenberg groups. In Section~\ref{sec_reprhypo} we introduce some classical material about Lie group representations, hypoelliptic operators and singular integrals that are needed for the proof of Theorem~\ref{main theorem}, which is provided in Section~\ref{sec_mainresult}. In Section~\ref{sec_rk1} we prove Theorem~\ref{rank-one thm} by introducing  curl-type operators $\mathscr A^s$ and then computing their hypoelliptic wave cone. Eventually, the Appendices contain the proof of the Besicovitch-type differentiation theorem, Theorem~\ref{weak besicovitch thm Carnot}, and of the existence of tangent (in an intrinsic sense) measures, Proposition~\ref{prop existence of tangent measures}.\medskip
    
    {\bf Acknowledgments.}
    The authors wish to thank Gian Maria Dall'Ara for the fruitful discussions and directions regarding the Harmonic Analysis literature on Homogeneous Spaces. \\
    The authors have been supported by the University of Padova and they are members of the Istituto Nazionale di Alta Matematica (INdAM), Gruppo Nazionale per l'Analisi Matematica, la Probabilità e le loro Applicazioni (GNAMPA). 
    G.D.P.’s research is funded by the European Research Council (ERC) through CoG 101169953 ``RISE''\footnote{Views and opinions expressed are however those of the authors only and do not necessarily reflect those of the European Union or the European Research Council.}. 
    A.G.  acknowledges  support from the Scuola Normale Superiore as well as from the Scuola Galileiana di Studi Superiori, where part of this work was carried out during his time as a student. 
    A.M. and D.V. received funding through the INdAM project {\em VAC\&GMT}. 
    D.V. was also supported by the INdAM-GNAMPA 2026 Project \emph{Variational, Geometric, and Analytic Perspectives on Regularity}, CUP E53C25002010001.\medskip

     {\bf Statement of AI use.}
    The authors used AI tools for minor copyediting assistance during the final revision of this paper. 
    The manuscript and all mathematical content were conceived and written entirely by the authors.

	\section{Preliminaries in Carnot groups}\label{sec_Carnot}
	\subsection{Carnot groups}\label{subsec_Carnot}
	A Carnot (or {\em stratified})  group $\GG$ (see e.g.~\cite{LeDonne_libro}) is a connected, simply connected and nilpotent Lie group whose Lie algebra $\g$ is {\em stratified}, i.e., it possesses a decomposition  $\g=\g_1\oplus\dots\oplus\g_s$ such that
	\[
	\forall\ j=1,\dots,s-1\quad \g_{j+1}=[\g_j,\g_1],\qquad\g_s\neq\{0\}\qquad\text{and}\qquad [\g_s,\g]=\{0\}.
	\]
	We refer to the integer $s$ as the {\em step} of $\GG$ and to $\rho:=\,$dim $\g_1$ as its {\em rank}; we also denote by $d$ the topological dimension of $\GG$. The group identity is denoted by $0$ and, as customary, we identify $\g$, $T_0\GG$ and the algebra of left-invariant vector fields on $\GG$. The elements of $\g_1$ are referred to as {\em horizontal}.
	
	The exponential map $\exp:\g\to\GG$ is a diffeomorphism and, given a basis $X_1,\dots,X_d$ of $\g$, we will often identify $\GG$ with $\RR^d$ by means of exponential coordinates:
	\[
	\RR^d\ni x=(x_1,\dots,x_d)\longleftrightarrow \exp\left( x_1X_1+\dots+x_dX_d\right)\in\GG.
	\]
	We will say that a basis is \emph{adapted} to the stratification if
	\begin{align*}
		&  X_1,\dots,X_\rho \text{ is a basis of $\g_1$, and}\\
		& \forall\ j=2,\dots,s,\ X_{\dim (\g_1\oplus\cdots\oplus\g_{j-1})+1},\dots, X_{\dim (\g_1\oplus\cdots\oplus\g_{j})}\text{ is a basis of $\g_j$.} 
	\end{align*}
	A one-parameter family $\{\delta_\lambda\}_{\lambda>0}$ of {\em dilations} $\delta_\lambda:\g\to\g$  is defined by (linearly extending)  
	\[
	\delta_\lambda(X):=\lambda^j X \text{ for any } X\in\g_j;
	\]
	notice that dilations are Lie algebra homomorphisms and $\delta_{\lambda\mu}=\delta_\lambda\circ\delta_\mu$. By composition with $\exp$ one can then define a one-parameter family, for which we use the same symbol, of group isomorphisms $\delta_\lambda:\GG\to\GG$.
	
	We fix a left-invariant homogeneous  distance $d$ on $\GG$ (for instance the \emph{Carnot-Carathéodory} distance, see e.g.~\cite{LeDonne_libro}), i.e., a distance such that
	\[
	d(xy,xz)=d(y,z), \quad d(\delta_\lambda x,\delta_\lambda y)=\lambda d(x,y)\qquad\text{for all }x,y,z\in\GG,\lambda>0.
	\]
	We use $d$ to denote both the distance on $\GG$ and its topological dimension, but no confusion  will ever arise. We denote by $B^\GG(x,r)$ the open ball of center $x\in\GG$ and radius $r>0$ and by $B^{\RR^d}(x,r)$ the open Euclidean ball of center $x$ and radius $r$. We have the following well-known fact (see e.g.~\cite[Corollary~4.3.4]{LeDonne_libro}).
	\begin{corollary}
		\label{estimate d_c}
		For every compact set $K\subset\GG$ there exists a constant $C=C(K)>0$ such that, for every $x\in K$ and every $r\in(0,1)$, the inclusion $B^{\RR^d}(x,r^s)\subset B^\GG(x,C r)$ holds.
	\end{corollary}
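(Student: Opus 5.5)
The plan is to prove the equivalent statement that if $y\in B^{\RR^d}(x,r^s)$ with $x\in K$ and $r\in(0,1)$, then $d(x,y)<Cr$. By left-invariance, $d(x,y)=d(0,x^{-1}y)$. So it suffices to control the homogeneous norm $\|x^{-1}y\|:=d(0,x^{-1}y)$ in terms of the Euclidean quantity $|y-x|$, uniformly for $x\in K$.

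\emph{Step 1: the group difference.} In exponential coordinates the group law is polynomial (Baker--Campbell--Hausdorff), and $x^{-1}x=0$. Hence $z:=x^{-1}y$ is a polynomial map of $(x,y)$ that vanishes when $y=x$. Writing $y=x+h$, each coordinate $z_i$ is a polynomial in $(x,h)$ vanishing at $h=0$. For $x\in K$ and $|h|\le 1$ we therefore get $|z_i|\le C_1|h|$, with $C_1$ depending only on $K$. The bound follows from the mean value theorem applied on the compact set $\{(x,x+h):x\in K,\ |h|\le1\}$.

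\emph{Step 2: comparison of the homogeneous norm with coordinates.} Let $\deg i=j$ whenever $X_i\in\g_j$ (adapted basis). The function $N(z):=\sum_i |z_i|^{1/\deg i}$ is continuous, positive away from $0$, and $\delta_\lambda$-homogeneous of degree one. The same holds for $z\mapsto d(0,z)$, which is continuous because the homogeneous distance induces the manifold topology. A standard compactness argument on the unit sphere of $N$, followed by dilation, then gives $d(0,z)\le C_2N(z)$ for all $z$.

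\emph{Step 3: conclusion.} Combining the two steps with $|h|<r^s\le 1$ and $\deg i\le s$ gives
\[
d(x,y)\le C_2\sum_i (C_1|h|)^{1/\deg i}\le C_2\sum_i (C_1 r^s)^{1/\deg i}\le C_2\, d\,\max(1,C_1)\, r,
\]
where we use $r^{s/\deg i}\le r$ since $r<1$, and $(C_1)^{1/\deg i}\le \max(1,C_1)$. Taking $C$ slightly larger than this constant makes the inequality strict.

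The main obstacle is Step 2, specifically the continuity of $d$ with respect to the Euclidean topology. If $d$ is the Carnot--Carathéodory distance, this is Chow's theorem (or the ball--box theorem). For a general homogeneous left-invariant distance, this continuity is exactly the part of the definition, or of \cite{LeDonne_libro}, that we must invoke.
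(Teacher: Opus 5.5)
Your proof is correct. The paper itself gives no argument for this statement; it quotes it as a well-known fact from \cite[Corollary~4.3.4]{LeDonne_libro}. Your argument is the standard proof behind that citation, built from two ingredients:
\begin{itemize}
\item the bound $|(x^{-1}y)_i|\le C_1|y-x|$ for $x\in K$ and $|y-x|\le 1$, which comes from the polynomial group law in exponential coordinates together with $x^{-1}x=0$;
\item the global comparison $d(0,z)\le C_2\sum_i|z_i|^{1/\deg i}$, which comes from compactness of $\{N=1\}$ and the dilations $\delta_\lambda$. These dilations are diagonal precisely because you work in an adapted basis.
\end{itemize}
Together they give the more usual H\"older form $d(x,y)\le C|x-y|^{1/s}$ for $x\in K$ and $|x-y|\le 1$. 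The inclusion then follows exactly as in your Step~3.

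The citation is shorter and covers an arbitrary homogeneous distance without further comment. Your route is self-contained and isolates the only non-algebraic input, namely the Euclidean continuity of $d$.

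On that input you can be slightly more economical. Step~2 only uses that $d(0,\cdot)$ is bounded on the compact set $\{N=1\}$. By homogeneity, this is equivalent to continuity of $d(0,\cdot)$ at the identity. That in turn gives Euclidean continuity of $d$ everywhere, via $|d(0,y)-d(0,y')|\le d(y,y')=d(0,y^{-1}y')$. For the Carnot--Carath\'eodory distance this is Chow's theorem, as you say. For a general homogeneous distance it is the standard fact that such distances induce the manifold topology.
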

    Recall that $\Leb[d]$ is a Haar measure on $\GG\equiv\RR^d$ and that the homogeneous dimension of $\GG$ is the integer $Q:=\sum_{j=1}^s j\dim\g_j$. One has
    \[
    \Leb[d] (B^\GG(x,r)) = r^Q \Leb[d] (B^\GG(0,1))\qquad\text{for all }x\in\GG,r>0.
    \]
    The number $Q$ is always greater than $d$ (apart from the Euclidean case $s=1$) and it coincides with the Hausdorff dimension of $\GG$. Since also the Hausdorff $Q$-dimensional measure is a Haar measure, it coincides with $\Leb[d]$ up to a multiplicative constant.  
    
    Following \cite[Section 1.B]{Folland-Stein} we also recall the definition of convolution on groups: given two measurable functions $f,g$ on $\GG$, we define
    \[
    f*g(x) := \int_\GG f(xy^{-1})g(y) \ d\Leb[d](y) = \int_\GG f(y)g(y^{-1}x) \ d\Leb[d](y)
    \]
    whenever the integrals converge. We have the following well-known fact: take any $X \in \g$ left-invariant vector field on $\GG$ and define $X^R$ its associated \emph{right-invariant} vector field, defined via $X^R\vert_0 = X\vert_0$. Then
    \[
    X(f*g) = f*(Xg), \quad X^R(f*g) = (X^Rf)*g, \quad (Xf)*g = f*(X^Rg).
    \]
    
    
    
    \subsection{Heisenberg groups}
    \label{subsec_heisenberg}
    For every $n\geq 1$, the $n$-th Heisenberg group $\HH^n$ is the Carnot group of dimension $d=2n+1$, step $s=2$ and rank $\rho=2n$ associated with the $(2n+1)$-dimensional stratified Lie algebra $\h$ generated by elements $X_1,\dots,X_n,Y_1,\dots,Y_n,T$ whose Lie brackets vanish except for 
    \[
    [X_j,Y_j]=T\qquad\text{for every }j=1,\dots,n.
    \]
    The algebra stratification is given by $\h=\h_1\oplus\h_2$, where
    \[
    \text{$\h_1:=\Span\{X_1,\dots,X_n,Y_1,\dots,Y_n\},\qquad \h_2:=\Span\{T\}$;}
    \]
    it follows that the second layer $\h_2$ is the center of the algebra.
    
    We identify $\HH^n$ with $\RR^{2n+1}=\RR^n\times\RR^n\times\RR$ by means of exponential coordinates
    \[
    \HH^n\ni\exp(x_1X_1+\dots+x_nX_n+y_1Y_1+\dots+y_nY_n+tT)\longleftrightarrow(x,y,t)\in\RR^{2n+1}.
    \]
In exponential coordinates, the group law reads as
    \[
    (x,y,t)(x',y',t')=(x+x',y+y',t+t'+ \tfrac12 \langle x,y'\rangle-\tfrac12\langle x',y\rangle)
    \]
    and left-invariant vector fields have the form
    \[
    X_j=\partial_{x_j}-\frac{y_j}2 \partial_t,\qquad Y_j=\partial_{y_j}+\frac{x_j}2\partial_t,\qquad T=\partial_t.
    \]
    In this case, a one-parameter family $(\delta_\lambda)_{\lambda>0}$ of group automorphisms is provided by the dilations $\delta_\lambda(x,y,t):=(\lambda x,\lambda y,\lambda^2 t)$.

    As customary, when $n=1$ we write for simplicity $x,y,X,Y$ in place of, respectively, $x_1,y_1,X_1,Y_1$. 

    \section{Representations and hypoellipticity}\label{sec_reprhypo}
    
    In this section we introduce a few standard definitions and results in Representation Theory in order to define Rockland operators through Theorem \ref{thm_rockland}, which characterizes hypoellipticity in graded Lie groups (see e.g. \cite[§3.1]{Fischer-Ruzhansky}), and consequently present some properties of Bessel operators. For more detail, we refer to \cite{Fischer-Ruzhansky}.

    \subsection{Lie group representations and Rockland operators}
    
    \begin{definition} [Lie group representations]
        A \emph{representation} of a Lie Group $\GG$ is an action of $\GG$ on a vector space $V$, i.e. a smooth group homomorphism $\Pi : \GG \rightarrow GL(V)$ where $GL(V)$ is the general linear group on $V$. We also say that $\Pi$ is \emph{irreducible} whenever its only invariant subspaces are $V$ and the zero space, where $W \leq V$ is said to be invariant if $\Pi(g)(W) \subseteq W$ for any $g \in \GG$. Moreover, we say that $\Pi$ is a \emph{unitary representation} if $V$ is an Hilbert space and $\Pi(\GG)$ is mapped into the space of unitary operators\footnote{Recall that an unitary operator on an Hilbert space $H$ is a bounded linear operator $U : H \rightarrow H$ such that $U^*U = UU^* = \operatorname{Id}_H$.} on $V$. \\
        We say that two representations $\Pi_1, \Pi_2$ on (respectively) $V_1, V_2$  are equivalent if there exists a bounded invertible linear map $A: V_1 \rightarrow V_2$ such that $A \Pi_1 = \Pi_2 A$. We will denote by $\widehat{\GG}$ the ``dual'' space of equivalence classes of irreducible unitary representations of $\GG$.
    \end{definition}
    
    \begin{example} [\Schrodinger \ representation] \label{ex_schrodinger}
        Take a non-zero real number $\hbar$ and an Heisenberg group $\HH^n$ as in §\ref{subsec_heisenberg}. Then we may define an irreducible unitary representation $\Pi_\hbar$ of $\HH^n$, known as the \emph{\Schrodinger \ representation of $\HH^n$}, acting on the Hilbert space $L^2(\RR^n;\CC)$ via
        \begin{equation}
            \label{schrodinger representation}
            \big[ \Pi_\hbar(x,y,t) \psi \big](z) := e^{i\hbar(y \cdot z + t + \frac{x \cdot y}{2})} \psi(z+x)
        \end{equation}
        for all $(x,y,t) \in \RR^n \times \RR^n \times \RR \equiv \ \HH^n$ (exponential coordinates are used here), $\psi \in L^2(\RR^n; \CC)$ and $z \in \RR^n$.
    \end{example}
    
    Following \cite[§2]{Rockland}, any given unitary representation $\Pi$ of a Lie group $\GG$ onto some Hilbert space $H$ induces a Lie algebra representation\footnote{With a slight abuse of notation we use the same symbol for both.} $\Pi$ of $\g$ onto the vector subspace $H_\infty \subseteq H$ of $C^\infty$-vectors, i.e. the space of vectors $v \in H$ such that the map $x \mapsto \Pi(x)v$ from $\GG$ to $H$ is smooth. More precisely, we define
    \[
          \Pi(Z)v := \left.\frac{d}{dt}\right\vert_{t=0} \Pi(\exp tZ)v \qquad \text{for } Z \in \g, v \in H_\infty,
    \]
    which gives the representation $\Pi : \g \rightarrow L(H_\infty)$. This extends uniquely to a representation of the algebra of left-invariant differential operators on $\GG$, since any element $P$ of said algebra can be uniquely written as
    \[
          P = \sum_{|\alpha|\leq m} b_\alpha X_1^{\alpha_1} \cdots X_n^{\alpha_n},
    \]
    where $X_1, \ldots, X_n$ form an ordered basis for $\g$.
    
    If $\Pi$ is irreducible, it may be shown that there is a unitary equivalence taking $H$ to $L^2(\RR^n;\CC)$ for some $n \in \NN$ and taking $H_\infty$ to  the space $\mathscr{S}(\RR^n;\CC)$ of Schwartz functions on $\RR^n$.

    \begin{example}
        Following Example \ref{ex_schrodinger} above, one can easily check that the standard vector fields $X_1, \ldots, X_n, Y_1, \ldots, Y_n, T$ introduced in \ref{subsec_heisenberg}, act as follows when represented via \Schrodinger:
        \[
            \Pi_h [X_j] \psi = \frac{\partial \psi}{\partial z_j}, \quad \Pi_h [Y_j] \psi = ihz_j \cdot \psi, \quad \Pi_h [T] \psi = ih \cdot \psi,
        \]
        for $\psi \in \mathscr{S}(\RR^n;\CC)$. Note that here $i=\sqrt{-1}$.
    \end{example}
    
    In order to give some context to the following discussion, we state this result due to \Hormander:
    
    \begin{theorem} (\cite[Theorem 19.5.1]{HormanderIII})
        \label{thm Hormander}
        Let $D$ be a linear differential operator of order $k$ defined on the space of smooth functions on a smooth manifold $M$. Then the following are equivalent
        \begin{enumerate}[label=(\alph*)]
            \item The operator $D$ is elliptic, i.e. for any $\xi \in T^*M\backslash\{0\}$ the principal symbol\footnote{The principal symbol is the symbol of the $k$-homogeneous part of $D$} of $D$ at $\xi$ never vanishes.
            \item For every (or for some) $s \in \RR$ and for every distribution $u$ on $M$ we have the following: if the distributional derivative $Du$ belongs to the the local $s$-Sobolev $L^2$ space $H^s(M) = W^{s,2}(M)$, then $u \in H^{s+k}(M)$.
        \end{enumerate}
    \end{theorem}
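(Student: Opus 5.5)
The plan is to prove the two implications separately: (a)$\Rightarrow$(b) by a pseudodifferential parametrix, and (b)$\Rightarrow$(a) by turning the regularity statement into an a priori estimate and testing it on wave packets concentrated at a characteristic covector. Everything is local, so I work in a coordinate chart $U\subset\RR^n$ and localize with cutoffs $\chi\in C^\infty_c(U)$. The commutator $[D,\chi]$ has order $k-1$, so it only costs a lower-order term, which is absorbed by iteration.

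\emph{(a)$\Rightarrow$(b).} Let $p_k(x,\xi)$ be the principal symbol, so $p_k(x,\xi)\neq0$ for $\xi\neq0$.
\begin{enumerate}
\item Using homogeneity, set $q_{-k}(x,\xi):=\theta(\xi)/p_k(x,\xi)$, with $\theta$ vanishing near $0$ and equal to $1$ for $|\xi|\ge1$. This is a symbol of order $-k$ on compact subsets of $U$.
\item By the symbol calculus, $\mathrm{Op}(q_{-k})D=I-R_1$ with $R_1$ of order $-1$. Solving the usual recursive equations for $q_{-k-j}$ and Borel-summing gives a properly supported parametrix $E$ of order $-k$ with $ED=I+R$, where $R$ is smoothing.
\item Take $u\in\mathcal D'(M)$ with $Du\in H^s_{\loc}$. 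Then $\chi u=E D(\chi u)-R(\chi u)$. Here $D(\chi u)=\chi Du+[D,\chi]u$, and $E$ maps $H^s_{\mathrm{comp}}$ into $H^{s+k}_{\loc}$.
\item If $u\in H^t_{\loc}$, this gives $\chi u\in H^{\min(s+k,\,t+1)}$. Since $u$ has finite order on compacts, iterating finitely many times gives $u\in H^{s+k}_{\loc}$.
\end{enumerate}
This direction is standard; the only bookkeeping is the iteration.

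\emph{(b)$\Rightarrow$(a).} Assume (b) holds for some fixed $s$, and suppose for contradiction that $p_k(x_0,\xi_0)=0$ with $|\xi_0|=1$. Fix a compact $K\subset U$ containing $x_0$ in its interior and a very negative $t$.
\begin{enumerate}
\item Consider $\mathcal X:=\{u\in H^t\,:\,\supp u\subset K,\ Du\in H^s\}$ with the graph norm; it is a Banach space.
\item By (b), $\mathcal X\subset H^{s+k}$. The inclusion has closed graph, so the closed graph theorem gives
\[
\|u\|_{H^{s+k}}\le C\big(\|Du\|_{H^s}+\|u\|_{H^t}\big)\qquad\text{for all }u\in\mathcal X .
\]
\item Test this on wave packets $u_\lambda(x):=\varphi\big(\lambda^{1/2}(x-x_0)\big)e^{i\lambda x\cdot\xi_0}$, with $\varphi\in C^\infty_c$ and $\lambda\to\infty$.
\item The packet has frequency $\approx\lambda\xi_0$, spread $\lambda^{1/2}$, so $\|u_\lambda\|_{H^{s+k}}\approx\lambda^{s+k}\|u_\lambda\|_{L^2}$. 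It also gives $\|u_\lambda\|_{H^t}\lesssim\lambda^{t}\|u_\lambda\|_{L^2}$.
\item Expand $Du_\lambda=e^{i\lambda x\cdot\xi_0}\big(\lambda^k p_k(x,\xi_0)\varphi_\lambda+O(\lambda^{k-1/2})\big)$, where $\varphi_\lambda=\varphi(\lambda^{1/2}(\cdot-x_0))$. On $\supp\varphi_\lambda$ we have $|x-x_0|\lesssim\lambda^{-1/2}$, hence $|p_k(x,\xi_0)|\lesssim\lambda^{-1/2}$. Therefore $\|Du_\lambda\|_{H^s}\lesssim\lambda^{s+k-1/2}\|u_\lambda\|_{L^2}$.
\item Plugging into the estimate gives $\lambda^{s+k}\lesssim\lambda^{s+k-1/2}+\lambda^{t}$, which is impossible as $\lambda\to\infty$. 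Hence $p_k$ never vanishes off the zero section.
\end{enumerate}
This argument uses only one $s$, which settles the ``for some'' version. The ``for every'' version then follows from (a)$\Rightarrow$(b).

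I expect the main obstacle to be the Sobolev norm computations for the wave packets in steps 4--5. One must check that differentiating the envelope contributes only $O(\lambda^{k-1/2})$ relative terms. One must also check that the $H^\sigma$ norms are comparable to $\lambda^\sigma\|u_\lambda\|_{L^2}$, which holds because the Fourier transform of $u_\lambda$ is concentrated in a ball of radius $\lambda^{1/2}$ around $\lambda\xi_0$. The closed graph step also needs care: one must verify that $\mathcal X$ is complete and that the inclusion into $H^{s+k}$ has closed graph, using continuity of $D$ on distributions.
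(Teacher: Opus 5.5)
The paper gives no proof of this statement. It is quoted from H\"ormander's treatise only as background motivating hypoellipticity, and nothing later in the paper depends on how it is proved, so there is no argument of the paper to compare yours with.

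Your proof is the standard one and it is correct. The properly supported parametrix with the cutoff-and-bootstrap bookkeeping gives (a)$\Rightarrow$(b) for every $s$. For the converse, the closed-graph a priori estimate, tested on packets concentrated at scale $\lambda^{-1/2}$ around a characteristic covector, gives ``(b) for one $s$'' $\Rightarrow$ (a). Together with the trivial implication from ``every $s$'' to ``some $s$'', this closes the cycle.

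The one step that needs more than you wrote is step 5. The pointwise bound $|p_k(x,\xi_0)|\lesssim\lambda^{-1/2}$ on the support only controls the $L^2$ norm, not the $H^s$ norm for $s\neq0$. To fix this, use $p_k(x_0,\xi_0)=0$ to see that $g_\lambda(y):=\lambda^{1/2}p_k(x_0+\lambda^{-1/2}y,\xi_0)\,\varphi(y)$ stays in a bounded subset of $C^\infty_c$ as $\lambda\to\infty$. The same holds for the functions $a_\alpha(x_0+\lambda^{-1/2}y)\,\partial^\beta\varphi(y)$ coming from the remaining terms. Hence every term of $Du_\lambda$ is $\lambda^{k-1/2}$ (or less) times a packet $e^{i\lambda x\cdot\xi_0}g(\lambda^{1/2}(x-x_0))$ with $g$ in a fixed bounded set. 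The Schwartz decay of $\hat g$, uniform over that set, gives
\[
\|e^{i\lambda x\cdot\xi_0}g(\lambda^{1/2}(\cdot-x_0))\|_{H^\sigma}\lesssim\lambda^{\sigma-n/4}\quad\text{for every }\sigma\in\RR,
\]
which yields $\|Du_\lambda\|_{H^s}\lesssim\lambda^{s+k-1/2}\|u_\lambda\|_{L^2}$, since $\|u_\lambda\|_{L^2}=\lambda^{-n/4}\|\varphi\|_{L^2}$. The matching lower bound $\|u_\lambda\|_{H^{s+k}}\gtrsim\lambda^{s+k}\|u_\lambda\|_{L^2}$ comes from the region $|\xi-\lambda\xi_0|\le\lambda/2$, where $|\xi|\approx\lambda$, and holds for either sign of $s+k$.
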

    
    We say that an operator is \emph{hypoelliptic} whenever solving for $u$ yields regularity, similar to $(b)$ above:
    \begin{definition}[Hypoellipticity]
        A differential operator $P$ on a manifold $M$ is hypoelliptic if, for any distribution $u$ on an open subset $\Omega \subseteq M$, $Pu \in C^\infty(\Omega)$ implies that $u \in C^\infty(\Omega)$.
    \end{definition}
    
    In the celebrated paper~\cite{Hormander67}, \Hormander \ showed the existence of hypoelliptic operators which are not elliptic:
    
    \begin{theorem}\label{thm_Hormanderdacitare} (\Hormander, \cite{Hormander67})
        Let $X_0,X_1, \ldots, X_{\rho}$ be a family of vector fields on $M$ satisfying \Hormander's condition (i.e. which bracket-generate $T_xM$ at any point $x \in M$). Then $D = X_0+\sum_{i=1}^\rho X_i^2 $ is hypoelliptic.
    \end{theorem}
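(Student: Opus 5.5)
The plan is to prove the hypoellipticity of $D=X_0+\sum_{i=1}^\rho X_i^2$ through a subelliptic estimate. We fix a relatively compact open set $\Omega'\Subset M$. It is enough to find $\varepsilon>0$ and, for every $s\in\RR$, a constant $C_s$ with
\[
\norm[u][H^{s+\varepsilon}]\le C_s\bigl(\norm[Du][H^{s}]+\norm[u][H^{s}]\bigr)\qquad\text{for all }u\in C^\infty_c(\Omega').
\]
By finiteness of the bracket-generating condition on $\overline{\Omega'}$, we may fix a step $r$ such that commutators of length at most $r$ span $T_xM$ there. We will work in local coordinates and use Sobolev norms defined through the pseudodifferential operators $\Lambda^s=(1-\Delta)^{s/2}$.

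\textbf{Step 1 (basic energy estimate).} Write $D=X_0-\sum_i X_i^*X_i+(\text{first order})$, replacing each $X_i^2$ by $-X_i^*X_i+\text{lower order}$. Pairing $Du$ with $u$ and using that $\mathrm{Re}\langle X_0u,u\rangle$ is bounded by $C\norm[u][L^2]^2$ (since $X_0+X_0^*$ has order zero) gives
\[
\sum_i\norm[X_iu][L^2]^2\lesssim |\langle Du,u\rangle|+\norm[u][L^2]^2 .
\]

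\textbf{Step 2 (gain along brackets).} Introduce the seminorm $|||u|||_\varepsilon$, which controls $\norm[Yu][H^{\varepsilon-1}]$ for each vector field $Y$. Show by induction on bracket length that each commutator field of length $j\le r$ satisfies $\norm[Yu][H^{\varepsilon_j-1}]\lesssim \norm[Du][L^2]+\norm[u][L^2]$ with $\varepsilon_j=2^{1-j}$. The inductive step rests on the commutator identity
\[
\norm[[X_i,Y]u][H^{\delta/2-1}]^2=\langle [X_i,Y]u,\Lambda^{\delta-2}[X_i,Y]u\rangle .
\]
Expanding the commutator and moving $X_i$ onto the other factor via adjoints, we bound everything by $\norm[X_iu]$ times $\norm[Yu][H^{\delta-1}]$ plus harmless commutator terms $[X_i,\Lambda^{\delta-2}]$, which have order $\delta-2$. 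For the drift $X_0$ we argue differently. We cannot bound $X_0u$ directly, so we write $X_0=D+\sum X_i^*X_i+\dots$ inside the pairing and integrate by parts, which gives the weaker gain $\varepsilon/2$. Spanning then yields $\norm[u][H^{\varepsilon}]\lesssim\sum_Y\norm[Yu][H^{\varepsilon-1}]+\norm[u][L^2]$ with $\varepsilon=2^{1-r}$.

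\textbf{Step 3 (shift to all $s$ and bootstrap).} Apply the estimate to $\Lambda^s\phi u$ for cutoffs $\phi$. The commutators $[D,\Lambda^s\phi]$ are controlled by nested cutoffs and, again, the Step 1 estimate. To pass from $u\in\mathcal D'$ to smooth $u$, we use Friedrichs mollifiers $J_\eta$, whose commutators with $X_i$ are uniformly bounded. If $Du\in C^\infty$ and $u\in H^{t}_{\loc}$, the estimate applied to $J_\eta\phi u$ gives uniform bounds in $H^{t+\varepsilon}_{\loc}$. Iterating this gain shows that $u\in H^{t}_{\loc}$ for every $t$, and Sobolev embedding then gives $u\in C^\infty$.

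I expect the main obstacle to be Step 2, in particular the drift term $X_0$: it enters with only one derivative and no coercive sign, so the brackets involving $X_0$ need the delicate integration by parts. Controlling the commutator errors uniformly in the Sobolev index is technical but standard.
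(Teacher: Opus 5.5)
The paper does not prove this statement. It is quoted from H\"ormander's 1967 paper and used only in Remark~\ref{rem_altrecondnecinH^2}, for a pure sum of squares ($X_0=0$). Your route is Kohn's subelliptic-estimate proof rather than H\"ormander's original one, which works with $L^2$ difference quotients along the flows of the fields. For $X_0=0$ your sketch is sound up to a regularization detail noted at the end: Step 1, the bracket lemma for $[X_i,Y]$ with $i\ge1$, the spanning step and the bootstrap are the standard ingredients.

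With a drift, however, there is a genuine gap in the inductive step for brackets $Z=[X_0,Y]$ with $X_0$ on the outside. These brackets cannot be avoided. For $\partial_x^2+x\partial_y+y\partial_z$ on $\RR^3$, at points with $y=0$ one needs $\partial_z=-[X_0,[X_1,X_0]]$, while $X_1$, $X_0$ and iterated brackets $[X_1,\cdot\,]$ only span $\partial_x,\partial_y$ there. Your one-line recipe does not close this step. Take $T=\Lambda^{2\delta-2}Z$, substitute $X_0=D-\sum_jX_j^2-c$ in both $\langle X_0Yu,Tu\rangle$ and $\langle YX_0u,Tu\rangle$, and integrate by parts. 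If $2\delta\le\varepsilon/2$, every resulting term is controlled by $G=\|Du\|+\|u\|$, $\|X_ju\|$, $\|Yu\|_{H^{\varepsilon-1}}$ and $\|[X_j,Y]u\|_{H^{\varepsilon/2-1}}$, except the residual $-2\sum_j\langle YX_ju,\Lambda^{2\delta-2}ZX_ju\rangle$. This is a quadratic form of positive order $2\delta$ in $X_ju$, whereas you only know $X_ju\in L^2$. Pairing $\|X_0u\|_{H^{-1/2}}$ directly against $Yu$ instead would need $\varepsilon\ge2\delta+\tfrac12$, which fails once $\varepsilon\le\tfrac12$.

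An extra idea is needed; here is one way to close the step.
\begin{itemize}
\item Take $\delta=\varepsilon/4$, so that the residual is at most $\|YX_ju\|_{H^{\varepsilon/2-1}}\|X_ju\|$.
\item Bound $X_j\Lambda^{\varepsilon/2-1}Yu$ by applying Step 1 to a localization of $\Lambda^{\varepsilon/2-1}Yu$.
\item The commutator of $X_0$ with $\Lambda^{\varepsilon/2-1}Y$ reintroduces $Z$, but only through $\langle\Lambda^{\delta-1}Zu,\Lambda^{\varepsilon-\delta-1}Yu\rangle\le\|Zu\|_{H^{\delta-1}}\|Yu\|_{H^{\varepsilon-1}}$.
\item This gives $\|Zu\|_{H^{\delta-1}}^2\lesssim G^2+G^{3/2}\|Zu\|_{H^{\delta-1}}^{1/2}$, and the last term can be absorbed.
\end{itemize}
So drift brackets reduce the gain from $\varepsilon$ to $\varepsilon/4$, and $X_0$ itself is only $\tfrac12$-good. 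Your stated exponent $2^{1-r}$ is therefore wrong once the drift enters, although any positive exponent suffices for hypoellipticity.

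A smaller point concerns Step 3. $\|DJ_\eta\phi u\|_{H^t}$ is not uniformly bounded when $u\in H^t_{\mathrm{loc}}$, because $[X_i^2,J_\eta\phi]$ is uniformly of order one. You have to keep this commutator as $X_i[X_i,J_\eta\phi]+[X_i,J_\eta\phi]X_i$ inside the pairing and absorb it via Step 1. Equivalently, the estimate you regularize should also carry $\sum_i\|X_iv\|_{H^t}$ on the left-hand side.
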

    
    We shall be concerned with the hypoellipticity of left-invariant operators on graded Lie groups (in particular, Carnot Groups). A full characterization was given by Rockland \cite[Theorem 1.2]{Rockland} for Heisenberg groups, later generalized by Helffer and Nourrigat \cite[Théorème 0.1]{Helffer-Nourrigat}:
    \begin{theorem}
        \label{thm_rockland}
        Let $P$ be a left-invariant homogeneous differential operator on a graded nilpotent, connected and simply connected Lie group $\GG$. Then the following are equivalent:
        \begin{enumerate}[label=(\alph*)]
            \item $P$ is hypoelliptic
            \item For every irreducible unitary representation\footnote{Except the trivial identity representation, i.e. the degenerate representation that represents all elements of $\g$ as the zero operator on $C^\infty$-vectors.} $\Pi \in \widehat{\GG}$, $\Pi(P)$ is injective, i.e. $\Pi(P)v \not= 0$ for any non-zero $C^\infty$-vector $v$ of the representation $\Pi$.
        \end{enumerate}
    \end{theorem}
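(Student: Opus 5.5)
The statement is the Rockland criterion in the generality proved by Helffer and Nourrigat. I would treat the two implications separately, since they are of very different difficulty. Throughout, $P$ is homogeneous of degree $\nu$ with respect to $\{\delta_\lambda\}$, i.e.\ $P(f\circ\delta_\lambda)=\lambda^\nu (Pf)\circ\delta_\lambda$.

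\emph{(a)$\Rightarrow$(b).} Suppose $\Pi\in\widehat\GG$ is non-trivial and $v\neq0$ is a $C^\infty$-vector with $\Pi(P)v=0$. For any $w\in H$, consider the matrix coefficient $u(x):=\langle \Pi(x)v,w\rangle$.
\begin{enumerate}[label=(\roman*)]
\item For $Z\in\g$ one has $Zu(x)=\langle\Pi(x)\Pi(Z)v,w\rangle$, and hence $Pu(x)=\langle \Pi(x)\Pi(P)v,w\rangle=0$. Moreover $u$ is smooth and $|u|\le\|v\|\|w\|$, because $\Pi$ is unitary.
\item From hypoellipticity I would extract a local a priori estimate, via the closed graph theorem applied to the Fréchet space of distributions $f$ on $B^\GG(0,2)$ with $Pf$ smooth:
\[
\sup_{B^\GG(0,1)}|X_jf|\le C\sup_{B^\GG(0,2)}|f|\qquad\text{whenever }Pf=0\text{ on }B^\GG(0,2).
\]
\item By homogeneity, each $u_\lambda:=u\circ\delta_\lambda$ also solves $Pu_\lambda=0$. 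Since $X_j(u\circ\delta_\lambda)=\lambda\,(X_ju)\circ\delta_\lambda$, the estimate gives $\sup_{B^\GG(0,\lambda)}|X_ju|\le C\lambda^{-1}\|v\|\|w\|$.
\item Letting $\lambda\to\infty$ yields $X_ju\equiv0$ for every horizontal $X_j$. Since $\g_1$ generates $\g$, $u$ is constant.
\end{enumerate}
Evaluating at the identity, this gives $\langle\Pi(\exp tZ)v,w\rangle=\langle v,w\rangle$ for all $w$, so $v$ is a fixed vector of $\Pi$. By irreducibility $\mathbb C v=H$, so $\Pi$ is one-dimensional with trivial action, i.e.\ $\Pi$ is the trivial representation, which is excluded.

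\emph{(b)$\Rightarrow$(a).} This is the main obstacle. I would proceed as follows.
\begin{enumerate}[label=(\roman*)]
\item Replace $P$ by $L:=P^*P$ (or $(P^*P)^N$), which is homogeneous, formally self-adjoint and nonnegative. It still satisfies (b), because $\Pi(P^*P)v=0$ implies $\|\Pi(P)v\|^2=0$.
\item The core step is the global a priori estimate
\[
\sum_{[\alpha]=\nu}\|X^\alpha f\|_{L^2(\GG)}\le C\big(\|Pf\|_{L^2(\GG)}+\|f\|_{L^2(\GG)}\big)\qquad\text{for } f\in C_c^\infty(\GG),
\]
where $X^\alpha$ ranges over the homogeneous monomials of degree $\nu$.
\item I would prove this estimate by induction on $\dim\GG$, following Helffer--Nourrigat. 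First quotient by a one-dimensional central subgroup: representations trivial on the centre are representations of the quotient group, to which the inductive hypothesis applies. The remaining representations are parametrized by coadjoint orbits (Kirillov) with non-zero central character. Combine the Plancherel formula with homogeneity to reduce to a compact set of orbits, and obtain uniform lower bounds $\|\Pi(P)\phi\|\gtrsim\|\Pi(X^\alpha)\phi\|$ there. This uses injectivity from (b) together with a compactness/contradiction argument; the delicate part is controlling degenerations at the boundary of the orbit space.
\end{enumerate}
The main difficulty is exactly making that compactness argument uniform across orbits of different dimensions.

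Once the estimate is in hand, I would conclude with Folland's theory of homogeneous kernels. The estimate makes $L$ invertible on the homogeneous Sobolev scale and yields a fundamental solution $K$ homogeneous of degree $\nu-Q$ (with a logarithmic correction if $\nu\ge Q$). Next, $K$ is smooth away from $0$: on $\GG\setminus\{0\}$ one has $LK=0$, and the a priori estimate, applied to localizations of $K$ and iterated on derivatives, gives interior regularity. Convolution with $K$ then furnishes a parametrix for $L$, so $L$ is hypoelliptic. Finally, $Pu\in C^\infty$ implies $Lu=P^*Pu\in C^\infty$, hence $u\in C^\infty$, so $P$ is hypoelliptic.
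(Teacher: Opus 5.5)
Your proof of (a)$\Rightarrow$(b) is correct. It is the standard Liouville/scaling argument, and it is more than the paper gives: the paper proves neither direction and simply quotes Rockland and Helffer--Nourrigat. Two small repairs are needed.
\begin{itemize}
\item Apply the closed graph theorem to the Banach space $\{f\in L^\infty(B^\GG(0,2)) : Pf=0\}$, which hypoellipticity maps into $C^\infty(B^\GG(0,2))$. The space of ``distributions with $Pf$ smooth'' is not the right domain.
\item The statement concerns graded groups, where the degree-one layer need not generate $\g$. So run the scaling step on a homogeneous basis $Z_1,\dots,Z_d$ of $\g$ with weights $\nu_i>0$. This gives $\sup_{B^\GG(0,\lambda)}|Z_iu|\lesssim\lambda^{-\nu_i}$, hence $Z_iu\equiv0$ for every $i$.
\end{itemize}

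The genuine gap is in (b)$\Rightarrow$(a). The uniform estimate in your steps (ii)--(iii) is the whole content of the Helffer--Nourrigat theorem, and ``injectivity plus a compactness/contradiction argument'' does not produce it. Suppose $\|\Pi_n(P)\phi_n\|\to0$ while $\sum_\alpha\|\Pi_n(X^\alpha)\phi_n\|=1$. Nothing in your argument stops the $\phi_n$ from losing all their mass at infinity in the model space $L^2(\RR^k)$, even for a single fixed $\Pi$. Then the weak limit is $0$ and there is no contradiction with injectivity.

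The missing idea is a localization principle: such escaping sequences are governed by \emph{other} representations, namely those whose coadjoint orbits lie in the asymptotic dilation cone of the orbit of $\Pi$. These vanish on the central direction you quotient by, so they factor through the quotient group. This is the only place the induction hypothesis genuinely enters, and your outline does not contain it.

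The example of $\HH^1$ already shows this:
\begin{itemize}
\item $\Pi_1(P)$ is a polynomial-coefficient operator on $\RR$. Its principal symbol at infinity in phase space is $(\xi,\eta)\mapsto\omega_{\xi,\eta}(P)$, where $\omega_{\xi,\eta}$ is the character $X\mapsto i\xi$, $Y\mapsto i\eta$, $T\mapsto 0$.
\item Condition (b) applied to these characters gives global (Shubin-type) ellipticity. That yields an estimate modulo a compact term.
\item Only after that does injectivity of $\Pi_1(P)$ remove the compact term.
\end{itemize}

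Homogeneity also does not reduce you to a compact set of orbits. Take $\HH^1\times\RR$ with $T$ of degree $2$ and the $\RR$-factor of degree $1$, and quotient by $\exp(\RR T)$. The representations $\Pi_\hbar\otimes e_\sigma$ normalized by $|\hbar|=1$ still have $\sigma\in\RR$ free, and $\sigma\to\pm\infty$ is exactly such a degeneration.

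Your last step is incomplete as well:
\begin{itemize}
\item $L=P^*P$ has degree $2\nu$, so $K$ would be homogeneous of degree $2\nu-Q$, not $\nu-Q$.
\item Surjectivity of $L$ on the homogeneous Sobolev scale requires $\{g\in L^2 : Lg=0\}=\{0\}$. An estimate on $C^\infty_c$ does not give this unless you extend it to the maximal domain.
\item An a priori estimate on test functions does not make the distribution $K$ smooth away from $0$. You need a regularization scheme (the left mollification $\varphi_\varepsilon*K$ does commute with $L$) together with estimates at every Sobolev order to absorb $[L,\chi]$.
\end{itemize}

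As written, (b)$\Rightarrow$(a) amounts to citing Helffer--Nourrigat, which is what the paper does, but it is not a proof.
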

    
    These results also led to the definition of \emph{Rockland operators}, i.e. operators $P$ as above which satisfy condition $(b)$. Following \cite[Chapter 4]{Fischer-Ruzhansky}, we note that Miller showed that the existence of a positive Rockland operator is equivalent to $\GG$ being a graded Lie group (among all Lie groups endowed with a homogeneous structure). Through this positive Rockland operator (e.g. the sublaplacian in Heisenberg groups), much work has been done to develop the theory of Riesz/Bessel operators and pseudodifferential calculus on graded Lie groups. The following are some of these properties which will be useful during the proof of Theorem \ref{main theorem}.

    \subsection{Bessel potentials and Singular Integrals}
    
    In the rest of the paper, we will refer to a single ``classical'' positive Rockland operator for each group (see \cite[Lemmas 4.1.8, 4.2.3 and Corollary 4.1.10]{Fischer-Ruzhansky}), which in the stratified case (such as Carnot groups) is the standard sublaplacian operator. Nonetheless, the following results hold for any such choice.
    
    \begin{definition}(Bessel potential)
        \label{definition bessel potentials}
        Let $\GG$ be a Carnot group and $\mathcal{R}$ its positive Rockland operator, and call $\theta$ the homogeneous degree of $\mathcal{R}$. Then define for $\alpha > 0$ the \emph{Bessel potential} operator $(1 + \mathcal{R})^{-\alpha/\theta}$ as the fractional power of $\operatorname{Id} + \mathcal{R}$ (see for example \cite[§A.3]{Fischer-Ruzhansky}).
    \end{definition}
    
    As in the Euclidean space, through Bessel (or Riesz) potentials one can define appropriate notions of ``anisotropic'' Sobolev spaces (see e.g. \cite[§4.4]{Fischer-Ruzhansky}). In that light, the following can be viewed as a Sobolev embedding compactness result:
    
    \begin{lemma}
        \label{lemma bessel potentials}
        Given $\alpha>0$, the operator $(1 + \mathcal{R})^{-\alpha/\theta}$ can be extended to a bounded right-convolution operator from $L^p(\GG)$ ($1 \leq p < \infty$) to itself. Its convolution kernel $\mathcal{J}_\alpha$ is smooth on $\GG \backslash \{0\}$ and is in $L^p(\GG)$ for any $1 \leq p < p(\alpha,Q) := \min\left\{2, {Q}/{(Q-\alpha)}\right\}$,  $Q$ being the homogeneous dimension of $\GG$. \\
        Moreover,  $(1 + \mathcal{R})^{-\alpha/\theta}$ extends to a compact operator from $L^1_c(B^\GG_1)$ to $L^p(\GG)$ for any $1 \leq p < p(\alpha,Q)$.
    \end{lemma}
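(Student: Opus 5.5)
The plan is to rely on the kernel properties of Bessel potentials on graded groups, as developed in \cite[§4.3]{Fischer-Ruzhansky}, and then obtain compactness from a Kolmogorov--Riesz-type argument. Throughout, $\mathcal{J}_\alpha$ denotes the right-convolution kernel, so that $(1+\mathcal{R})^{-\alpha/\theta} f = f * \mathcal{J}_\alpha$.

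\textbf{Step 1: properties of the kernel.} I will first recall, via the heat semigroup representation
\[
\mathcal{J}_\alpha = \frac{1}{\Gamma(\alpha/\theta)}\int_0^\infty t^{\alpha/\theta-1}e^{-t}h_t\,dt,
\]
that $\mathcal{J}_\alpha$ is smooth away from $0$ and nonnegative. Here $h_t$ is the heat kernel of $\mathcal{R}$, which is a Schwartz function, satisfies $h_t = t^{-Q/\theta}h_1\circ\delta_{t^{-1/\theta}}$, and has $\int h_t = 1$. This gives $\|\mathcal{J}_\alpha\|_{L^1}=1$. Splitting the $t$-integral at $t = |x|^\theta$, where $|x|$ is a homogeneous norm, yields two estimates. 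Near the origin we get $|\mathcal{J}_\alpha(x)|\lesssim |x|^{\alpha-Q}$ (with a logarithm if $\alpha=Q$, and boundedness if $\alpha>Q$). Away from the origin the factor $e^{-t}$ forces rapid decay. Integrating in homogeneous polar coordinates, using $\mathscr{L}^d(B^\GG(0,r))\sim r^Q$, shows that $\mathcal{J}_\alpha\in L^p$ exactly when $p(Q-\alpha)<Q$. The cap at $2$ in $p(\alpha,Q)$ is harmless; I only need $1\le p<p(\alpha,Q)$. Boundedness on $L^p$ for $1\le p<\infty$ then follows from Young's inequality on groups, $\|f*\mathcal{J}_\alpha\|_p\le\|f\|_p\|\mathcal{J}_\alpha\|_1$. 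On $L^2$ this extension agrees with the functional calculus.

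\textbf{Step 2: reduction to equi-continuity.} Let $F$ be a bounded subset of $L^1_c(B^\GG_1)$, say $\|f\|_1\le1$ with $\supp f\subset B^\GG_1$. I will show that $\{f*\mathcal{J}_\alpha\}$ is precompact in $L^p(\GG)$. I use the Kolmogorov--Riesz criterion on $\RR^d$, which needs three things.
\begin{enumerate}[label=(\roman*)]
\item \emph{Uniform boundedness.} This follows from Young's inequality: $\|f*\mathcal{J}_\alpha\|_p\le \|f\|_1\|\mathcal{J}_\alpha\|_p$.
\item \emph{Tightness.} Since $(f*\mathcal{J}_\alpha)(x)=\int f(y)\mathcal{J}_\alpha(y^{-1}x)\,dy$ with $y\in B^\GG_1$, Minkowski's inequality gives
\[
\|f*\mathcal{J}_\alpha\|_{L^p(\GG\setminus B^\GG_R)}\le \sup_{y\in B^\GG_1}\|\mathcal{J}_\alpha\|_{L^p(\GG\setminus B^\GG_{R-1})}\to0,
\]
using left-invariance of the distance.
\item \emph{Equicontinuity under translations.} This is the key step. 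Because the convolution kernel sits on the right, I should use right translations $\tau_z g(x)=g(xz)$. Then $\tau_z(f*\mathcal{J}_\alpha)=f*(\tau_z\mathcal{J}_\alpha)$, so
\[
\|\tau_z(f*\mathcal{J}_\alpha)-f*\mathcal{J}_\alpha\|_p\le \|f\|_1\,\|\tau_z\mathcal{J}_\alpha-\mathcal{J}_\alpha\|_p,
\]
again by Minkowski's inequality. The right-hand side tends to $0$ as $z\to0$, since right translation is strongly continuous on $L^p$ for $p<\infty$: approximate by $C_c$ functions, and note that Lebesgue measure is also right-invariant because $\GG$ is unimodular.
\end{enumerate}

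\textbf{Main obstacle.} The delicate point is that Kolmogorov--Riesz on $\RR^d$ uses Euclidean translations $x\mapsto x+z$, whereas I have controlled group right translations. I would handle this with a group version of the criterion: mollify with $\phi_\epsilon$ on the left, writing $\phi_\epsilon*g$. Then
\[
\|\phi_\epsilon*g-g\|_p\le\sup_{z\in\supp\phi_\epsilon}\|\tau_{z^{-1}}g-g\|_p
\]
is uniformly small over the family. Moreover, $\phi_\epsilon*g$ restricted to a compact set is equibounded and equicontinuous, since its derivatives fall on $\phi_\epsilon$. Arzelà--Ascoli, together with tightness, then yields total boundedness in $L^p$. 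A Euclidean approximation of the identity would also work, because on compacta $xz$ and $x+z$ differ by an amount that is small when $z$ is small. Finally, compactness from $L^1_c(B^\GG_1)$ holds for the norm topology of $L^1$ restricted to functions supported in $B^\GG_1$, which is what the statement asserts.
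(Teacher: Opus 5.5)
Your proof is correct in outline but takes a different route from the paper, and one step needs fixing. The paper does not derive the kernel properties. Boundedness and $\mathcal{J}_\alpha\in L^p$ are quoted from Fischer--Ruzhansky (Corollaries 4.3.11 and 4.3.13). Compactness is obtained as in De Philippis--Rindler, by approximating $\mathcal{J}_\alpha$ with kernels $\mathcal{J}^n\in C^1_c(\GG)$. Each $f\mapsto f*\mathcal{J}^n$ is compact from $L^1_c(B^\GG_1)$ to $L^p$ by Arzel\`a--Ascoli, since the images of the unit ball are equibounded, equi-Lipschitz and supported in a fixed compact set. The estimate $\|f*(\mathcal{J}_\alpha-\mathcal{J}^n)\|_p\le\|f\|_1\|\mathcal{J}_\alpha-\mathcal{J}^n\|_p$ then gives convergence in operator norm.

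You instead re-derive the kernel bounds by heat-kernel subordination, which makes the threshold $Q/(Q-\alpha)$ transparent, and you prove compactness through a Kolmogorov--Riesz criterion on the group. The two compactness arguments are closer than they look. After the fix below, your mollified family is $(f*\mathcal{J}_\alpha)*\phi_\epsilon=f*(\mathcal{J}_\alpha*\phi_\epsilon)$, so you are in effect approximating the kernel by $\mathcal{J}_\alpha*\phi_\epsilon$, with your tightness step replacing compact support of the approximants. The paper's version is shorter: the operator-norm limit handles tightness and equicontinuity at once, and no group version of Kolmogorov--Riesz has to be justified.

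The step to fix is the mollification. With the convention $f*g(x)=\int f(y)g(y^{-1}x)\,dy$ that you use, $\phi_\epsilon*g(x)=\int\phi_\epsilon(z)\,g(z^{-1}x)\,dz$ averages \emph{left} translates $g(z^{-1}\cdot)$. Your displayed bound, however, is in terms of right translates $\tau_{z^{-1}}g=g(\cdot\,z^{-1})$, which are what step (iii) controls. The Minkowski argument of (iii), applied to left translates, only yields $\|f(z^{-1}\cdot)-f\|_1\|\mathcal{J}_\alpha\|_p$, and this is not uniformly small over the family.

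The simplest repair is to put the mollifier on the right. Then $g*\phi_\epsilon(x)=\int g(xy^{-1})\phi_\epsilon(y)\,dy$ satisfies $\|g*\phi_\epsilon-g\|_p\le\sup_{y\in\supp\phi_\epsilon}\|\tau_{y^{-1}}g-g\|_p$. Moreover $X(g*\phi_\epsilon)=g*(X\phi_\epsilon)$ for left-invariant $X$, which together with H\"older gives the uniform bounds needed for Arzel\`a--Ascoli. Alternatively, you can control left translates directly using $\supp f\subset B^\GG_1$ and the identity $y^{-1}z^{-1}x=(y^{-1}z^{-1}y)(y^{-1}x)$, since the conjugates $y^{-1}zy$ are uniformly small for $y$ in a compact set.

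Two smaller points:
\begin{itemize}
\item Your aside about a Euclidean mollifier also needs an extra averaging argument. The reason is that $x+h=x\cdot w$ with $w=x^{-1}(x+h)$ depending on $x$, so it is not a single group translation.
\item Nonnegativity of $\mathcal{J}_\alpha$ and $\|\mathcal{J}_\alpha\|_1=1$ rely on $\mathcal{R}$ being the sublaplacian. For a general positive Rockland operator the heat kernel can change sign, but $\|\mathcal{J}_\alpha\|_1\le\|h_1\|_1<\infty$ is all you actually use.
\end{itemize}
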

    \begin{proof}
        The first part of the statement is taken from \cite[Corollary 4.3.11]{Fischer-Ruzhansky}, and $\mathcal{J}_\alpha \in L^p(\GG)$ for $p < p(\alpha,Q)$ as a consequence of \cite[Corollary 4.3.13]{Fischer-Ruzhansky}. The second part follows as in the proof of \cite[Lemma 2.1]{DPR}: by density of $C^1_c(\GG)$ in $L^1(\GG)$, we can write $(1 + \mathcal{R})^{-\alpha/\theta}$ as a limit (in operator norm) of compact right-convolution operators whose kernels are in $C^1_c(\GG)$.
    \end{proof}
    
    Recall that a fundamental solution $K$ of a differential operator $\mathcal{L}$ is a distribution such that $\mathcal{L}K = \delta_0$, the Dirac delta at zero. In particular, with fundamental solutions we may invert operators through convolution:
    \[
    \mathcal{L}(f*K) = f*\mathcal{L}K = f
    \]
    
    \begin{lemma} (\cite[Theorem 3.2.40]{Fischer-Ruzhansky})
        \label{lemma fundamental solution}
        Let $\mathcal{L}$ be a hypoelliptic, left-invariant homogeneous differential operator on $\GG$ such that its formal adjoint $\mathcal{L}^*$ is also hypoelliptic. Then, $\mathcal{L}$ admits a fundamental solution $K \in \mathscr{S}'(\GG)$ which is smooth on $\GG \backslash \{0\}$.
    \end{lemma}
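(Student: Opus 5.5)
The strategy is to reduce Lemma~\ref{lemma fundamental solution} to an $L^2$ inversion on the representation side, combined with homogeneity and hypoellipticity. Let $\nu$ be the homogeneous degree of $\mathcal{L}$. The first step is to show that $\mathcal{L}$ is invertible modulo the Rockland calculus. Since $\mathcal{L}$ and $\mathcal{L}^*$ are hypoelliptic, Theorem~\ref{thm_rockland} gives that $\Pi(\mathcal{L})$ and $\Pi(\mathcal{L}^*)=\Pi(\mathcal{L})^*$ are injective on $H_\infty$ for every nontrivial $\Pi\in\widehat{\GG}$. I would then form the positive operator $\mathcal{L}^*\mathcal{L}$, which is homogeneous of degree $2\nu$ and Rockland, because $\langle \Pi(\mathcal{L}^*\mathcal{L})v,v\rangle=\|\Pi(\mathcal{L})v\|^2$. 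This reduces the problem to a positive Rockland operator $P$. By the Helffer--Nourrigat theory, $P$ is essentially self-adjoint on $L^2(\GG)$ and satisfies $P\ge0$ with trivial kernel, so its spectral calculus is available, exactly as for $\mathcal{R}$ in Definition~\ref{definition bessel potentials}.

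The second step constructs the fundamental solution of $P$ by the heat semigroup. Set $h_t$ to be the kernel of $e^{-tP}$; it is a Schwartz function and satisfies the homogeneity relation $h_t(x)=t^{-Q/2\nu}h_1(\delta_{t^{-1/2\nu}}x)$. Define
\[
K_P:=\int_0^\infty h_t\,dt .
\]
If $2\nu<Q$, this integral converges away from $0$ and as a tempered distribution; it is homogeneous of degree $2\nu-Q$ and satisfies $PK_P=\delta_0$. If $2\nu\ge Q$, I would instead take $\int_0^\infty (h_t-\text{Taylor polynomial correction})\,dt$. This produces a kernel homogeneous up to logarithmic terms and polynomials, still a tempered distribution, and it solves $PK_P=\delta_0$ modulo a polynomial that can be removed. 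Smoothness off the origin follows either from hypoellipticity of $P$, since $PK_P=0$ on $\GG\setminus\{0\}$, or directly from the smoothness of $h_1$.

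The final step sets $K:=\mathcal{L}^*K_P$, understood as the differential operator applied to the distribution. Then $\mathcal{L}K=\mathcal{L}\mathcal{L}^*K_P$. To make this work, I would instead use $P=\mathcal{L}\mathcal{L}^*$, which is also Rockland by the symmetric argument with the injectivity of $\Pi(\mathcal{L}^*)$, so that $\mathcal{L}K=PK_P=\delta_0$. Left-invariant differential operators preserve $\mathscr{S}'$, and smoothness on $\GG\setminus\{0\}$ is preserved as well. This gives a tempered fundamental solution.

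The main obstacle is step two in the case $2\nu\ge Q$, where the heat integral diverges at large $t$. The fallback is to solve in $\mathscr{S}'$ by duality. One shows that $\mathcal{L}^*$ maps $\mathscr{S}$ onto a subspace on which an a priori estimate $|\langle\delta_0,\varphi\rangle|\lesssim \|\mathcal{L}^*\varphi\|_{\text{some Schwartz seminorm}}$ holds. This estimate comes from the homogeneous Sobolev estimates of Rockland operators. Hahn--Banach then extends the resulting functional to a tempered $K$ with $\mathcal{L}K=\delta_0$, and smoothness off $0$ is obtained by hypoellipticity.
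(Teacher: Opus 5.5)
Your argument is correct in outline, but it takes a genuinely different route from the paper. The paper proves nothing here: it quotes Theorem~3.2.40 of Fischer--Ruzhansky, a classical result of Folland and Geller. There it is proved by working with $\mathcal{L}$ directly, before Rockland operators and heat kernels are introduced. It also comes with the fine structure of $K$: homogeneous of degree $\nu-Q$ if $\nu<Q$, and of the form $K_0+p\log|x|$ otherwise.

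You instead do three things. First, you use Theorem~\ref{thm_rockland} to see that $P=\mathcal{L}\mathcal{L}^*$ is a positive Rockland operator of degree $2\nu$; your corrected choice is the right one, since $\mathcal{L}^*\mathcal{L}$ would give a fundamental solution of $\mathcal{L}^*$. Second, you build $K_P$ from the Schwartz heat kernel of $P$. Third, you set $K=\mathcal{L}^*K_P$. This gives an explicit formula and uses only the hypoellipticity of $\mathcal{L}^*$, since smoothness of $K$ off $0$ is inherited from $K_P$. The cost is heavier machinery: Helffer--Nourrigat and the heat-kernel theory of positive Rockland operators.

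More importantly, squaring doubles the degree, so the regime $2\nu\ge Q$ is not marginal. In the paper's own application on $\HH^1$, $\mathcal{B}^*_\lambda\mathcal{B}_\lambda$ already has degree $4=Q$, so your $P$ has degree $8$.

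For this reason your Step~2 in the case $2\nu\ge Q$ must be written precisely, and as stated it is slightly off. Subtracting the Taylor correction on all of $(0,\infty)$ creates a logarithmic divergence at $t=0$ from the top-degree terms; for $2\nu=Q$ this term is $t^{-1}h_1(0)$. Split at $t=1$ instead:
$K_P:=\int_0^1 h_t\,dt+\int_1^\infty (h_t-p_t)\,dt$.
Here $p_t(x)=\sum_{[\alpha]\le 2\nu-Q} c_\alpha\, t^{-(Q+[\alpha])/(2\nu)}x^\alpha$ is the homogeneous Taylor polynomial of $h_t$ at $0$. The $x^\alpha$ are monomials in exponential coordinates, $[\alpha]$ is their homogeneous degree, and $c_\alpha=\partial^\alpha h_1(0)/\alpha!$.

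The remainder satisfies $|h_t-p_t|(x)\lesssim t^{-(Q+N_1)/(2\nu)}|x|^{N_1}$ with $N_1>2\nu-Q$, so $K_P$ is a tempered function. A left-invariant operator homogeneous of degree $2\nu$ sends a homogeneous polynomial of degree $m$ to one of degree $m-2\nu$, hence to $0$ when $m<2\nu$. Therefore $Pp_t=0$. Since $h_T\to 0$ in $\mathscr{S}'$, you get $PK_P=(\delta_0-h_1)+\lim_{T\to\infty}(h_1-h_T)=\delta_0$ exactly, with no polynomial error to remove.

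You should also drop the Hahn--Banach fallback. The bound $|\varphi(0)|\lesssim p(\mathcal{L}^*\varphi)$ on all of $\mathscr{S}$ that it needs is equivalent to the existence of a tempered fundamental solution. So it cannot simply be cited from (homogeneous) Sobolev estimates, which in any case only control things modulo polynomials when the degree is large.
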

    
    We denote by $L^{1,\infty}(X,\mu)$ the space of $\mu$-measurable functions $f$ such that the quasi-norm $\norm[f][L^{1,\infty}] := \sup \{ \alpha \mu(\{x \in X: |f(x)| > \alpha\}) : \alpha > 0 \}$ is finite. We will also call \emph{of strong-type} $(p,q)$ any operator which extends continuously from $L^p$ to $L^q$. When it extends continuously from $L^p$ to $L^{1,\infty}$ it is instead called \emph{of weak-type} $(p,1)$. The following lemma can be viewed as an analog to the \Hormander-Mihlin multiplier theorem:
    
    \begin{lemma}
        \label{lemma singular integrals}
        Let $\mathcal{L}$ be as in Lemma \ref{lemma fundamental solution}, and call $K$ its fundamental solution. Then, for any left-invariant differential operator $P$ which is homogeneous of the same order as $\mathcal{L}$, the operator
        \[
        T:\mathscr{D}(\GG) \rightarrow \mathscr{D}'(\GG), \qquad Tu := Pu*K
        \]
        can be extended continuously from $L^p(\GG)$ to itself for any $1 < p < \infty$ and from $L^1(\GG)$ to $L^{1,\infty}(\GG)$.
    \end{lemma}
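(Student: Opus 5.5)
The plan is to recognize $T$ as a right-convolution operator with a homogeneous kernel of critical degree $-Q$, and to invoke Calderón--Zygmund theory on homogeneous groups, as in Folland--Stein or Fischer--Ruzhansky.

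\emph{Step 1: the kernel.} Let $\nu$ be the homogeneous order of $\mathcal{L}$ (and of $P$). For $u\in\mathscr D(\GG)$, the convolution identities recalled in §\ref{subsec_Carnot} (left-invariant fields pass from the first to the second factor as right-invariant ones, up to transposition) give
\[
Tu = Pu*K = u*(\tilde P K),
\]
where $\tilde P$ is the right-invariant operator associated with $P$ via $X\mapsto X^R$. Set $\kappa:=\tilde P K$. By Lemma~\ref{lemma fundamental solution}, $K$ is smooth on $\GG\setminus\{0\}$. Moreover, $K$ can be chosen homogeneous of degree $\nu-Q$. Indeed, $\mathcal L K=\delta_0$ and $\delta_0$ is homogeneous of degree $-Q$, so $K\circ\delta_\lambda-\lambda^{\nu-Q}K$ solves the homogeneous equation. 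Hypoellipticity makes this difference a polynomial, and the standard Folland argument removes it. When $\nu\ge Q$ a logarithmic term may appear; it is annihilated by $P$, since $P$ has order $\nu$. Hence $\kappa$ is smooth away from $0$ and homogeneous of degree $-Q$. Its restriction to $\GG\setminus\{0\}$ is therefore a Calderón--Zygmund kernel, and $\kappa$ differs from it by at most a multiple of $\delta_0$ plus a principal value distribution.

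\emph{Step 2: cancellation and $L^2$ bound.} A homogeneous kernel of degree $-Q$ defines a bounded operator only if it has mean zero on the annulus $\{1\le |x|\le 2\}$. This holds here because $\kappa$ is a derivative of order $\nu$ of a kernel homogeneous of degree $\nu-Q$: the integral of $\tilde P K$ over annuli can be rewritten as a boundary flux, and the fluxes cancel by homogeneity. This follows the proof of Folland's Proposition on kernels of type $0$, in \cite{Folland-Stein} or \cite[§3.2]{Fischer-Ruzhansky}. Given the cancellation, the Knapp--Stein / Folland--Stein theorem yields $L^2$-boundedness of $u\mapsto u*\mathrm{p.v.}\,\kappa$. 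An alternative route to $L^2$ is Plancherel on $\widehat\GG$: one shows that $\Pi(P)\Pi(\mathcal L)^{-1}$ is uniformly bounded in $\Pi$, but I would rely on the kernel theorem instead.

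\emph{Step 3: Calderón--Zygmund theory.} Since $\kappa$ is $C^1$ and homogeneous of degree $-Q$, it satisfies the Hörmander condition
\[
\int_{|x|>c|y|}|\kappa(y^{-1}x)-\kappa(x)|\,dx\le C,
\]
and the analogous condition for right translations. Here $|\cdot|$ is a homogeneous norm, and the estimate uses the mean value inequality on groups. The space $(\GG,d,\Leb[d])$ is doubling, so it is a space of homogeneous type. Coifman--Weiss theory then gives weak type $(1,1)$, Marcinkiewicz interpolation with the $L^2$ bound gives $1<p\le2$, and duality gives $2\le p<\infty$; the adjoint is again of the same form. Density of $\mathscr D(\GG)$ then yields the continuous extensions.

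The main obstacle is Steps 1--2: proving that $K$ is exactly homogeneous, handling the possible logarithmic case and the possible $\delta_0$ component of $\tilde PK$, and verifying the cancellation. I would cite \cite[Theorem 3.2.40 and Corollary 3.2.49]{Fischer-Ruzhansky}, which state that kernels of homogeneous left-invariant operators of degree $0$ are Calderón--Zygmund, rather than redo these computations.
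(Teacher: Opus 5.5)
Your proposal follows the same route as the paper's proof, which is only a sketch. Both identify $T$ as right convolution with a kernel homogeneous of degree $-Q$ and smooth off the origin. Both get the $L^p$ bounds from the Folland--Stein/Knapp--Stein theory of such kernels (the paper cites Stein's treatment on $\HH^n$ and asserts it transfers to stratified groups). Both get the weak type $(1,1)$ bound by recognizing $T$ as a Coifman--Weiss singular integral, via \cite[Theorem A.4.4]{Fischer-Ruzhansky}. You simply make the intermediate steps explicit.

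One justification in your Step 1 is false, although its conclusion is correct: the logarithmic term is in general \emph{not} annihilated by $\tilde P$.
\begin{itemize}
\item A counterexample already exists in $\RR^2$ with $\mathcal{L}=\Delta$. There $K=\frac{1}{2\pi}\log|x|$, so the log term is all of $K$, while $\partial_1\partial_2K$ is a nonzero Riesz-type kernel and $\Delta K=\delta_0$.
\item The case $\nu\ge Q$ does occur in the paper's application, where $\nu=2s$; for instance $\nu=4=Q$ for $\mathscr{A}^2$ in $\HH^1$.
\end{itemize}
What $\tilde P$ actually kills is the homogeneous polynomial $p$, of degree $\nu-Q<\nu$, which measures the failure of homogeneity. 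Writing $K=K_0+p\log|x|$, you have $K\circ\delta_r=r^{\nu-Q}K+r^{\nu-Q}(\log r)\,p$. Since $\tilde P(f\circ\delta_r)=r^{\nu}(\tilde Pf)\circ\delta_r$ and $\tilde Pp=0$, you get $(\tilde PK)\circ\delta_r=r^{-Q}\,\tilde PK$. So $\kappa$ is still homogeneous of degree $-Q$, with the log term contributing to its principal-value part and/or its $\delta_0$ part.

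This correction also matters for Step 2, because your flux argument there treats $K$ itself as homogeneous of degree $\nu-Q$. You do not need it: a distribution homogeneous of degree $-Q$ and smooth off $0$ is always $c\,\delta_0$ plus the principal value of a kernel with vanishing averages on annuli, so the cancellation is automatic. With this fix, Step 3 goes through as you wrote it.
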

    \begin{remark}
        Consider, with the notation above, the equation
        \[
        \mathcal{L}u = Pv.
        \]
        Then the lemma above is implying that the map
        \[
        v \longmapsto u
        \]
        is of strong-type $(p,p)$ for $1 < p < \infty$ and of weak-type $(1,1)$.
    \end{remark}
    \begin{proof}[Sketch of proof]		
        The statement follows from \cite[§XIII.3.1]{stein-murphy_harmonic_analysis}, in particular Remark (ii), when $\GG$ is an Heisenberg Group $\HH^n$ (see §\ref{subsec_heisenberg}), but the same argument can be repeated in any stratified Lie group (similar arguments are used during the proof of \cite[Theorem 2]{Nagel-Ricci-Stein}, but in much more generality).
        The weak $(1,1)$ bound for $T$ is not explicitly stated but holds nonetheless: see \cite[Theorem A.4.4]{Fischer-Ruzhansky}: essentially, the authors in \cite{stein-murphy_harmonic_analysis} and \cite{Nagel-Ricci-Stein} prove that $T$ is a singular integral operator in the sense of Coifman-Weiss (see \cite[Chapitre III]{Coifman-Weiss}).
    \end{proof}
    
    \section{Structure of \texorpdfstring{$\mathscr{A}$}{A}-free measures in Carnot Groups}\label{sec_mainresult}
In this Section we prove Theorem~\ref{main theorem}; before that, however, we need to fix some notation and state some preliminary results.
    
    Consider a Carnot Group $\GG \equiv \RR^d$ of step $s$ and rank $\rho$ and denote by $X = (X_1, \ldots, X_\rho)$ a basis for its horizontal bundle $\g_1$. Consider an open subset $\Omega \subseteq \GG$ and a left-invariant linear differential operator $\mathscr{A}$ of order $k$, i.e.
    \begin{gather}
        \notag \mathscr{A} : \mathscr{D}'(\Omega;\RR^m) \rightarrow  \mathscr{D}'(\Omega;\RR^n) \\
        \label{eq A} [\mathscr{A} u](x) = \sum_{|\alpha| \leq k} A_\alpha(x) X^\alpha u \qquad \text{for } x \in \Omega,
    \end{gather}
    where $A_\alpha(x) \in \RR^{n \times m}$ and, for each $\alpha = (\alpha_1, \ldots, \alpha_h) \in \{1,\dots,\rho\}^h$, we set $|\alpha|:=h$ and $X^\alpha = X_{\alpha_1} \cdots X_{\alpha_h}$.

\begin{definition}\label{def_Afree}
We say that a Radon measure $\mu \in \mathcal{M}(\Omega;\RR^m)$ is \emph{$\mathscr{A}$-free} whenever
    \[
    \mathscr{A} \mu = 0 \quad \text{in } \mathscr{D}'(\Omega;\RR^n),
    \]
    where $\mu$ is seen as a distribution in $\mathscr{D}'(\Omega;\RR^m)$ of order $0$.
\end{definition}

    Now let $\mathscr{A}^k$ be the $k$-homogeneous part of $\mathscr{A}$. For any $\lambda \in \RR^m$, define the following operator
    \[
    \mathcal{B}_\lambda : \mathscr{D}'(\Omega) \rightarrow  \mathscr{D}'(\Omega;\RR^n), \qquad \mathcal{B}_\lambda \varphi := \mathscr{A}^k(\lambda \varphi).
    \]
    and denote by $B^*_\lambda$ its formal transpose, that is
    \[
    \langle \mathcal{B}_\lambda^*(\xi), \varphi \rangle = \langle \xi, \mathcal{B}_\lambda(\varphi) \rangle
    \]
    for any $\varphi \in \mathscr{D}(\Omega), \xi \in \mathscr{D}(\Omega;\RR^n)$.
    
    \begin{definition}[Hypoelliptic wave cone]
        \label{definition hypoelliptic wavecone}
        With the notation above, we define the \emph{hypoelliptic wave cone} $\Lambda^H_\mathscr{A}(x) \subseteq \RR^m$ of $\mathscr{A}$ at $x \in \Omega$ via
        \[
        \lambda \not\in \Lambda^H_\mathscr{A}(x) \quad \Longleftrightarrow \quad \mathcal{B}^*_\lambda \mathcal{B}_\lambda \text{ is hypoelliptic near } x.
        \]
        Notice that whenever $\mathscr{A}$ is left-invariant, the dependence on $x$ can be dropped. In that case, we will just write $\Lambda^H_\mathscr{A}$.
    \end{definition}
    The following equivalent characterization is an immediate corollary of Theorem \ref{thm_rockland}:
    \begin{lemma}
        \label{lemma equivalence hypoelliptic wave cone}
        The hypoelliptic wave cone $\Lambda^H_\mathscr{A}(x)$ coincides with the space of $\lambda$'s such that the linear map
        \[
        \Pi (\mathcal{B}^*_\lambda\mathcal{B}_\lambda)(x) : H_\infty \rightarrow H_\infty
        \]
        is not injective for some $\Pi \in \widehat{\GG}$, where $H_\infty$ is the space of $C^\infty$-vectors of $\Pi$.
        That is,
        \[
        \Lambda^H_\mathscr{A}(x) = \left\{ \lambda \in \RR^m : \exists \Pi \in \widehat{\GG} \text{ such that } \ker \Pi (\mathcal{B}^*_\lambda \mathcal{B}_\lambda)(x) \not= 0 \right\}.
        \]
    \end{lemma}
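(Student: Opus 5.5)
The plan is to apply Theorem~\ref{thm_rockland} to the scalar operator $P:=\mathcal{B}^*_\lambda\mathcal{B}_\lambda$, for a fixed $\lambda\in\RR^m$. (For left-invariant $\mathscr{A}$ we simply drop $x$; if the coefficients vary, we freeze them at $x$ and read ``hypoelliptic near $x$'' for the frozen operator.) To do so we must check that $P$ meets the hypotheses of that theorem: it is a left-invariant, homogeneous differential operator on the graded group $\GG$.

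\textbf{Step 1: $P$ is left-invariant and homogeneous.} The operator $\mathcal{B}_\lambda\varphi=\sum_{|\alpha|=k}A_\alpha\lambda\, X^\alpha\varphi$ is a vector of left-invariant operators, each a combination of words of length $k$ in the horizontal fields $X_1,\dots,X_\rho$. Each $X_j\in\g_1$ is homogeneous of degree $1$ under $\delta_r$, so every such word is homogeneous of degree $k$. For the formal transpose, recall that a left-invariant field is divergence free with respect to Haar measure. Hence $X_j^*=-X_j$, and the transpose of $X^\alpha$ is $(-1)^k X_{\alpha_h}\cdots X_{\alpha_1}$, again left-invariant and $k$-homogeneous. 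Writing $b_\alpha:=A_\alpha\lambda\in\RR^n$, we get
\[
P=\sum_{|\alpha|=|\beta|=k}(-1)^k\langle b_\beta,b_\alpha\rangle\,(X^\beta)^t X^\alpha ,
\]
which is a left-invariant homogeneous operator of degree $2k$.

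\textbf{Step 2: apply Theorem~\ref{thm_rockland}.} The theorem says that $P$ is hypoelliptic if and only if $\Pi(P)$ is injective on $H_\infty$ for every nontrivial $\Pi\in\widehat{\GG}$. Taking complements, $\lambda\in\Lambda^H_\mathscr{A}$ if and only if $\ker\Pi(P)\neq0$ for some nontrivial $\Pi$. The definition of hypoellipticity here is local (``near $x$''), while Theorem~\ref{thm_rockland} speaks of hypoellipticity on $\GG$. For left-invariant operators these agree: hypoellipticity on the whole group is the local statement at every point, and left translations carry any neighbourhood of $x$ to a neighbourhood of any other point.

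\textbf{Step 3: the trivial representation.} The statement of the lemma quantifies over all of $\widehat{\GG}$, whereas Theorem~\ref{thm_rockland} excludes the trivial representation. For $k\ge1$, $P$ has no zero-order term, so it acts as $0$ on $\CC$ in the trivial representation and its kernel is always nonzero there. The intended reading of the lemma must therefore be that $\Pi$ ranges over nontrivial representations, consistent with the footnote to Theorem~\ref{thm_rockland}; I would state this convention explicitly.

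The main obstacle, though mild, is Steps 1 and 3: confirming the homogeneity and left-invariance of the transpose, and handling the trivial-representation convention. Once these are settled, the lemma is a direct restatement of Theorem~\ref{thm_rockland}.
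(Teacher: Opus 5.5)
Your proposal is correct and follows the paper's argument. The paper records the lemma as an immediate corollary of Theorem~\ref{thm_rockland} applied to the left-invariant, $2k$-homogeneous operator $\mathcal{B}^*_\lambda\mathcal{B}_\lambda$. Your Steps 1--3 spell out what that corollary leaves implicit: the hypothesis check, the passage from hypoellipticity near $x$ to hypoellipticity on $\GG$ via left translations, and the necessary exclusion of the trivial representation (as in the footnote to Theorem~\ref{thm_rockland}), without which the literal statement would give $\Lambda^H_\mathscr{A}=\RR^m$.

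One cosmetic slip: in your display for $P$, the factor $(-1)^k$ double counts the sign already contained in $(X^\beta)^t$. This only rescales $P$, so it affects neither hypoellipticity nor the kernels of $\Pi(P)$.
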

    
    \begin{remark}\label{rem_wavehypowave}
        Recall that the \emph{wave cone} $\Lambda_\mathscr{A}(x)$ of $\mathscr{A}$ is defined as (see~\cite{DPR})
        \[
        \lambda \not\in \Lambda_\mathscr{A}(x) \quad \Longleftrightarrow \quad \mathcal{B}_\lambda \text{ is elliptic near } x.
        \]
        Clearly, $\Lambda_\mathscr{A} \supseteq \Lambda^H_\mathscr{A}$. Therefore, Theorem \ref{main theorem} provides a net improvement of \cite[Theorem 1.1]{DPR} whenever $\Lambda_\mathscr{A} \supsetneq \Lambda^H_\mathscr{A}$, see for instance Remark~\ref{rem_dueconiHeisenberg} below.
    \end{remark}

    \begin{remark}\label{rem_partivarie}
        As it is implicitly used throughout the paper, it is worth observing that the total variation $|\mu^s|$ of the singular part $\mu^s$ coincides with the singular part $|\mu|^s$ of the total variation $|\mu|$. A similar consideration applies to the absolutely continuous parts, for which one has the equality $|\mu^a|=|\mu|^a$.
        In particular, one has $\frac{d\mu}{d|\mu|}=\frac{d\mu^s}{d|\mu^s|}$ $|\mu^s|$-almost everywhere.
    \end{remark}

    \subsection{Tools}\label{subsec_tools}
    We state here a few technical results that will be used in the proof of Theorem~\ref{main theorem}. The first one  is taken directly\footnote{Apart from the obvious observation that $B^\GG_1$ is contained in some Euclidean ball.} from \cite[Lemma 2.2]{DPR}; we denote by $L^1_c(B^\GG_1)$  the space of $L^1$ functions supported on $B^\GG_1$.
    \begin{lemma}
        Let $\{f_j\}_j \subseteq L^1_c(B^\GG_1)$ be a sequence such that
        \begin{enumerate}[label=(\alph*)]
            \label{corollary Vitali conv thm}
            \item $f_j \overset{*}{\rightharpoonup} 0$ in $\mathscr{D}'(B^\GG_1)$;
            \item the negative parts $f_j^-$ vanish in measure, i.e.
            \[
            \lim_{j \rightarrow \infty} \Leb[d](\{f_j^- > t\}) = 0 \quad \text{for any } t > 0;
            \]
            \item the negative parts $f_j^-$ are equi-integrable, i.e.
            \[
            \lim_{\Leb[d](E) \rightarrow 0} \sup_{j \in \NN} \int_{E} f_j^- \ d\Leb[d] = 0.
            \]
        \end{enumerate}
        Then, $f_j \rightarrow 0$ in $L^1_\loc(B^\GG_1)$.
    \end{lemma}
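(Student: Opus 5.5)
The argument is purely measure-theoretic: the group structure plays no role, since $B^\GG_1$ is a bounded open subset of $\RR^d$ and $\Leb[d]$ is Lebesgue measure. The plan is to use (b) and (c) to show that $\int f_j^-\to0$ on compact sets. Then (a), tested against a suitable cutoff, forces $\int f_j^+\to0$ as well, and $\int|f_j|\to0$ locally follows.

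\emph{Step 1: the negative parts converge to zero in $L^1$.} Fix $\varepsilon>0$. By (c) there is $\delta>0$ such that $\sup_j\int_E f_j^-\,d\Leb[d]<\varepsilon$ whenever $\Leb[d](E)<\delta$. Set $t:=\varepsilon/\Leb[d](B^\GG_1)$; note that $\Leb[d](B^\GG_1)<\infty$ because $B^\GG_1$ is contained in a Euclidean ball. Split
\[
\int_{B^\GG_1} f_j^- \;\le\; \int_{\{f_j^->t\}} f_j^- \;+\; t\,\Leb[d](B^\GG_1).
\]
By (b), $\Leb[d](\{f_j^->t\})<\delta$ for $j$ large, so the first term is less than $\varepsilon$. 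The second term equals $\varepsilon$. Hence $\|f_j^-\|_{L^1(B^\GG_1)}\to0$.

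\emph{Step 2: the positive parts converge to zero on compact sets.} Fix a compact set $K\subset B^\GG_1$ and choose $\varphi\in C^\infty_c(B^\GG_1)$ with $0\le\varphi\le1$ and $\varphi\equiv1$ on $K$. Then
\[
\int_K f_j^+ \;\le\; \int \varphi f_j^+ \;=\; \int\varphi f_j+\int\varphi f_j^- \;\le\; \langle f_j,\varphi\rangle+\|f_j^-\|_{L^1}.
\]
The first term tends to $0$ by (a), and the second tends to $0$ by Step 1. Combining the two steps, $\int_K|f_j|=\int_K f_j^+ + \int_K f_j^-\to0$ for every compact $K\subset B^\GG_1$, which is exactly convergence to $0$ in $L^1_\loc(B^\GG_1)$.

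No step is a genuine obstacle. The only points needing care are that $B^\GG_1$ has finite Lebesgue measure (needed to choose $t$ in Step 1) and that smooth compactly supported cutoffs exist, which is true in $\RR^d$. The hypothesis that the $f_j$ are supported in $B^\GG_1$ is not actually needed for the local statement.
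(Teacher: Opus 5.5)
Your proof is correct and follows essentially the same route as the paper. The paper does not reprove the lemma; it imports it from \cite[Lemma 2.2]{DPR}, whose argument likewise gets $f_j^-\to0$ in $L^1$ from (b)--(c) (your $\varepsilon$--$t$ splitting is a hands-on Vitali argument) and then tests (a) against a nonnegative cutoff, with the only Carnot-specific input being, as you note, that $B^\GG_1$ is contained in a Euclidean ball and so has finite Lebesgue measure.
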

    
    The second tool is a “weak" Besicovitch-type differentiation theorem in Carnot groups, which differs  from the classical statement in that the limit is taken only along a {\em sequence} of infinitesimal radii. 

    \begin{theorem}
        \label{weak besicovitch thm Carnot}
        Let $\nu$ be a Radon measure on $\GG$ and $f \in L^1(\GG, \nu)$. Then we may find, for $\nu$-almost any $x \in \GG$, an infinitesimal sequence $(r_k^x)_k$  such that
        \begin{equation}
            \label{measure differentiation Carnot}
            \lim_{k \rightarrow \infty} \fint_{B^\GG(x,r_k^x)} |f-f(x)| \ d\nu = 0.
        \end{equation}
    \end{theorem}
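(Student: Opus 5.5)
The idea is to get a sequence of good radii from a Vitali-type covering argument rather than from Besicovitch. Carnot balls with a homogeneous distance are doubling for Haar measure, but $\nu$ is an arbitrary Radon measure. So the plan is to use the weaker Vitali covering property, which holds for \emph{any} Radon measure on a metric space once we only ask for \emph{some} sequence of radii. This is the situation of Federer's 2.8.17–2.8.18: balls in a directionally limited metric space. Carnot groups need not be directionally limited, though. I would therefore argue instead through the $5r$-covering lemma applied at points of positive upper density.

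First reduce to a countable family of constants. Fix a countable dense set $\{q_i\}\subset\RR$ (or $\RR^m$ if $f$ is vector valued). It suffices to show that for each fixed $q$ and $\nu$-a.e.\ $x$,
\[
\liminf_{r\to0}\fint_{B^\GG(x,r)}|f-q|\,d\nu \le |f(x)-q|.
\]
This is not quite enough to conclude directly, because a liminf per $q$ yields sequences depending on $q$. So I would aim for the stronger statement
\[
\liminf_{r\to 0}\fint_{B^\GG(x,r)}|f-f(x)|\,d\nu=0 \qquad \text{for $\nu$-a.e.\ } x,
\]
which immediately produces the required $r_k^x$.

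To prove it, take $\varepsilon>0$ and an approximation $g\in C_c(\GG)$ with $\|f-g\|_{L^1(\nu)}<\eta$. Then
\[
\fint_B|f-f(x)|\,d\nu\le \fint_B|f-g|\,d\nu+\fint_B|g-g(x)|\,d\nu+|g(x)-f(x)|.
\]
The middle term tends to $0$ by continuity of $g$, and the last term is small off a set of measure at most $\eta/\varepsilon$. So everything reduces to the lower-maximal estimate: the set
\[
E_\varepsilon=\Big\{x:\ \liminf_{r\to0}\fint_{B(x,r)}h\,d\nu>\varepsilon\Big\},\qquad h=|f-g|,
\]
satisfies $\nu(E_\varepsilon)\le C\|h\|_{L^1(\nu)}/\varepsilon$.

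Proving this estimate is the main obstacle. Here the liminf is essential, since it gives $\nu(B(x,r))<\varepsilon^{-1}\int_{B(x,r)}h\,d\nu$ for \emph{all} small $r$, including $5r$ versus $r$. First restrict to a bounded piece of $E_\varepsilon$ and to the subset where $\nu(B(x,5r))\le M\nu(B(x,r))$ for infinitely many small $r$. On this subset the Vitali $5r$-lemma (valid in any doubling metric space, such as $\GG$) gives disjoint balls $B_i$ with
\[
\nu\Big(\bigcup_i 5B_i\Big)\le M\sum_i\nu(B_i)\le \frac{M}{\varepsilon}\,\|h\|_{L^1(\nu)}.
\]
The remaining set, where $\nu(B(x,5r))>M\nu(B(x,r))$ for all small $r$, is handled by the standard growth argument: iterating the inequality gives $\nu(B(x,5^{-j}))\lesssim M^{-j}$. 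Choosing $M>5^{Q}$ and comparing with the Haar measure, this set has $\nu$-measure zero. The cleaner way to see this is the well-known fact that Radon measures on doubling metric spaces are asymptotically doubling along sequences, i.e.\ $\limsup_{r\to0}\nu(B(x,2r))/\nu(B(x,r))<C(Q)$ for $\nu$-a.e.\ $x$, since otherwise the measure decays faster than any power, contradicting local finiteness after a covering count. With this fact the Vitali argument applies along those radii.

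Finally, let $\eta\to0$ and take a countable intersection over $\varepsilon=1/n$. This gives $\liminf_r\fint_{B(x,r)}|f-f(x)|\,d\nu=0$ $\nu$-a.e., and extracting $r_k^x$ realizing the liminf gives \eqref{measure differentiation Carnot}.
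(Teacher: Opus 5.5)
Your argument is correct, and it produces the good radii differently from the paper. The paper proceeds in three steps:
\begin{enumerate}
\item It proves a weak doubling lemma: for $\nu$-a.e.\ $x$, $\limsup_{r\downarrow0}\nu(B^\GG(x,tr))/\nu(B^\GG(x,r))>0$. The proof uses Egorov, the comparison of Euclidean and Carnot balls in Corollary~\ref{estimate d_c}, and a Besicovitch covering in $\RR^d$.
\item It then \emph{fixes}, for each $x$, a sequence $r_k^x$ of doubling radii with a point-dependent constant $j$. Along this fixed sequence it defines a maximal function and proves a weak-$(1,1)$ bound for $\nu^*$ on each $E_j$ via the $5r$-covering lemma.
\item It concludes by density of continuous functions, obtaining a genuine limit along $r_k^x$.
\end{enumerate}

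You instead bound the lower maximal function $\liminf_{r\to0}\fint_{B^\GG(x,r)}h\,d\nu$. A lower bound on the liminf holds at \emph{every} sufficiently small radius, so you can choose the doubling radius after the fact inside the covering argument; no sequence has to be fixed in advance. The doubling you need, at arbitrarily small scales with a uniform constant $M>5^Q$, comes from the growth argument compared with the Haar measure (Ahlfors $Q$-regularity of $\GG$), not from Euclidean balls.

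Your route is more intrinsic and works essentially verbatim in any doubling metric measure space. The paper's route gives more structure. Its radii depend only on $\nu$, not on $f$, so the same radii serve every $f\in L^1(\nu)$. The averages also converge along the whole sequence, hence along every subsequence. This helps when the Lebesgue-point property must be combined with other blow-up requirements along a common sequence, as in the proof of Theorem~\ref{main theorem}. With your liminf formulation the sequence depends on $f$.

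Two small corrections to your side remarks, neither of which affects the main argument.
\begin{itemize}
\item The ``well-known fact'' you quote with $\limsup_{r\to0}\nu(B(x,2r))/\nu(B(x,r))<C(Q)$ is false for general Radon measures. What holds, and what your growth argument actually proves, is the $\liminf$ version, i.e.\ doubling along some sequence of radii.
\item The iteration gives decay like $r^{\log_5 M}$ with $\log_5 M>Q$, not faster than every power. That is still enough.
\end{itemize}
To make the comparison with Haar measure rigorous, first split the bad set into countably many pieces on which the threshold scale (and hence the constant in $\nu(B(x,5^{-j}))\lesssim M^{-j}$) is uniform, and only then do the covering count. The paper uses Egorov for the analogous step.
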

    
    Theorem~\ref{weak besicovitch thm Carnot} is proved in Appendix~\ref{app_Besicovitch}.
    
    Eventually, the proof of Theorem~\ref{main theorem} uses the language (and the existence) of {\em tangent measures}. 
    Let $\Omega \subseteq \GG$ be an open set and let $\mu \in \mathcal{M}_\loc(\Omega;V)$ be a Radon measure taking values in a real finite dimensional vector space $V$. Given $x_0 \in \supp \mu$ and an infinitesimal sequence of radii $r_j \downarrow 0$, we will denote by
    \[
    \nu_j := \frac{T^{x_0,r_j}_\#\mu}{|\mu|(B^\GG(x_0,r_j))}
    \]
    the blow-up sequence of $\mu$ at $x_0$ along $(r_j)_j$, where
    \[
    T^{x_0,r_j}(x) := \delta_{1/r_j} (x_0^{-1} \cdot x), \qquad \text{i.e.,} \qquad T^{x_0,r_j}_\#\mu(E) = \mu(x_0 \delta_{r_j}(E)).
    \]
    
    \begin{definition}[Tangent measure]
        The space $\operatorname{Tan}(\mu,x_0)$ of {\em tangent measures} to $\mu$ at $x_0$ consists of all weak-* limits of blow-up sequences $\nu_j$ for any choice of radii $r_j \downarrow 0$.
    \end{definition}
    
    The existence of tangent measures
    is stated in the following Proposition, whose proof is postponed to Appendix~\ref{app_misuretangenti}.
    
    \begin{proposition}
        \label{prop existence of tangent measures}
        For any measure $\mu \in \mathcal{M}_\loc(\Omega; V)$, at $|\mu|$-almost every $x_0 \in \supp \mu$ the set $\operatorname{Tan}(\mu,x_0)$ contains a non-zero measure whose support contains the origin.
    \end{proposition}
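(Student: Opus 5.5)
We argue as in the Euclidean case: reduce to the polar decomposition, pick good points via a Lebesgue-type differentiation, and extract a limit of blow-ups by compactness. Write $\mu = g\,|\mu|$ with $g := \frac{d\mu}{d|\mu|}$, which satisfies $|g|=1$ $|\mu|$-a.e.; the components of $g$ are $L^1_\loc(|\mu|)$ with values in the finite dimensional space $V$. By a localization we may assume $|\mu|$ is finite. The argument has three steps.

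\textbf{Step 1: good points.} Apply Theorem~\ref{weak besicovitch thm Carnot} (componentwise, with $\nu=|\mu|$ and $f=g$). For $|\mu|$-a.e.\ $x_0\in\supp\mu$ this gives a sequence $r_j=r_j^{x_0}\downarrow 0$ with
\[
\fint_{B^\GG(x_0,r_j)} |g-g(x_0)|\,d|\mu|\to 0 .
\]
The difficulty compared with the Euclidean proof is the lack of a doubling property for $|\mu|$ along these radii. We therefore want to work with the total variations $\sigma_j:=T^{x_0,r_j}_\#|\mu|/|\mu|(B^\GG(x_0,r_j))$, which are positive. Each $\sigma_j$ satisfies $\sigma_j(B^\GG_1)=1$, and since left translations and dilations preserve the class of homogeneous balls, $\sigma_j(B^\GG(0,R))=|\mu|(B^\GG(x_0,Rr_j))/|\mu|(B^\GG(x_0,r_j))$.

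\textbf{Step 2: the main obstacle, local boundedness.} This is where the proof can fail, because the ratio above need not stay bounded in $j$. My first attempt is a density-type selection: discard the $|\mu|$-null set of points where $\limsup_{r\to0}|\mu|(B(x_0,2r))/|\mu|(B(x_0,r))=\infty$ for every sequence of scales. More robustly, I would run the differentiation argument of Appendix~\ref{app_Besicovitch} simultaneously for $f=g$ and for a doubling-type quantity, so that the chosen radii $r_j$ satisfy both $\fint_{B(x_0,r_j)}|g-g(x_0)|\,d|\mu|\to0$ and $\sup_j|\mu|(B(x_0,Rr_j))/|\mu|(B(x_0,r_j))<\infty$ for each $R$. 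Where doubling fails outright, I would instead normalize the blow-up by $|\mu|(B(x_0,Rr_j))$ and use a diagonal argument in $R$, following the standard Preiss-type lemma that doubling radii exist at a.e.\ point, namely a Vitali-covering argument on the set where all small radii fail to be doubling.

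\textbf{Step 3: compactness and non-triviality.} Once local boundedness holds, weak-* compactness yields a subsequence $\sigma_j\overset{*}{\rightharpoonup}\sigma$. The bound $|\nu_j-g(x_0)\sigma_j|(B_R)\lesssim \fint_{B(x_0,Rr_j)}|g-g(x_0)|\,d|\mu|\to0$ then gives $\nu_j\overset{*}{\rightharpoonup} g(x_0)\sigma\in\operatorname{Tan}(\mu,x_0)$. For non-triviality, use $\sigma_j(\overline{B^\GG_1})\ge1$, so upper semicontinuity on compact sets gives $\sigma(\overline{B_1})\ge1$ and hence $\sigma\neq0$. To ensure $0\in\supp\sigma$, also require $|\mu|(B(x_0,\varepsilon r_j))\ge c(\varepsilon)|\mu|(B(x_0,r_j))$, which is again a doubling-type condition. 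If that is unavailable, re-center: take any point $y\in\supp\sigma$ and blow up once more. A tangent measure of a tangent measure is again tangent at a.e.\ point; proving this in the Carnot setting is itself the delicate part, so I would prefer securing doubling radii in Step 2.
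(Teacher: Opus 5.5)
\textbf{Gap: Step~2 is never carried out.} Your Step~2 is where the proposition is actually proved, and it is left as a list of options. None of them produces what Step~3 needs, namely a \emph{single} sequence $r_j\downarrow0$ with
\[
\sup_j\frac{|\mu|(B^\GG(x_0,Rr_j))}{|\mu|(B^\GG(x_0,r_j))}<\infty\qquad\text{for every }R\in\NN .
\]
\begin{itemize}
\item Discarding points where the ratio at factor $2$ blows up along every sequence, or running a covering argument on the set where all small radii fail to be doubling (essentially Lemma~\ref{weak doubling lemma}), only yields a sequence that is doubling for \emph{one fixed} ratio. That sequence depends on the ratio. Diagonalizing over $R$ destroys uniformity: along the diagonal you only get $\sigma_j(B^\GG_R)\le C_j$, with possibly $C_j\to\infty$.
\item Normalizing by $|\mu|(B^\GG(x_0,Rr_j))$ controls only the mass in $B^\GG_R$. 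The normalizations for different $R$ are incomparable, and the limit need be neither locally finite nor non-zero.
\item The differentiation argument of Appendix~\ref{app_Besicovitch} cannot generate doubling radii. It presupposes them: the bound $\nu(5B)\le j\,\nu(B)$ is exactly what the $5r$-covering estimate uses.
\end{itemize}

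\textbf{The missing idea is the paper's scale-uniform averaging estimate.} By Fubini and $\Leb[d](B^\GG(x,r))=\omega_\GG r^Q$,
\[
|\mu|(A)=(\omega_\GG r^Q)^{-1}\int_\GG|\mu|(A\cap B^\GG(x,r))\,dx .
\]
Comparing balls centred at nearby points then gives, for every $t>1$ and \emph{every} $r>0$,
\[
|\mu|\bigl(\{x\in K:\ |\mu|(B^\GG(x,tr))\ge\beta\,|\mu|(B^\GG(x,r))\}\bigr)\le\frac{(2(t+1))^Q}{\beta}\,|\mu|(K).
\]
Choose $\beta_k$ so that the bad set for ratio $k$ has mass at most $\varepsilon 2^{-k}$.
\begin{itemize}
\item For each $j$, the points that are bad for some $k$ at scale $1/j$ then have mass at most $\varepsilon$.
\item Hence the points that are bad at all but finitely many scales have mass at most $\varepsilon$.
\item Letting $\varepsilon\to0$, $|\mu|$-a.e.\ point is, for suitable constants $\beta_k$, good for all ratios simultaneously at infinitely many scales.
\end{itemize}
Uniformity in $r$ and summability in $k$ are exactly what your pointwise covering arguments do not provide.

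The same estimate also settles $0\in\supp\sigma$. Use instead the bad sets $\{|\mu|(B^\GG(x,kr))\ge\beta_k|\mu|(B^\GG(x,r/k))\}$ (scale $r/k$, ratio $k^2$). At good scales this gives both the upper bounds and $|\mu|(B^\GG(x_0,r_j/k))\ge\beta_k^{-1}|\mu|(B^\GG(x_0,r_j))$. So you do not need ``tangents of tangents''.

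\textbf{A second gap: the two sequences must coincide.} Step~3 requires $\fint_{B^\GG(x_0,Rr_j)}|g-g(x_0)|\,d|\mu|\to0$ for every $R$, along this \emph{same} doubling sequence. Step~1 instead supplies a sequence of its own, and only for $R=1$. This is fixable: select the doubling sequence first, then rerun the maximal-function argument of Appendix~\ref{app_Besicovitch} along the radii $Rr_j$, with covering constant $\beta_{5R}$. But it has to be done, not assumed. The paper's reduction to positive measures via Lemma~\ref{lemma tangent of the total variation} is equally brief on this point.
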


    \subsection{Proof of Theorem~\ref{main theorem}}\label{subsec_proof}
    We can now prove our main result. The proof largely follows the strategy and the notation of \cite{DPR}. Nonetheless we present it in full detail for the convenience of the reader.
    
    \begin{proof}[Proof of Theorem~\ref{main theorem}]
        We argue by contradiction, assuming that the set
        \[
        E := \left\{ x \in \Omega : \frac{d\mu}{d|\mu|}(x) \not\in \Lambda_{\mathscr{A}}^H \right\},
        \]
        is not $|\mu|^s$-negligible\footnote{The existence of $\frac{d\mu}{d|\mu|}(x)$ is part of the definition of $E$.}, or $|\mu|^s(E) > 0$. Then, thanks to Theorem \ref{weak besicovitch thm Carnot} and Proposition \ref{prop existence of tangent measures} we may find a point $x_0 \in E$ and a sequence $r_j = r_j^{x_0} \downarrow 0$ such that:
        \begin{enumerate}[label={(\textit{\roman*})}]
            \item $\displaystyle \lim_{j \rightarrow \infty} \frac{|\mu|^a(B^\GG_{r_j}(x_0))}{|\mu|^s(B^\GG_{r_j}(x_0))} = 0$ \ and
            \[\lim_{j \rightarrow \infty} \fint_{B^\GG_{r_j}(x_0)} \left| \frac{d\mu}{d|\mu|}(x) - \frac{d\mu}{d|\mu|}(x_0) \right| \ d|\mu|^s(x) = 0;
            \]
            
            \item there exists a non-zero $\nu \in \text{Tan}(|\mu|^s, x_0)$ on $B^\GG_{1/2}$, of the form
            \[
            \nu_j := \frac{T^{x_0,r_j}_\# |\mu|^s}{|\mu|^s(B^\GG_{r_j}(x_0))} \stackrel{*}{\rightharpoonup} \nu;
            \]
            
            \item $\displaystyle \lambda = \frac{d\mu}{d|\mu|}(x_0) \not\in \Lambda^H_{\mathscr{A}}$. In particular, using the notation above, the operator $\mathcal{B}^*_\lambda \mathcal{B}_\lambda$ is hypoelliptic.
        \end{enumerate}
        
        The contradiction arises from the following claims:
        \begin{gather}
            \label{claim 1 main theorem}
            0 \not= \nu \llcorner B^\GG_{1/2} \ll \Leb[d],\\
            \label{claim 2 main theorem}
            \lim_{j \rightarrow \infty} |\nu_j - \nu|(B^\GG_{1/2}) = 0.
        \end{gather}
        Indeed, each $\nu_j$ is singular w.r.t. $\Leb[d]$, hence there exist Borel sets $E_j \subseteq B^\GG_{1/2}$ such that $\Leb[d](E_j) = 0 = \nu(E_j)$ and $\nu_j(E_j) = \nu_j(B^\GG_{1/2})$. Thus by (\ref{claim 2 main theorem})
        \[
        \nu_j(B^\GG_{1/2}) = \nu_j(E_j) \leq |\nu_j - \nu|(B^\GG_{1/2}) + \nu(E_j) = |\nu_j - \nu|(B^\GG_{1/2}) \rightarrow 0,
        \]
        hence $\nu(B^\GG_{1/2}) = 0$, which clearly contradicts (\ref{claim 1 main theorem}). Thus $|\mu|^s(E) = 0$ and the assertion is proven. \\
        
        \emph{Step 1: Mollification and localization}. Assume $x_0 = 0$ without loss of generality, and let $T^r := T^{x_0,r}$. Denoting by $\mathscr{A}^h$ the $h$-homogeneous part of $\mathscr{A}$, for any $\varphi \in \mathscr{D}(\Omega; \RR^n)$ we have
        \begin{align*}
            \langle \mathscr{A}^h \left( T^r_\# \mu \right) , \varphi \rangle &= \int_{\RR^d} \sum_{|\alpha| = h} A_\alpha^t (X^\alpha)^* \varphi(x) \ d\left(T^r_\# \mu\right)(x) \\
            &= \int_{\RR^d} \sum_{|\alpha| = h} A_\alpha^t (X^\alpha)^* \varphi(\delta_{r^{-1}}y) \ d\mu(y) \\
            &= \int_{\RR^d} r^h \sum_{|\alpha| = h} A_\alpha^t (X^\alpha)^* \varphi_r(y) \ d\mu(y) \\
            &= \langle \mathscr{A}^h (r^h \mu) , \varphi_r \rangle,
        \end{align*}
        where $\varphi_r(x) := \varphi(\delta_{1/r}x)$. Hence, summing over $h$,
        \begin{align*}
            \mathscr{A}^k \left( T^r_\# \mu \right) + \sum_{h=1}^{k-1} \mathscr{A}^h \left( r^{k-h} T^r_\# \mu \right) =  \sum_{h=1}^k \mathscr{A}^h \mu = 0.
        \end{align*}
        Calling $c_j = |\mu|^s(B^\GG_{r_j})^{-1}$, we may write
        \begin{equation}
            \label{first calculation main theorem}
            \mathscr{A}^k (\lambda \nu_j) = \mathscr{A}^k (\lambda \nu_j - c_j T^{r_j}_\# \mu) - \sum_{h=1}^{k-1} \mathscr{A}^h \left( r_j^{k-h} c_j T^{r_j}_\# \mu \right).
        \end{equation}
        
        Now consider an approximation of the identity $\varphi_\varepsilon$ as $\varepsilon > 0$ varies. By weak-* lower semicontinuity of the total variation
        \[
        |\nu_j - \nu|(B^\GG_{1/2}) \leq \liminf_{\varepsilon \rightarrow 0} |\nu_j * \varphi_\varepsilon - \nu|(B^\GG_{1/2}),
        \]
        thus for every $j \geq 1$ we can choose $\varepsilon_j \leq 1/j$ such that
        \begin{equation}
            \label{estimate on |nu_j - nu|}
            |\nu_j - \nu|(B^\GG_{1/2}) \leq |\nu_j * \varphi_{\varepsilon_j} - \nu|(B^\GG_{1/2}) + 1/j.
        \end{equation}
        Convolving (\ref{first calculation main theorem}) with $\varphi_{\varepsilon_j}$, we get
        \begin{equation}
            \label{second calculation main theorem}
            \mathscr{A}^k(\lambda u_j) = \mathscr{A}^k(V_j) - \sum_{h=1}^{k-1} \mathscr{A}^h \left( r_j^{k-h} c_j \left(T^{r_j}_\# \mu\right) * \varphi_{\varepsilon_j} \right),
        \end{equation}
        where $u_j := \nu_j * \varphi_{\varepsilon_j}$ and $V_j := (\lambda \nu_j -  c_j T^{r_j}_\# \mu) * \varphi_{\varepsilon_j}$ are smooth functions, $u_j \geq 0$ and\footnote{Viewing $u_j$ as a measure, i.e. identified with $u_j \Leb[d]$.}
        \begin{equation}
            \label{weak-* convergence of u_j}
            u_j \stackrel{*}{\rightharpoonup} \nu.
        \end{equation}
        
        By definition of $V_j$, $\lambda$, $\nu_j$ and $c_j$ and by classical convolution properties (e.g. see \cite[Theorem 2.2]{ambrosio-fusco-pallara} and \cite[Proposition 1.20]{Folland-Stein}), choosing $j$ large enough so that $\varepsilon_j \leq 1/4$ we notice that
        \begin{align*}
            \int_{B^\GG_{3/4}} |V_j| \ d\Leb[d](x) &\leq \frac{\left| \lambda T^{r_j}_\# |\mu|^s - T^{r_j}_\# \mu \right|(B^\GG_1)}{|\mu|^s(B^\GG_{r_j})} \\
            &= \frac{\left| \lambda |\mu|^s - (\mu^a + \mu^s) \right|(B^\GG_{r_j})}{|\mu|^s(B^\GG_{r_j})} \\
            &\leq \frac{\left| \lambda |\mu|^s - \mu^s \right|(B^\GG_{r_j})}{|\mu|^s(B^\GG_{r_j})} + \frac{|\mu|^a(B^\GG_{r_j})}{|\mu|^s(B^\GG_{r_j})} \\
            &= \fint_{B^\GG_{r_j}} \left| \frac{d\mu}{d|\mu|}(0) - \frac{d\mu}{d|\mu|}(x) \right| \ d|\mu|^s(x) + \frac{|\mu|^a(B^\GG_{r_j})}{|\mu|^s(B^\GG_{r_j})}.
        \end{align*}
        Therefore, $(i)$ yields
        \begin{equation}
            \label{limit L^1 norm of V_j}
            \lim_{j \rightarrow \infty} \int_{B^\GG_{3/4}} |V_j| \ d\Leb[d](x) = 0.
        \end{equation}
        
        Now localize via a positive cut-off function $\eta \in \mathscr{D}(B^\GG_{3/4})$ with $\chi_{B^\GG_{1/2}} \leq \eta \leq \chi_{B^\GG_{3/4}}$, writing \eqref{second calculation main theorem} as
        \begin{equation}
            \label{third calculation main thm}
            \mathscr{A}^k(\lambda \eta u_j) = \mathscr{A}^k(\eta V_j) + R_j,
        \end{equation}
        where the remainder term
        \[
        R_j(x) = \sum_{|\alpha| \leq k-1} b_\alpha(x) X^\alpha z^\alpha_j(x)
        \]
        is a finite sum of left-invariant smooth-coefficient (i.e. $b_\alpha \in \mathscr{D}(B^\GG_{3/4})$) differential operators of order at most $k-1$ applied to smooth and compactly supported functions $z^\alpha_j$ which satisfy $\sup_j \norm[z^\alpha_j][L^1(B^\GG_{3/4})] < \infty$, thanks to \eqref{weak-* convergence of u_j} and \eqref{limit L^1 norm of V_j}: more precisely\footnote{The first two terms come from Leibniz's rule, while the third one comes from the last term in \eqref{second calculation main theorem}.}
        \begin{align*}
            R_j = &\sum_{|\alpha| = k} \sum_{\underset{|\gamma| \geq 1}{\beta+\gamma = \alpha}} c_{\beta,\gamma} X^\gamma \eta X^\beta (A_\alpha \lambda \widetilde{\eta} u_j) \\
            + &\sum_{|\alpha| = k} \sum_{\underset{|\gamma| \geq 1}{\beta+\gamma = \alpha}} c_{\beta,\gamma} X^\gamma \eta X^\beta (A_\alpha \widetilde{\eta} V_j) \\
            + &\sum_{h=0}^{k-1} \sum_{|\alpha| = h} \eta X^\alpha (\widetilde{\eta} A_\alpha (r^{k-h} c_j T^{r_j}_\# \mu)*\varphi_{\varepsilon_j}),
        \end{align*}
        where $\widetilde{\eta}$ is another positive cut-off function such that $\chi_{\supp \eta} \leq \widetilde{\eta} \leq \chi_{B^\GG_1}$ to ensure that the $z^\alpha_j$'s are compactly supported. \\
        
        \emph{Step 2: Inverting through hypoellipticity}. To get meaningful information on $\nu_j$, we invert \eqref{third calculation main thm} exploiting $(iii)$. Applying $\mathcal{B}^*_\lambda$ on both sides we obtain
        \[
        \mathcal{B}^*_\lambda \mathcal{B}_\lambda(\eta u_j) = \mathcal{B}^*_\lambda \mathscr{A}^k(\eta V_j) + \mathcal{B}^*_\lambda R_j
        \]
        Thanks to Lemma \ref{lemma fundamental solution}, $\mathcal{B}^*_\lambda \mathcal{B}_\lambda$ admits a fundamental solution $K \in \mathscr{S}'(\GG)$, so that
        \begin{align*}
            \eta u_j = \left[ \mathcal{B}^*_\lambda \mathscr{A}^k(\eta V_j) + \mathcal{B}^*_\lambda R_j \right] * K &=: T_V(\eta V_j) + T_R(R_j) \\
            &=: f_j + g_j,
        \end{align*}
        and, since $\mathscr{A}^k$ and $\mathcal{B}_\lambda$ are homogeneous of the same order, Lemma \ref{lemma singular integrals} gives
        \[
        \norm[f_j][L^{1,\infty}] \lesssim \norm[V_j][L^1(B^\GG_{3/4})].
        \]
        In particular, \eqref{limit L^1 norm of V_j} yields
        \begin{gather}
            \label{f_j to zero in measure}
            f_j \rightarrow 0 \quad \text{in } \Leb[d]\text{-measure}; \\
            \label{f_j to zero in distribution}
            f_j \overset{*}\rightharpoonup 0 \quad \text{in } \mathscr{D}'(\GG),
        \end{gather}
        since $\langle f_j, \varphi \rangle = \langle \eta V_j, T_V^*(\varphi) \rangle \rightarrow 0$ for any $\varphi \in \mathscr{D}(\GG)$.
        
        To conclude the argument, we claim that
        \begin{equation}
            \label{precompactness of g_j}
            \{g_j\}_j \text{ is precompact in } L^1_\loc(\GG)
        \end{equation}
        and in particular it is equi-integrable. If so, $\eta u_j \geq 0$ yields
        \[
        f_j^- := \max\{0,-f_j\} \leq |g_j|
        \]
        and $\{f_j^-\}_j$ is equi-integrable too. Together with \eqref{f_j to zero in measure}, \eqref{f_j to zero in distribution} this implies that $f_j \rightarrow 0$ in $L^1_\loc(\GG)$ by Lemma \ref{corollary Vitali conv thm}. This implies the precompactness of $\{\eta u_j\}_j$ in $L^1_\loc(\GG)$, which means that it converges up to an (unlabeled) subsequence:  then, \eqref{weak-* convergence of u_j} implies that
        \[
        \eta \nu \in L^1(\GG) \quad \text{and} \quad \eta u_j \rightarrow \eta \nu \text{ in } L^1(\GG),
        \]
        which implies both claims \eqref{claim 1 main theorem}, \eqref{claim 2 main theorem} if one recalls \eqref{estimate on |nu_j - nu|}. This would conclude the proof provided that claim \eqref{precompactness of g_j} holds.
        
        \emph{Step 3: Precompactness}. Let us prove the claim: write $g_j$ as a finite sum of terms of the form
        \[
        g_j^\alpha = T_R \left( b_\alpha X^\alpha z^\alpha_j \right) = Q \circ (1 + \mathcal{R})^{-\frac{k}{\theta}} \circ P_\alpha \circ (1+\mathcal{R})^{\frac{|\alpha|-k}{\theta}} (z^\alpha_j)
        \]
        over $|\alpha| \leq k-1$. Note that $\mathcal{R}$ denotes the positive $\theta$-homogeneous Rockland operator of $\GG$, and  $(1 + \mathcal{R})^{-\frac{\beta}{\theta}}$ are Bessel potential operators, for $\beta > 0$ as in Definition \ref{definition bessel potentials}. Also, $Q$ and $P_\alpha$ are the pseudo-differential operators (see \cite[§5.1, §5.2]{Fischer-Ruzhansky}) of symbols, respectively,
        \begin{align*}
            \sigma_Q(\Pi) &= \Pi\left(\left[\mathcal{B}^*_\lambda \mathcal{B}_\lambda\right]^{-1}\right) \Pi(\mathcal{B}^*_\lambda) \Pi(1 + \mathcal{R})^{\frac{k}{\theta}} \quad \text{and} \\
            \sigma_{P_\alpha}(x,\Pi) &= b_\alpha(x) \Pi(X^\alpha) \Pi(1 + \mathcal{R})^{\frac{k-|\alpha|}{\theta}}
        \end{align*}
        for any $\Pi \in \widehat{\GG}$. Here $\left[\mathcal{B}^*_\lambda \mathcal{B}_\lambda\right]^{-1}$ denotes the right-convolution operator of kernel $K$, and
        \[
        \Pi(1 + \mathcal{R})^{\frac{\beta}{\theta}} := \Pi\left((1 + \mathcal{R})^{\frac{-\beta}{\theta}}\right)^{-1}
        \]
        are defined for $\beta > 0$ as in \cite[Lemma 5.1.2]{Fischer-Ruzhansky}.
        
        Now notice that $Q$ and $(1 + \mathcal{R})^{-\frac{k}{\theta}} \circ P_\alpha$ are pseudo-differential operators of order $0$, and therefore they extend continuously from $L^p(\GG)$ to itself for any $1 < p < \infty$: see \cite[Corollary 5.4.20]{Fischer-Ruzhansky}. On the other hand, Lemma \ref{lemma bessel potentials} ensures that $(1+\mathcal{R})^{\frac{|\alpha|-k}{\theta}}$ is a compact operator from $L^1_c(B^\GG_1)$ to $L^p(\GG)$ for any $1 \leq p < p(k-|\alpha|, Q)$. Thus, by uniform $L^1$-boundedness of $\{z^\alpha_j\}_j$ we conclude that the family $\{g_j^\alpha\}_j$ is precompact in $L^1_\loc(\GG)$.
    \end{proof}
    
\section{Proof of the Rank-one Theorem}\label{sec_rk1}
    We use the same notation introduced in Section~\ref{subsec_Carnot}; in particular, $\GG$ is a Carnot group of rank $\rho$ and topological dimension $d$. Given an open set $\Omega \subseteq \GG$ and a measurable function $u:\Omega\to\RR$, we denote by $D_H u=(X_1u,\dots,X_\rho u)$ its horizontal derivatives in the sense of distributions. We say that $u \in L^1_\loc(\Omega)$ is a function of \emph{locally bounded horizontal variation} in $\Omega$, or $u \in BV_{H,\loc}(\Omega)$, if i $D_H u$ is a vector-valued Radon measure. In this case
    \[
    \int_\Omega \varphi \ dD_Hu = - \int_\Omega u \diverg_H \varphi \ d\Leb[d] \qquad \text{for all } \varphi \in C^1_c(\Omega; \RR^\rho),
    \]
    where $\diverg_H\varphi := \sum_{i=1}^\rho X_i\varphi_i$ and we are implicitly using the fact that each $X_i$ is self-adjoint.
    If, in addition, $u \in L^1(\Omega)$ and $D_Hu$ has finite total variation on $\Omega$, we say that $u$ is of \emph{bounded horizontal variation} in $\Omega$, or $u \in BV_H(\Omega)$.

In view of Theorem~\ref{rank-one thm} we consider a vector-valued $BV_{H,loc}$ function, i.e., a map $u=(u_1,\dots,u_h):\Omega\to\RR^h$ such that $u_i\in BV_{H,\loc}(\Omega)$ for every $i=1,\dots,h$ or, equivalently, such that $D_H u$ is a $(h\times\rho)$-matrix-valued Radon measure. We will exploit Theorem \ref{main theorem} by finding appropriate necessary differential conditions for horizontal gradients.

    \subsection{Necessary constraints for horizontal gradients} \label{necessary constraints}
    Let $\GG$ be a Car\-not group of rank $\rho$ and step $s$. As usual, we fix an open subset $\Omega \subseteq \GG$. To understand which differential constraints are satisfied by horizontal gradients of $BV_H$ functions we exploit the algebraic structure of $\g$: let $X_1, \ldots, X_\rho$ be a basis for the first step of this algebra. For every multi-index $I=(i_1,\dots,i_z) \in \{1,\ldots,\rho\}^z$ of length $z$ we are going to define an operator $X_I$: this is done by induction on $z=|I|$. Clearly, if $z=1$ we set $X_I=X_{i_1}$, while for each $(z+1)$ multi-index $(i,I)=(i,i_1,\dots, i_z)$ we set
    \[
    X_{(i,I)}:=[X_{i},X_{I}]\,.
    \]
            By definition, $X_I = 0$ whenever $|I|=s+1$. This leads us to construct the family of differential operators\footnote{Throughout this chapter, the superscripts $z$ in $\mathscr{A}^z$ are purely notational, and they do not indicate the $z$-homogeneous part, as introduced in §\ref{sec_mainresult}, of some other operator $\mathscr{A}$. This nonetheless creates no contradictions, because the operators $\mathscr{A}^z$ we are constructing are themselves $z$-homogeneous.}
    \[
    \mathscr{A}^z : \mathscr{D}'(\Omega; \RR^{h \times \rho}) \rightarrow \mathscr{D}'(\Omega; \RR^{h \times \rho^{z+1}}),\qquad z\geq 1
    \]
    by setting iteratively, for every $w=(w^1, \ldots, w^\rho)\in \mathscr{D}'(\Omega; \RR^{h \times \rho})$, $j=1,\ldots,h$ (indicating the row), $i,k = 1, \ldots, \rho$ and $|I|=z+1$,
    \begin{align*}
    \mathscr{A}^1(w)^j_{(i,k)} &:= -X_k w^j_i + X_i w^j_k,\\
        \mathscr{A}^{z+1}(w)^j_{(i,I)} &:= X_i \mathscr{A}^z(w)^j_I - X_I w^j_i.
    \end{align*}
    Notice that, for any $\RR^h$-valued $BV_{H,\loc}$ function $u$ on $\Omega$, by definition we have $\mathscr{A}^1(D_H u)^j_{(i,k)} = X_{(i,k)} u$ and consequently 
    \[
    \mathscr{A}^{z+1}(D_H u)^j_{(i,I)} = X_iX_I u - X_IX_i u = X_{(i,I)} u
    \]
    for any $z \geq 1$. When $z=s$ this implies $\mathscr{A}^s(D_H u) = 0$,  which is the desired differential constraint:
    
    \begin{proposition}
        Let $(\mathscr{A}^z)_{z\geq 1}$ be the family of homogeneous left-invariant differential operators introduced above. Then for any $\RR^h$-valued $BV_{H,\loc}$ function $u$ on $\Omega$, $D_H u$ is an $\mathscr{A}^s$-free measure.
    \end{proposition}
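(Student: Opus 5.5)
The plan is an induction on $z$: we prove that, in the sense of distributions on $\Omega$,
\[
\mathscr{A}^z(D_H u)^j_I = X_I u_j \qquad \text{for every } z\ge 1,\ |I|=z+1,\ j=1,\dots,h.
\]
Here $u_j\in L^1_\loc(\Omega)$ is regarded as a distribution, and $D_Hu$ is the matrix-valued measure viewed as a distribution of order $0$. Once this is established, the case $z=s$ finishes the proof. Indeed, every $I$ with $|I|=s+1$ gives $X_I=[X_{i_1},X_{(i_2,\dots)}]$, a bracket of an element of $\g_1$ with an element of $\g_s$. Since $[\g_s,\g]=\{0\}$, we get $X_I=0$, hence $\mathscr{A}^s(D_Hu)=0$ in $\mathscr{D}'(\Omega;\RR^{h\times\rho^{s+1}})$. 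This is exactly the statement that $D_Hu$ is $\mathscr{A}^s$-free in the sense of Definition~\ref{def_Afree}.

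First we record that the identities make sense distributionally. Left-invariant vector fields act on $\mathscr{D}'(\Omega)$ by duality, $\langle X T,\varphi\rangle=-\langle T, X\varphi\rangle$, because each $X\in\g$ is divergence-free with respect to the Haar measure $\Leb[d]$. Consequently, for $X,Y\in\g$ and $T\in\mathscr D'(\Omega)$,
\[
XYT-YXT=[X,Y]T.
\]
This follows by testing against $\varphi$ and using the identity $XY\varphi-YX\varphi=[X,Y]\varphi$ for smooth $\varphi$. By definition of $BV_{H,\loc}$, the $(j,i)$ entry of the measure $D_Hu$ coincides, as a distribution, with $X_iu_j$. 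This identification is the only input from the $BV$ assumption.

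Base case $z=1$: we have $\mathscr{A}^1(D_Hu)^j_{(i,k)}=-X_kX_iu_j+X_iX_ku_j$, which equals $[X_i,X_k]u_j=X_{(i,k)}u_j$ by the commutator identity above.

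Inductive step: assume $\mathscr{A}^z(D_Hu)^j_I=X_Iu_j$ for all $|I|=z+1$. Then
\[
\mathscr{A}^{z+1}(D_Hu)^j_{(i,I)}=X_iX_Iu_j-X_IX_iu_j=[X_i,X_I]u_j=X_{(i,I)}u_j.
\]
Here $X_I\in\g$ is itself a left-invariant vector field, so the commutator identity applies again.

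There is no real obstacle in this argument. The only point requiring care is justifying that commutators of distributional derivatives behave as in the smooth case, including for the higher brackets $X_I$, which are left-invariant vector fields but not horizontal ones. This is handled by the duality argument above, since every element of $\g$ has vanishing divergence with respect to $\Leb[d]$.
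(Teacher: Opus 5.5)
Your proof is correct and is essentially the paper's own argument: the paper likewise notes that $\mathscr{A}^{z}(D_Hu)^j_I = X_I u$ for $|I|=z+1$, using $X_iX_I u - X_IX_i u = X_{(i,I)}u$ inductively, and concludes from $X_I=0$ when $|I|=s+1$. Your duality justification that commutators of left-invariant fields act on distributions as in the smooth case is a detail the paper leaves implicit.
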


    The proof of Theorem ~\ref{rank-one thm} relies on the hypoellipticity properties of $\mathscr{A}^s$ that will be studied in §\ref{subsec_hypoell}; before passing to that a few comments are in order.

    \begin{remark}
        When $\GG$ has step 1, i.e. $\rho=d$ and $\GG=\RR^d$, the necessary condition that $D_Hu=Du$ (i.e., the classical distributional derivatives of $u$) is an $\mathscr A^1$-free measure is the classical curl-free condition
        \[
            \mathscr{A}^1(Du)^j_{(i,k)} =  -X_kX_iu^j+X_iX_ku^j=0,\quad i,k=1\dots,d,j=1,\dots,h,
        \]
        since in this case one can choose the standard basis $X_i=\partial_{x_i}$ for every $i=1,\dots,d$.
    \end{remark}
        
    \begin{remark}\label{rem_rumin}
        Let us consider the case $\GG=\HH^1$: here, derivatives of $BV_H$ are $\mathscr A^2$-free and, using the more standard notation $X,Y$ in place of (respectively) $X_1,X_2$, it can be checked that
        \begin{align*}
        &\mathscr{A}^2(w)^j_{(1,1,1)}=\mathscr{A}^2(w)^j_{(1,2,2)}=\mathscr{A}^2(w)^j_{(2,1,1)}=\mathscr{A}^2(w)^j_{(2,2,2)}=0,
            \\
            &\mathscr{A}^2(w)^j_{(1,1,2)}=-\mathscr{A}^2(w)^j_{(1,2,1)}=( - 2XY+YX)w_1^j+XXw_2^j,\\
            &\mathscr{A}^2(w)^j_{(2,1,2)}=-\mathscr{A}^2(w)^j_{(2,2,1)}=-YYw_1^j+(2YX-XY)w^j_2.
        \end{align*}
        It follows that, apart from several redundancies, the operator $\mathscr A^2$ is the Heisenberg curl operator that can be introduced in terms of {\em Rumin's differential complex}, see e.g. \cite[Example 3.11]{baldi_franchi}.
        
        On the contrary, when $n\geq 2$ the operator $\mathscr A^2$ in $\HH^n$ is different from Rumin's Heisenberg curl: in fact, the latter is here a first-order operator, while $\mathscr A^2$ is a second-order one. We will introduce in Remark~\ref{rem_altrecondnecinH^2} alternative necessary conditions for derivatives of $BV_H$ functions that can be used to prove Theorem~\ref{rank-one thm} in $\HH^n$, $n\geq 2$, via Theorem~\ref{main theorem}; these necessary conditions will be provided by an operator $\mathscr R$ that, again, is in correspondence with Rumin's differential operator in $\HH^n$.
    \end{remark}

    \begin{remark}\label{rem_dueconiHeisenberg}
        In order to prove Theorem~\ref{rank-one thm} in the first Heisenberg group $\HH^1$ one cannot make use of the  (Euclidean) structure result for $\mathscr{A}^2$-free measures in \cite{DPR}: it was in fact shown in \cite[Remark~5.6]{don-massaccesi-vittone} that the wave cone of $\mathscr{A}^2$  in $\HH^1$ is the full space $\RR^{h \times 2}$.

        As already mentioned, we will introduce later (see Remark \ref{rem_altrecondnecinH^2}) an operator $\mathscr R$ providing other necessary conditions for horizontal gradients in $\HH^n$, $n\geq 2$. Again, Theorem~\ref{rank-one thm} in  $\HH^n$ does not follow from \cite{DPR} (applied to $\mathscr R$) because  the wave cone of $\mathscr R$  is  the full space $\RR^{h \times 2n}$. See \cite[Remark~5.5]{don-massaccesi-vittone}.
    \end{remark}

    \subsection{Hypoellipticity of the necessary conditions}\label{subsec_hypoell}
    
    We can now use the results and notation of Section~\ref{sec_mainresult}, with $k=s$, $m=h\rho$ (identifying elements of $\RR^{h\rho}$ with matrices in $\RR^{h\times\rho}$) and $n=h\rho^{s+1}$. Following Definition \ref{definition hypoelliptic wavecone}, for every $\lambda \in \RR^{h \times \rho}$ we consider the operator $\mathcal{B}^z_\lambda : \mathscr{D}'(\Omega) \rightarrow \mathscr{D}'(\Omega; \RR^{h\times \rho^{z+1}})$ defined by
    \[
    \mathcal{B}^z_\lambda(\varphi) := \mathscr{A}^z(\lambda \varphi).
    \]
    and its adjoint  $(\mathcal{B}^z_\lambda)^*$ defined by duality as
    \[
    \langle (\mathcal{B}^z_\lambda)^*(\xi), \varphi \rangle = \langle \xi, \mathcal{B}^z_\lambda(\varphi) \rangle,\qquad \xi \in \mathscr{D}(\Omega; \RR^{h\times \rho^{z+1}}),
    \]
    where here $\langle\cdot,\cdot\rangle$ is the standard scalar product on $L^2(\Omega)$ (on the left-hand side) and on $L^2(\Omega;\RR^{h\times \rho^{z+1}})$ (on the right-hand side).

    \begin{proposition}
        \label{hypoellipticity}
        For any $z \geq 1$, $(\mathcal{B}^z_\lambda)^* \mathcal{B}^z_\lambda$ is hypoelliptic if and only if $\lambda$ is of rank at least $2$. In other words,
        \[
            \Lambda^H_{\mathscr{A}^z} = \{\lambda \in \RR^{h \times \rho} : \operatorname{rank}(\lambda) \leq 1\}.
        \]
    \end{proposition}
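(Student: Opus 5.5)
The plan is to reduce everything, via Lemma~\ref{lemma equivalence hypoelliptic wave cone} (that is, Theorem~\ref{thm_rockland}), to an injectivity question in irreducible unitary representations. For $\Pi\in\widehat{\GG}$ nontrivial, the formal transpose corresponds to the Hilbert adjoint, because each $\Pi(X_i)$ is skew-adjoint on $H_\infty$. Hence for $v\in H_\infty$
\[
\langle \Pi((\mathcal B^z_\lambda)^*\mathcal B^z_\lambda)v,v\rangle=\|\Pi(\mathcal B^z_\lambda)v\|^2 ,
\]
so $\ker\Pi((\mathcal B^z_\lambda)^*\mathcal B^z_\lambda)=\ker \Pi(\mathcal B^z_\lambda)$. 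Write $\lambda^j\in\RR^\rho$ for the rows of $\lambda$ and $L_{\ell}:=\sum_i\ell_iX_i$. Then $\mathcal B^z_\lambda$ splits row by row, with the $j$-th block equal to $\mathscr A^z_1(\lambda^j\,\cdot)$, where $\mathscr A^z_1$ denotes the scalar ($h=1$) operator. Thus
\[
\ker\Pi(\mathcal B^z_\lambda)=\bigcap_j \ker \Pi\big(\mathscr A^z_1(\lambda^j\,\cdot)\big).
\]
I also note that a linear change of horizontal basis transforms $\mathscr A^z$ covariantly, so I may normalize coordinates freely.

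\emph{Rank $\le1$ lies in the cone.} If $\lambda=0$ the claim is trivial. Otherwise $\lambda^j=a_j\ell$ for a fixed covector $\ell\neq0$; after normalizing, $\ell=e_1$. I take the one-dimensional unitary character $\Pi_\xi(\exp X)=e^{i\xi(X)}$ with $\xi\in\g_1^*$, $\xi=e_1^*$, extended by $0$ on $\g_2\oplus\dots\oplus\g_s$. It is nontrivial and irreducible, and $\Pi_\xi(X_I)=0$ for $|I|\ge2$. Now $\Pi_\xi(\mathscr A^1_1(e_1\cdot))_{(i,k)}=i(\delta_{k1}\xi_i-\delta_{i1}\xi_k)=0$. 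Inductively, $\Pi_\xi(\mathscr A^{z+1}_1(e_1\cdot))_{(i,I)}=i\xi_i\,\Pi_\xi(\mathscr A^z_1)_I-\delta_{i1}\Pi_\xi(X_I)$. For $|I|\ge2$ both terms vanish; for $|I|=1$ this reduces to the previous computation. So $\Pi_\xi(\mathcal B^z_\lambda)$ vanishes on $\CC=H_\infty$, and $\lambda\in\Lambda^H_{\mathscr A^z}$.

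\emph{Rank $\ge2$ is outside the cone.} Here I aim for the following key claim: for every $\ell\neq0$, nontrivial $\Pi$ and $v\in H_\infty$,
\[
\Pi(\mathscr A^z_1(\ell\,\cdot))v=0\ \Longrightarrow\ \Pi(Y)v=0\ \text{ for all } Y\in\g_1 \text{ with } \ell(Y)=0 .
\]
Granting the claim, pick two linearly independent rows $\lambda^j,\lambda^{j'}$. Then $\ker\lambda^j+\ker\lambda^{j'}=\g_1$, so $\Pi(X)v=0$ for all $X\in\g_1$, hence for all $X\in\g$ by bracket generation. Therefore $\Pi(\exp X)v=v$ and $\CC v$ is an invariant subspace. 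By irreducibility $H=\CC v$ with $\Pi$ trivial, which is excluded, so $v=0$.

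For $z=1$ the claim is immediate: $\mathscr A^1_1(\ell\varphi)_{(i,k)}=(\ell_kX_i-\ell_iX_k)\varphi$, and these vector fields span $\ker\ell$.

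The main obstacle is the claim for $z\ge2$. There the recursion $\mathscr A^{z+1}_1(\ell\varphi)_{(i,I)}=X_i\,\mathscr A^z_1(\ell\varphi)_I-\ell_iX_I\varphi$ mixes in the commutators $X_I$, and I cannot simply pass from the vanishing of level $z+1$ to that of level $z$. My first attempt will be to normalize $\ell=e_1$ and exploit the components with $i\ge2$, where $\ell_i=0$: these give $\Pi(X_i)\,\Pi(\mathscr A^z_1(e_1\cdot))_Iv=0$ for all $i\ge2$ and all $I$. Combining this with the $i=1$ components and with $X_I=0$ for $|I|=s+1$, I hope to run a descending induction on $|I|$ that uses skew-adjointness (pairing with $\Pi(\mathscr A^z_1)_Iv$ to get sums of squares) to peel off one layer at a time and eventually reach the $z=1$ statement. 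If this bookkeeping fails, a fallback is to use the explicit Kirillov-type models of $\widehat{\GG}$ only for the case analysis: separate characters, where $\Pi(\g_{\ge2})=0$ and the computation is the one above, from representations that are nontrivial on higher layers.
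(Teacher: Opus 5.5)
\textbf{There is a genuine gap: the rank $\ge 2$ direction is unproved for $z\ge 2$.} The rest of your proposal is correct and follows the paper's skeleton:
\begin{itemize}
\item the reduction via Theorem~\ref{thm_rockland} to $\ker\Pi(\mathcal B^z_\lambda)=\bigcap_j\ker\Pi(\mathscr A^z_1(\lambda^j\,\cdot))$;
\item the rank $\le 1$ direction via a one-dimensional character, exactly as in the paper;
\item the case $z=1$;
\item the final step from your key claim to $v=0$. Here you use an invariant line $\CC v$ where the paper uses hypoellipticity of the sub-Laplacian; both work.
\end{itemize}
The gap is the key claim itself for $z\ge 2$, which is the entire content of that direction. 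You call it the main obstacle and offer only a hoped-for descending induction, plus a Kirillov-model fallback. Neither is carried out, so as written the proof stops there.

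\textbf{The missing idea.} You need to eliminate the commutator term $\ell_i X_I\varphi$ in the recursion. Your own observation already contains this: after normalizing $\ell=e_1$, the components whose first index is $i\ge 2$ carry no commutator term. Follow them along one specific index.
\begin{itemize}
\item The base case is $\mathscr A^1_1(e_1\varphi)_{(i,1)}=X_i\varphi$.
\item By induction, the component of $\mathscr A^z_1(e_1\varphi)$ indexed by $(i,\dots,i,1)$, with $z$ copies of $i$, equals $X_i^z\varphi$.
\item Your hypothesis therefore gives $\Pi(X_i)^z v=0$ for every $i\ge 2$.
\item Set $A:=\Pi(X_i)$, which is skew-symmetric on $H_\infty$. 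If $A^m v=0$ with $m\ge 2$, then $\|A^{m-1}v\|^2=\pm\langle A^{2m-2}v,v\rangle=0$, since $2m-2\ge m$. Descending gives $Av=0$.
\item Hence $\Pi(Y)v=0$ for all $Y\in\ker\ell=\Span\{X_2,\dots,X_\rho\}$, which is your claim.
\end{itemize}
The $i=1$ components, the vanishing $X_I=0$ for $|I|=s+1$, and any explicit models of $\widehat{\GG}$ are not needed.

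Your normalization requires the covariance of $\mathscr A^z$ under a change of horizontal basis $X'=MX$: the operator built from $X'$, applied to $Mw$, equals $M^{\otimes(z+1)}$ applied to $\mathscr A^z(w)$. This follows by induction from the multilinearity of brackets, but you asserted it without checking.

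The paper avoids normalizing. From $\Pi[X_{i_0}]\Pi[\mathcal B^z_{\lambda^j,I}]\psi=\lambda^j_{i_0}\Pi[X_I]\psi$ it forms $\lambda^j_{k_0}(\cdot)_{i_0}-\lambda^j_{i_0}(\cdot)_{k_0}$, which kills $\Pi[X_I]\psi$ and gives $\Pi[\mathcal B^1_{\lambda^j,(i_0,k_0)}]\Pi[\mathcal B^z_{\lambda^j,I}]\psi=0$. Iterating yields a power of $\Pi[\mathcal B^1_{\lambda^j,(i,k)}]$ annihilating $\psi$, and skew-adjointness then reduces to the case $z=1$.
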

    \begin{proof}
        Write $\lambda = (\lambda^1, \ldots, \lambda^h) \in \RR^{h \times \rho}$ and, for any $z \geq 1$, denote for the sake of convenience $\mathcal{B}^z_{\lambda^j,I} := (\mathcal{B}^z_\lambda)^j_{I}$ for any $j = 1, \ldots, h$ and $|I|=z+1$. Then we may write
        \begin{equation}
            \label{B^z sum decomposition}
            (\mathcal{B}^z_\lambda)^*\mathcal{B}^z_\lambda = \sum_{j=1}^h \sum_{|I|=z+1} (\mathcal{B}^z_{\lambda^j,I})^*\mathcal{B}^z_{\lambda^j,I}.
        \end{equation}
        By Theorem \ref{thm_rockland}, the hypoellipticity of $(\mathcal{B}^z_\lambda)^* \mathcal{B}^z_\lambda$ is equivalent to the fact that for any representation $\Pi \in \widehat{\GG}$ the linear map $\Pi[(\mathcal{B}^z_\lambda)^*\mathcal{B}^z_\lambda]$ is injective, i.e.,
        \begin{equation}
            \label{injec for z}
            \langle \Pi[(\mathcal{B}^z_\lambda)^*\mathcal{B}^z_\lambda] \psi, \psi \rangle \not= 0 \qquad \text{for any } C^\infty \text{-vector } \psi \not= 0.
        \end{equation}
        Notice that by \eqref{B^z sum decomposition}
        \begin{align}
            \label{B^z represented sum decomposition}
            \notag \langle \Pi[(\mathcal{B}^z_\lambda)^*\mathcal{B}^z_\lambda] \psi, \psi \rangle &= \sum_{j=1}^h \sum_{|I|=z+1} \langle \Pi[(\mathcal{B}^z_{\lambda^j,I})^*\mathcal{B}^z_{\lambda^j,I}] \psi, \psi \rangle \\
            &= \sum_{j=1}^h \sum_{|I|=z+1} \|\Pi[\mathcal{B}^z_{\lambda^j,I}] \psi\|^2,
        \end{align}
        which combined with \eqref{injec for z} implies that the hypoellipticity of $(\mathcal{B}^z_\lambda)^* \mathcal{B}^z_\lambda$ is equivalent to the following:
        \begin{equation}
            \label{eq_equivalent_hypoell_indices}
             \forall \Pi \in \widehat{\GG}, \psi \ C^\infty \text{-vector} \quad \exists j,I: \ ~ \Pi[\mathcal{B}^z_{\lambda^j,I}] \psi \not= 0.
        \end{equation}\medskip       
        
        Suppose first that $\lambda$ is of rank at least 2: we rely on ~\eqref{eq_equivalent_hypoell_indices} to show the hypoellipticity of $(\mathcal{B}^z_\lambda)^*\mathcal{B}^z_\lambda$. We first present the case in which $z=1$, and then argue by reduction to the base case.
        
        \textit{Base case}. If $z=1$, recall that for any $i,k = 1, \ldots, \rho$
        \[
        \mathcal{B}^1_{\lambda^j,(i,k)} = \lambda^j_k X_i - \lambda^j_i X_k.
        \]
        If, by contradiction, \eqref{eq_equivalent_hypoell_indices} does not hold for $z=1$, then there  exist a  representation $\Pi \in \widehat{\GG}$ and a $C^\infty$-vector $\psi \not= 0$ such that
        \begin{equation}\label{eq_treno}
        \lambda^j_k\Pi[ X_i ]\psi- \lambda^j_i\Pi[ X_k]\psi =
        \Pi[\lambda^j_k X_i - \lambda^j_i X_k]\psi = 0
        \end{equation}
        for any row $j$ and any $i,k$. Condition~\eqref{eq_treno} implies that, for every $j$, the row $\lambda^j$ and  $v:=(\Pi[X_1]\psi,\dots,\Pi[X_\rho]\psi)$ are linearly dependent; this is possible only if $v=0$, because $\lambda$ has rank at least 2 and thus one can find linearly independent rows $\lambda^{j_1},\lambda^{j_2}$. However, the fact that $\Pi[X_i]\psi=0$ for every $i$ contradicts the hypoellipticity of the sub-Laplacian $X_1^2+\dots+X_\rho^2$.
        
        \textit{General case}. Now suppose that \eqref{eq_equivalent_hypoell_indices} fails at step $z+1$ (with $z \geq 1$). Then there exist some representation $\Pi \in \widehat{\GG}$ and some $C^\infty$-vector $\psi \not= 0$ such that
        \begin{equation}
            \label{eq_contrad_hyp}
            \Pi[\mathcal{B}^{z+1}_{\lambda^j, (i_0,I)}] \psi = 0
        \end{equation}
        for any $j,I=(i_1, \ldots, i_{z+1})$ as above and $i_0 = 1, \ldots, \rho$. Now, to reduce to the base case, recall that by definition
        \begin{equation}
            \label{B_lambda recursive def}
            \mathcal{B}^{z+1}_{\lambda^j,(i_0,I)} = X_{i_0} \mathcal{B}^z_{\lambda^j,I} - \lambda^j_{i_0} X_I,
        \end{equation}
        therefore \eqref{eq_contrad_hyp} yields
        \begin{equation}
            \label{inductive identity}
            \Pi[X_{i_0} \mathcal{B}^z_{\lambda^j,I}] \psi = \lambda^j_{i_0} \Pi[X_I] \psi.
        \end{equation}
        Now for any $1 \leq k_0 \leq \rho$, multiply \eqref{inductive identity} by $\lambda^j_{k_0}$ and then subtract the same expression with $i_0$ and $k_0$ swapped, finding
        \[
            \Pi[\lambda^j_{k_0} X_{i_0} - \lambda^j_{i_0} X_{k_0}] \Pi[\mathcal{B}^z_{\lambda^j,I}] \psi = \Pi[\mathcal{B}^1_{\lambda^j, (i_0,k_0)}] \Pi[\mathcal{B}^z_{\lambda^j,I}] \psi = 0.
        \]
        Now we repeat the same computation: we write $I=(i_1,I')$ with $I' = (i_2, \ldots, i_{z+1})$ and expand $\mathcal{B}^z_{\lambda^j,I}$ to find
        \[
\Pi[\mathcal{B}^z_{\lambda^j,I}] \psi=\Pi[X_{i_1} \mathcal{B}^{z-1}_{\lambda^j,I'}] \psi - \lambda^j_{i_1} \Pi[X_{I'}] \psi   .
        \]
Left-composing the previous equality  with $\lambda^j_{k_1} \Pi[\mathcal{B}^1_{\lambda^j, (i_0,k_0)}]$, where $1 \leq k_1 \leq \rho$, and subtracting the same expression with $i_1$ and $k_1$ swapped, one achieves
        \[
            \Pi[\mathcal{B}^1_{\lambda^j, (i_0,k_0)}] \Pi[\mathcal{B}^1_{\lambda^j, (i_1,k_1)}] \Pi[\mathcal{B}^{z-1}_{\lambda^j,I'}] \psi = 0.
        \]
        Iterating this procedure one gets
        \begin{equation}
            \label{eq_contrad}
             \Pi[\mathcal{B}^1_{\lambda^j, (i_0,k_0)}]\ \Pi[\mathcal{B}^1_{\lambda^j, (i_1,k_1)}]\ \dots\ \Pi[\mathcal{B}^1_{\lambda^j, (i_{z+1},k_{z+1})}] \psi = 0,
        \end{equation}
        for any choice of $1 \leq i_a, k_a \leq \rho$. Now take $i_0 = i_1 = \cdots = i_{z+1} =: i$ and $k_0 = k_1 = \cdots = k_{z+1} =: k$. Then, \eqref{eq_contrad} and the skew-adjointness of $\mathcal{B}^1_{\lambda^j,(i,k)}$ negate \eqref{eq_equivalent_hypoell_indices} for $z=1$, consequently negating the hypoellipticity of $(\mathcal{B}^1_\lambda)^*\mathcal{B}^1_\lambda$, which is impossible. The proof of the first implication is accomplished.\medskip
        
        Conversely, suppose that $\lambda$ is of rank at most $1$: since all rows $\lambda^j$ are collinear, without loss of information we drop the superscript $j$ for the sake of convenience. We proceed by induction as above, this time to prove non-hypoellipticity.
        
        \textit{Base case}. Let $\Pi$ be the following irreducible representation, which is degenerate and 1-dimensional in the sense that it collapses all strata of the algebra except for the first horizontal one\footnote{These degenerate representations correspond to unitary irreducible representations of $\RR^\rho$, i.e. the commutative framework of classical Fourier analysis.}, acting by multiplication as
        \[
            \Pi(x) = e^{i(\lambda_1x_1+\cdots+\lambda_\rho x_\rho)}
        \]
        for any $x \in \GG \equiv \RR^d$. Indeed it is easily checked that at the Lie algebra level $\Pi$ acts as
        \begin{align}
            \notag \Pi[X_j]\psi &= i\lambda_j\cdot \psi &&\text{for all } j=1, \ldots, \rho; \\
            \label{degenerate representation} \Pi[X_I]\psi &= 0 &&\text{whenever } |I|\geq 2.
        \end{align}
        Note that here $i=\sqrt{-1}$. For this particular representation, \eqref{eq_treno} is trivially satisfied for any non-zero $C^\infty$-vector $\psi$, meaning that $\Pi[\mathcal{B}^1_{\lambda,(j,k)}]=0$ for any choice of $j,k$. Therefore $(\mathcal{B}^1_\lambda)^*\mathcal{B}^1_\lambda$ is not hypoelliptic.

        \textit{Inductive case}. Suppose $\Pi[\mathcal{B}^z_{\lambda,I}]=0$ for any choice of $|I|=z+1$. Then with \eqref{B_lambda recursive def} and \eqref{degenerate representation}, we immediately conclude that
        \[
            \Pi[\mathcal{B}^{z+1}_{\lambda,(j,I)}]\psi = \Pi[X_j \mathcal{B}^z_{\lambda,I}]\psi = 0
        \]
        for any choice of $j$. Therefore $(\mathcal{B}^{z+1}_\lambda)^*\mathcal{B}^{z+1}_\lambda$ is not hypoelliptic. This concludes the proof of the second implication.
    \end{proof}

    \begin{remark}\label{rem_altrecondnecinH^2}
        In order to prove Theorem~\ref{rank-one thm} we used the necessary conditions provided by $\mathscr A^s(D_Hu)=0$; depending on the group $\GG$, however, one might consider also different operators. For instance, in the second Heisenberg group\footnote{Using the notation introduced in §\ref{subsec_heisenberg}.} $\HH^2$ (but the argument can be easily adapted to any $\HH^n$, $n\geq 2$) one can observe that the equalities
        \[
        [X_1,X_2]u=[X_1,Y_2]u=[X_2,Y_1]u=[Y_1,Y_2]u=([X_1,Y_1]-[X_2,Y_2])u=0
        \]
        (to be read as $X_1X_2u-X_2X_1u=0$, etc.) imply that every distributional derivative $D_Hu$ is  $\mathscr R$-free, where for a $h\times 4$-valued $w$ we define
        \begin{align*}
            &\mathscr R(w)^j_1:=X_1w_2^j-X_2w_1^j\\
            &\mathscr R(w)^j_2:=X_1w_4^j-Y_2w_1^j\\
            &\mathscr R(w)^j_3:=X_2w_3^j-Y_1w_2^j\\
            &\mathscr R(w)^j_4:=Y_1w_4^j-Y_2w_3^j\\
            &\mathscr R(w)^j_5:=X_1w_3^j-Y_1w_1^j-X_2w_4^j+Y_2w_2^j.
        \end{align*}
        It is worth noticing that $\mathscr R$ provides the Heisenberg curl operator in $\HH^n,\ n\geq 2$, as per the Rumin's complex; see e.g. ~\cite[Example 3.12]{baldi_franchi}.
        
        Writing $\mathcal B_\lambda(\varphi):=\mathscr R(\lambda\varphi)$ one can check that
        \begin{align*}
            (\mathcal B_\lambda)^*\mathcal B_\lambda&=-\sum_{j=1}^h  \Big[(\lambda^j_2X_1-\lambda^j_1X_2)^2 + (\lambda^j_4X_1-\lambda^j_1Y_2)^2 
        +(\lambda^j_3X_2-\lambda^j_2Y_1)^2\\
        &\hphantom{=-\sum_{j=1}^h  \Big[}+(\lambda^j_4Y_1-\lambda^j_3Y_2)^2
        + (\lambda_3^jX_1-\lambda_1^jY_1-\lambda_4^jX_2+\lambda_2^jY_2)^2\Big]\\
        &=:-\sum_{j=1}^h \Big[(Z_1^j)^2 + (Z_2^j)^2 + (Z_3^j)^2 + (Z_4^j)^2 + (Z_5^j)^2\Big]\,. 
        \end{align*}
        Consider the standard scalar product on $\mathfrak h_1$ making $X_1,X_2,Y_1,Y_2$ orthonormal. 
        It is straightforward to check that, if $\lambda^j\neq0$, then  $Z_1^j,\dots,Z_5^j$ linearly generate the 3-dimensional subspace orthogonal to $W^j:=\lambda^j_1 X_1+\lambda^j_2X_2+\lambda^j_3 Y_1+\lambda^j_4 Y_2$. 
        If now  $\lambda^{j_1},\lambda^{j_2}$ are linearly independent, so are $W^{j_1}$ and $W^{j_2}$, hence
        \begin{itemize}
            \item[(a)] the vectors $(Z_i^{j_1})_{i=1,\dots,5}$ and $(Z_i^{j_2})_{i=1,\dots,5}$ linearly generate the space $(W^{j_1})^\perp +(W^{j_2})^\perp =\mathfrak h_1 $;
            \item[(b)] at least two vectors among  $Z^{j_1}_1,\dots,Z^{j_1}_5$ do not (Lie bracket) commute, because a 3-dimensional subspace of $\mathfrak h_1$ is never commutative.
        \end{itemize}
        It follows that $(Z_i^{j_1})_{i=1,\dots,5}$ and $(Z_i^{j_2})_{i=1,\dots,5}$ bracket-generate the whole algebra; by Theorem~\ref{thm_Hormanderdacitare}, the sum-of-squares operator $(\mathcal B_\lambda)^*\mathcal B_\lambda$ is hypoelliptic.
        
        Summarizing: if $\lambda$ has rank at least 2, then $\lambda\not\in\Lambda_{\mathscr R}^H$\footnote{Actually, it can be easily checked that $\Lambda_{\mathscr R}^H$ consists precisely of the matrices of rank at most 1.}, and Theorem~\ref{rank-one thm} in $\HH^n,\ n\geq 2$, follows again from Theorem~\ref{main theorem}.
    \end{remark}

    \appendix
    \section{A weak differentiation result in Carnot Groups}\label{app_Besicovitch}
    
    In this section we prove Theorem~\ref{weak besicovitch thm Carnot}; before this we need a couple of lemmas.
    
    \begin{lemma}[Weak doubling property]
        \label{weak doubling lemma}
        Let $\nu$ be a positive Radon measure on a Carnot group $\GG \equiv \RR^d$ of step $s$, and pick $t \in (0,1)$. Then there exists a measurable set $E^t_\nu$ such that $\nu(E^t_\nu)=0$ and
        \begin{equation}
            \label{weak doubling}
            \limsup_{r \downarrow 0} \frac{\nu(B^\GG(x,tr))}{\nu(B^\GG(x,r))} > 0 \quad \text{for all } x \in \supp(\nu) \backslash E^t_\nu,
        \end{equation}
        where $B^\GG(x,r)$ denotes the ball centered at $x \in \GG$ of radius $r > 0$ with respect to the Carnot-Carathéodory distance $d_c$.
    \end{lemma}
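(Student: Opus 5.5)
The plan is to show that the bad set where the limsup vanishes is so small, in a metric sense, that it must be $\nu$-null. The argument rests only on the homogeneity of $d$ and on the fact that $\GG$ has Hausdorff dimension $Q$; it uses neither Besicovitch nor Vitali covering.

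Fix $t\in(0,1)$ and an exponent $M>Q$. Let $F$ be the set of $x\in\supp\nu$ for which the limsup in \eqref{weak doubling} is $0$. For $k\in\NN$ define
\[
F_k:=\Big\{x\in\supp\nu:\ \nu(B^\GG(x,tr))\le t^{M}\,\nu(B^\GG(x,r))\ \text{for all } r\in(0,1/k)\Big\}.
\]
Then $F\subseteq\bigcup_k F_k$, because a vanishing limsup means the ratio is eventually below $t^M$. It therefore suffices to show that $\nu(F_k\cap K)=0$ (as an outer measure) for every compact $K$. For $E^t_\nu$ we can then take a Borel set of $\nu$-measure zero containing $\bigcup_k F_k$, which sidesteps any measurability issue for $F_k$ itself.

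\emph{Decay estimate.} Fix $x\in F_k$ and $r_0:=t/k$. Iterating the defining inequality gives
\[
\nu(B^\GG(x,t^j r_0))\le t^{jM}\,\nu(B^\GG(x,r_0))\qquad\text{for every } j\ge 0.
\]
Given $\rho<r_0$, choose $j$ with $t^{j+1}r_0\le\rho<t^j r_0$. Then
\[
\nu(B^\GG(x,\rho))\le \nu(B^\GG(x,r_0))\,t^{-M}\,(\rho/r_0)^{M}\le C_{K,k}\,\rho^{M},
\]
where $C_{K,k}$ is uniform for $x\in K$ because $\nu(B^\GG(x,r_0))\le\nu(\{y:d(y,K)\le 1\})<\infty$. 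Applying this with $\rho$ replaced by $t\rho$ in the last step, we even get $\nu(B^\GG(x,\rho))\le C'\,t^{M\ell}\rho^{M}$ for every $\ell$ once $\rho$ is small enough. Hence the upper $M$-density $\limsup_{\rho\to0}\nu(B^\GG(x,\rho))/\rho^M$ is $0$ at every $x\in F_k$.

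\emph{Covering step.} Since $M>Q$ and $\Haus[Q]$ is locally finite on $\GG$ (it is a Haar measure), we have $\Haus[M](K)=0$. Fix $\delta>0$ and cover $F_k\cap K$ by sets $S_i$ with $\sum_i(\operatorname{diam}S_i)^M<\delta$ and each diameter smaller than $r_0$. We may discard any $S_i$ not meeting $F_k\cap K$. For each remaining $S_i$ pick $x_i\in S_i\cap F_k\cap K$; then $S_i\subseteq B^\GG(x_i,2\operatorname{diam}S_i)$. Consequently
\[
\nu(F_k\cap K)\le\sum_i C_{K,k}\,2^M(\operatorname{diam}S_i)^M\le 2^M C_{K,k}\,\delta .
\]
Letting $\delta\to0$ gives $\nu(F_k\cap K)=0$. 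Note that this only uses balls centered at points of the set being measured, so no covering theorem is needed; this is exactly why the argument survives the failure of Besicovitch in $\GG$.

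The step requiring most care is the decay estimate. One must use that the inequality holds at every scale below $1/k$, not merely along a sequence; this is why the sets $F_k$ are defined with "for all $r$". Everything else is routine bookkeeping, together with the passage to a measurable null superset.
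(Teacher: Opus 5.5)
Your argument is correct, and it takes a genuinely different route from the paper's. The paper defines the bad set through the discrete scales $t^k$ and uses Egorov's theorem to make the decay $\nu(B^\GG(x,t^{k+1}))\le\varepsilon\,\nu(B^\GG(x,t^k))$ uniform on a compact set of positive measure. It then transfers the resulting bound to Euclidean balls via Corollary~\ref{estimate d_c}; this is why it needs $\varepsilon=t^{sd}/2$, so as to beat the exponent $sd$ rather than $Q$. It concludes by summing over a disjoint family given by the Besicovitch covering theorem in $\RR^d$.

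You instead get uniformity without Egorov, by stratifying the bad set into the sets $F_k$, where the inequality holds at \emph{all} scales below $1/k$. This gives the uniform bound $\nu(B^\GG(x,\rho))\le C_{K,k}\rho^M$ on $F_k\cap K$ with $M>Q$. You then conclude by the mass-distribution principle against $\mathscr{H}^M(K)=0$, working with outer measure and finally a Borel null superset. This buys an intrinsic proof with no Egorov, no measurability check, no comparison with Euclidean balls and no covering theorem. It works verbatim in any boundedly compact metric space whose compact sets have finite Hausdorff dimension. The paper's route, by contrast, leans on the Euclidean structure of $\GG\equiv\RR^d$.

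Two small points to tidy up.
\begin{enumerate}
\item The aside claiming that the upper $M$-density vanishes at every point of $F_k$ is not right. For $\nu=d(0,y)^{M-Q}\,d\Leb[d](y)$ one has $\nu(B^\GG(0,\rho))=c\,\rho^M$ with $c>0$. Hence the ratio at $0$ is exactly $t^M$, so $0\in F_k$ for every $k$, yet the upper $M$-density at $0$ equals $c>0$. The vanishing density holds at points of $F$, not of $F_k$. Your covering step only uses the uniform bound $\nu(B^\GG(x,\rho))\le C_{K,k}\rho^M$, so simply drop that remark.
\item Since you enlarge $S_i$ to $B^\GG(x_i,2\operatorname{diam}S_i)$, require $\operatorname{diam}S_i<r_0/2$ rather than $r_0$, so that the radius stays in the range of the decay estimate. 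Also treat sets with $\operatorname{diam}S_i=0$ separately; the decay bound directly gives $\nu(\{x_i\})=0$.
\end{enumerate}
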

    \begin{proof}
        Let $E^t_\nu$ be the $\nu$-measurable set where
        \[
        \lim_{k \rightarrow \infty} \frac{\nu(B^\GG(x,t^{k+1}))}{\nu(B^\GG(x,t^k))} = 0
        \]
        and notice that \eqref{weak doubling} is satisfied outside of $E^t_\nu$. Now assume by contradiction that $\nu(E^t_\nu) > 0$: then, since the functions
        \[
        \frac{\nu(B^\GG(x,t^{k+1}))}{\nu(B^\GG(x,t^k))}
        \]
        are measurable, by Egorov's Theorem there exists a compact $K \subseteq E^t_\nu$ such that $\nu(K) > 0$ on which the limit is uniform. In other words, for any $\varepsilon > 0$ there exists some index $k_0$ such that
        \[
        \nu(B^\GG(x,t^{k+1})) \leq \varepsilon \nu(B^\GG(x,t^k))
        \]
        for all $x \in K$ and $k \geq k_0$, therefore
        \[
        \nu(B^\GG(x,t^k)) \leq \frac{\nu(B^\GG(x,t^{k_0}))}{\varepsilon^{k_0}} \varepsilon^k.
        \]
        By the compactness of $K$, the fact that $t < 1$ and that $\nu$ is Radon, the fraction on the right hand side is uniformly bounded in $x \in K$. Hence by Corollary \ref{estimate d_c} we gather that
        \[
        \nu(B^{\RR^d}(x,t^{sk})) \leq C \varepsilon^{k-k_0}
        \]
        for some constant $C > 0$ ($B^{\RR^d}$ indicates a ball w.r.t. the euclidean norm).
        
        Now choose  $\varepsilon =  t^{sd}/2$: we obtain the sequence $r_k := t^{sk}$ which satisfies
        \[
        \nu(B^{\RR^d}(x,r_k)) \leq C 2^{-(k-k_0)} \Bigl(\frac{r_k}{r_{k_0}}\Bigr)^d.
        \]
        For every $k_1>k_0$ we apply the Vitali-Besicovitch Covering Theorem to the family of Euclidean balls
        \[
        \mathcal{F}_{k_1} := \left\{\overline{B^{\RR^d}(x,r_k)}\right\}_{x \in K, k \geq k_1}
        \]
        to obtain a disjoint subcover $\mathcal{G}_{k_1} \subseteq \mathcal{F}_{k_1}$ such that
        \begin{align*}
        \nu(K) & = \sum_{B^{\RR^d}(x,r_k) \in \mathcal{G}_{k_1}} \nu(B^{\RR^d}(x,r_k))\\
        & \leq C\:\frac{2^{-(k_1-k_0)}}{r_{k_0}^d}  \sum_{B^{\RR^d}(x,r_k) \in \mathcal{G}_{k_1}} r_k^d\\
        &\leq C \:\frac{2^{-(k_1-k_0)}}{r_{k_0}^d} \Leb[d](K + B(0,1)) \longrightarrow 0 \quad \text{as } k_1 \to+\infty.
         \end{align*}
        This contradicts $\nu(K) > 0$, therefore $\nu(E^t_v) = 0$.
    \end{proof}
    
    Now let us define the sets
    \[
    E_j := \left\{ x \in \supp(\nu) \backslash E^t_\nu : \ \limsup_{r \downarrow 0} \frac{\nu(B^\GG(x,tr))}{\nu(B^\GG(x,r))} > \frac{1}{j} \right\}
    \]
    which cover $\supp(\nu) \backslash E^t_\nu$ completely and such that $\bigcup_j E_j$ has full $\nu$-measure by the lemma we have just shown. For any $x \in E_j$ we may pick a sequence $r_k^x$ of radii such that $\nu(B^\GG(x,tr_k^x)) \geq \frac{1}{j} \nu(B^\GG(x,r_k^x))$. Then for any $f \in L^1(\GG, \nu)$ we define the maximal function
    \[
    Mf: \bigcup_j E_j \longrightarrow \RR \qquad Mf(x) := \sup_{k \in \NN} \fint_{B^\GG(x,r_k^x/5)} |f| \ d\nu.
    \]
    Note that $Mf$ might not be $\nu$-measurable, because the dependence of $r_k^x$ on $x$ might be arbitrarily irregular. Nevertheless we can reach this next weak $(1,1)$ estimate:
    \begin{lemma}
        \label{weak estimate differentiation Carnot}
        Let $E_j$ and $r_k^x$ be defined as above. Then for any $\lambda > 0$ we have
        \[
        \nu^*(\{ Mf > \lambda \} \cap E_j) \leq \frac{j}{\lambda} \int_{\GG} |f| \ d\nu,
        \]
        where $\nu^*$ denotes the outer measure associated with $\nu$.
    \end{lemma}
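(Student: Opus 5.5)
We will prove the estimate by a Vitali-type covering argument that uses the weak doubling along the chosen radii $r_k^x$. Fix $\lambda>0$ and $\delta>0$, and set $A:=\{Mf>\lambda\}\cap E_j$. For each $x\in A$ there is an index $k$ with
\[
\fint_{B^\GG(x,r_k^x/5)}|f|\,d\nu>\lambda ,
\]
and, since the same inequality may be required for infinitely many $k$ only if we restrict attention, we first handle a bounded portion: we intersect $A$ with a large ball $B^\GG(0,R)$ and let $R\to\infty$ at the end by monotonicity of outer measure. Because $r_k^x\downarrow 0$ and $x\in\supp\nu$, we may choose the radii small (at most $1$). We obtain a family $\mathcal F$ of balls $B^\GG(x,r_x/5)$ with $r_x:=r^x_{k(x)}\le 1$, covering $A\cap B^\GG(0,R)$, such that
\[
\nu(B^\GG(x,r_x/5))<\frac1\lambda\int_{B^\GG(x,r_x/5)}|f|\,d\nu .
\]

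We then apply the basic $5r$-covering lemma, valid in any metric space for families of balls with bounded radii (no Besicovitch property is needed). It yields a disjoint countable subfamily $\{B^\GG(x_i,r_i/5)\}_i$ such that $\bigcup_{\mathcal F}B\subseteq\bigcup_i B^\GG(x_i,r_i)$. Countability follows from disjointness together with the fact that each ball has positive $\nu$-measure ($x_i\in\supp\nu$) and $\nu$ is Radon on the bounded region.

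Now we use the weak doubling encoded in $E_j$, taking $t=1/5$ in the construction (i.e.\ the $E_j$ are defined with $t=1/5$, matching the factor $1/5$ in $Mf$). Along the chosen radii,
\[
\nu(B^\GG(x_i,r_i))\le j\,\nu(B^\GG(x_i,r_i/5)).
\]
Hence
\[
\nu^*\bigl(A\cap B^\GG(0,R)\bigr)\le\sum_i\nu(B^\GG(x_i,r_i))\le j\sum_i\nu(B^\GG(x_i,r_i/5))\le\frac j\lambda\sum_i\int_{B^\GG(x_i,r_i/5)}|f|\,d\nu\le\frac j\lambda\int_\GG|f|\,d\nu,
\]
where the last inequality uses disjointness. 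Letting $R\to\infty$ concludes the proof. Outer measure is used throughout, so the possible non-measurability of $Mf$ is harmless.

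The main point to check is the compatibility between the doubling estimate and the covering: the doubling is only available at the specific radii $r_k^x$. It is therefore essential that the balls in $\mathcal F$ have exactly radius $r_k^x/5$, so that their fivefold enlargements are the balls $B^\GG(x,r_k^x)$ on which the ratio bound holds. If the paper fixes a general $t$, the same argument works with the factor $5$ replaced accordingly, or one simply chooses $t=1/5$.
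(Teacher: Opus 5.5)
Your proposal is correct and is essentially the paper's argument: take $t=1/5$, apply the $5r$-covering lemma to the balls $B^\GG(x,r_k^x/5)$ with $r_k^x\le 1$, use the ratio bound $\nu(B^\GG(x,r_k^x))\le j\,\nu(B^\GG(x,r_k^x/5))$ built into the choice of radii for $x\in E_j$, and conclude by disjointness. The localization to $B^\GG(0,R)$ is unnecessary, since radii $\le 1$ already make the covering lemma applicable and disjoint open balls in the separable space $\GG$ are countable; moreover, passing to the limit $R\to\infty$ needs continuity from below of the regular outer measure $\nu^*$, not monotonicity alone.
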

    \begin{proof}
        Choose $t = 1/5$ and, for any $x \in \{ Mf > \lambda \} \cap E_j$, choose $k = k(x,\lambda)$ such that
        \[
        r_k^x \leq 1, \qquad \fint_{B^\GG(x,r_k^x/5)} |f| \ d\nu > \lambda, \qquad \frac{\nu(B^\GG(x,r_k^x/5))}{\nu(B^\GG(x,r_k^x))} > \frac{1}{j}
        \]
        by construction. Clearly the family $\mathcal{F} = \{B^\GG(x,r_{k(x,\lambda)}^x/5)\}_x$ covers $\{ Mf > \lambda \} \cap E_j$, hence the classical 5-covering theorem (see \cite[Theorem 1.2]{heinonen}) provides a disjoint subfamily $\mathcal{F}' \subseteq \mathcal{F}$ such that $\{5B\}_{B \in \mathcal{F}'}$ also covers $\{ Mf > \lambda \} \cap E_j$. Therefore
        \[
        \nu^*(\{ Mf > \lambda \} \cap E_j) \leq \sum_{B \in \mathcal{F}'} \nu(5B) \leq j \sum_{B \in \mathcal{F}'} \nu(B) \leq \frac{j}{\lambda} \int_{\GG} |f| \ d\nu,
        \]
        which is the conclusion.
    \end{proof}
    
    \begin{proof}[Proof of Theorem \ref{weak besicovitch thm Carnot}]
        Consider the following sets
        \[
        A_{n,j} := \left\{ x \in E_j : \ \limsup_{k \rightarrow \infty} \fint_{B^\GG(x,r_k^x)} |f-f(x)| \ d\nu > \frac{1}{n} \right\}
        \]
        and notice that for any $g \in C(\GG)$ we have
        \begin{align*}
            A_{n,j} &\subseteq \left\{ x \in E_j : \ \limsup_{k \rightarrow \infty} \fint_{B^\GG(x,r_k^x)} |f-g| \ d\nu > \frac{1}{2n} \right\} \cup\\
            &\qquad\cup\left\{ x \in E_j : \ |f(x)-g(x)| > \frac{1}{2n} \right\} \\
            &\subseteq \left\{ x \in E_j : \ M|f-g|(x) > \frac{1}{2n} \right\} \cup \left\{ x \in E_j : \ |f(x)-g(x)| > \frac{1}{2n} \right\}.
        \end{align*}
        By subadditivity of $\nu^*$, Lemma \ref{weak estimate differentiation Carnot} ensures that
        \[
        \nu^*(A_{n,j}) \leq 2nj \int_{\GG} |f-g| \ d\nu + 2n \int_{E_j} |f-g| \ d\nu,
        \]
        therefore it is sufficient to let $g \rightarrow f$ in $L^1(\GG)$ to obtain $\nu^*(A_{n,j}) = 0$. Moreover the statement \eqref{measure differentiation Carnot} fails on the set
        \[
        F := \bigcup_{n,j} A_{n,j} \cup \left( \supp(\nu) \backslash \bigcup_j E_j \right),
        \]
        which we have just shown to be of null $\nu^*$-measure: by the completeness of the $\sigma$-algebra of $\nu$-measurable sets, $F$ is a $\nu$-measurable null set, which is exactly the statement.
    \end{proof}
    
    \section{Existence of tangent measures}\label{app_misuretangenti}
    Here we prove Proposition~\ref{prop existence of tangent measures}. As in Section~\ref{subsec_tools}, we fix an open set $\Omega$ and a Radon measure $\mu \in \mathcal{M}_\loc(\Omega;V)$  taking values in a real finite dimensional vector space $V$. Given $x_0 \in \supp \mu$ and a sequence  $r_j \downarrow 0$ we write
    \[
    \nu_j := \frac{T^{x_0,r_j}_\#\mu}{|\mu|(B^\GG(x_0,r_j))}
    \]
    where $	T^{x_0,r_j}(x) := \delta_{1/r_j} (x_0^{-1} \cdot x)$.
    
    The following lemma is an adaptation of \cite[Theorem 2.44]{ambrosio-fusco-pallara}.
    
    \begin{lemma}
        \label{lemma tangent of the total variation}
        At $|\mu|$-almost every $x_0 \in \Omega$ there exists a sequence of radii $r_j^{x_0} = r_j \downarrow 0$ such that
        \[
        \nu_j \overset{*}{\rightharpoonup} \nu \quad \Longleftrightarrow \quad |\nu_j| \overset{*}{\rightharpoonup} |\nu|.
        \]
        If this is the case, $\nu = \frac{d\mu}{d|\mu|}(x_0) |\nu|$, and in particular
        \[
        \operatorname{Tan}(\mu,x_0) = \frac{d\mu}{d|\mu|}(x_0) \operatorname{Tan}(|\mu|,x_0) \qquad |\mu| \text{-a.e. in } \Omega.
        \]
    \end{lemma}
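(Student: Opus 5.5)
The plan is to show that along a suitable sequence of radii, $\nu_j$ and $\sigma_0|\nu_j|$ become close in total variation on every fixed ball, where $\sigma_0:=\sigma(x_0)$ and $\sigma:=\frac{d\mu}{d|\mu|}$. Here $|\sigma|=1$ holds $|\mu|$-a.e. and $\sigma\in L^1_\loc(|\mu|)$. Precisely, I aim to find, for $|\mu|$-a.e. $x_0$, radii $r_j\downarrow0$ such that for every $R>0$
\begin{equation*}
\lim_{j\to\infty}\frac{1}{|\mu|(B^\GG(x_0,r_j))}\int_{B^\GG(x_0,Rr_j)}|\sigma-\sigma_0|\,d|\mu| \;=\;\lim_{j\to\infty}\bigl|\nu_j-\sigma_0|\nu_j|\bigr|(B^\GG_R)=0 .
\end{equation*}
The equality of the two expressions is just the change of variables $T^{x_0,r_j}$, since $\nu_j=(\sigma\circ (T^{x_0,r_j})^{-1})\,|\nu_j|$. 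Theorem~\ref{weak besicovitch thm Carnot} alone is not enough, because it only controls the ball of radius $r_j$ itself, while weak-* convergence on all of $\GG$ needs every ball of radius $Rr_j$. The main obstacle is therefore to compare $|\mu|(B^\GG(x_0,Rr_j))$ with $|\mu|(B^\GG(x_0,r_j))$ without any doubling assumption.

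To handle this, I would rerun the argument of Appendix~\ref{app_Besicovitch} for every $t=1/m$ with $m\ge5$, applied to $\nu=|\mu|$ and $f=\sigma$ (localized to a compact set if needed). Lemma~\ref{weak doubling lemma} with $t=1/m$ gives a $|\mu|$-null set outside of which $x$ lies in some $E_{j}$. For such $x$ there is a sequence $\rho_k=\rho^{(m)}_k(x)\downarrow0$ with
\[
|\mu|(B^\GG(x,\rho_k/m))\ge c_m\,|\mu|(B^\GG(x,\rho_k)),\qquad c_m=c_m(x)>0 .
\]
Since $m\ge5$, this sequence also satisfies the inequality with $\rho_k/5$ in place of $\rho_k/m$. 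So the maximal-function estimate of Lemma~\ref{weak estimate differentiation Carnot} and the proof of Theorem~\ref{weak besicovitch thm Carnot} go through verbatim. They show that, outside another null set, $\varepsilon_k:=\fint_{B^\GG(x,\rho_k)}|\sigma-\sigma(x)|\,d|\mu|\to0$ along this same sequence.

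Taking the countable union over $m$ of the exceptional sets, I get a $|\mu|$-null set $N$. For $x_0\notin N$ and each $m$, I choose an index $k_m$ with $\rho^{(m)}_{k_m}<1/m$ and $\varepsilon_{k_m}<c_m/m$, and set $r_m:=\rho^{(m)}_{k_m}/m$. Then for any fixed $R$ and all $m\ge R$,
\[
\frac{1}{|\mu|(B^\GG(x_0,r_m))}\int_{B^\GG(x_0,Rr_m)}|\sigma-\sigma_0|\,d|\mu|\le \frac{\varepsilon_{k_m}\,|\mu|(B^\GG(x_0,\rho_{k_m}))}{c_m\,|\mu|(B^\GG(x_0,\rho_{k_m}))}<\frac1m ,
\]
which proves the displayed claim. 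I stress that no uniform mass bound on $|\nu_m|(B^\GG_R)$ is needed, since the lemma only asserts an equivalence of convergences.

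With the claim in hand, the equivalence is soft. Write $\nu_j=\sigma_0|\nu_j|+\tau_j$ with $|\tau_j|(B^\GG_R)\to0$ for every $R$, so that $\tau_j\overset{*}{\rightharpoonup}0$ locally. Suppose first that $|\nu_j|\overset{*}{\rightharpoonup}\lambda$. Then $\nu_j\overset{*}{\rightharpoonup}\sigma_0\lambda=:\nu$, and $|\nu|=\lambda$ because $|\sigma_0|=1$. Conversely, suppose $\nu_j\overset{*}{\rightharpoonup}\nu$. Pairing with $\sigma_0$ gives $|\nu_j|=\langle\sigma_0,\nu_j\rangle-\langle\sigma_0,\tau_j\rangle\overset{*}{\rightharpoonup}\langle\sigma_0,\nu\rangle=:\lambda\ge0$. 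The previous direction then yields $\nu=\sigma_0\lambda$ and $|\nu|=\lambda$. In both cases $\nu=\frac{d\mu}{d|\mu|}(x_0)|\nu|$. The identity $\operatorname{Tan}(\mu,x_0)=\sigma_0\operatorname{Tan}(|\mu|,x_0)$ follows by the same computation for limits along the selected radii and their subsequences. For arbitrary radii the same argument applies whenever the displayed Lebesgue-type condition holds along them, which is the form in which the identity is used in Proposition~\ref{prop existence of tangent measures}.
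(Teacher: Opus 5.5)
Your route differs from the paper's, and it targets exactly the weak spot you identified. The paper applies Theorem~\ref{weak besicovitch thm Carnot} once to $f=\frac{d\mu}{d|\mu|}$ and takes $r_j=r_j^{x_0}/R$, with $R$ chosen so that $B^\GG(0,R)\Supset\supp\varphi$. It then estimates $\int\varphi\,dT^{x_0,r_j}_\#|\mu|-\int\varphi\,f(x_0)\cdot dT^{x_0,r_j}_\#\mu$ after dividing by $|\mu|(B^\GG(x_0,Rr_j))$. This controls one scale at a time, so its radii depend on the test function.

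Your diagonalization over the doubling ratios $1/m$ instead produces a single sequence, normalized by $|\mu|(B^\GG(x_0,r_j))$, with $\bigl|\nu_j-\sigma_0|\nu_j|\bigr|(B^\GG_R)\to0$ for every $R$. The equivalence then follows softly and needs no local mass bounds, and your argument for that part is correct. The price is rerunning Appendix~\ref{app_Besicovitch} for countably many ratios.

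One step is misstated, however: the covering argument does not give $\varepsilon_k=\fint_{B^\GG(x,\rho_k)}|\sigma-\sigma(x)|\,d|\mu|\to0$ on the \emph{outer} balls.
\begin{itemize}
\item The maximal function of Lemma~\ref{weak estimate differentiation Carnot} averages over $B^\GG(x,\rho_k/5)$, and it has to.
\item The $5r$-covering step needs $|\mu|(5B)\le c^{-1}|\mu|(B)$ for the disjoint balls $B$ over which one averages. Your inequality gives this for $B=B^\GG(x,\rho_k/5)$. For $B=B^\GG(x,\rho_k)$ it would require control of $B^\GG(x,5\rho_k)$, which you do not have.
\item The proof of Theorem~\ref{weak besicovitch thm Carnot} writes $A_{n,j}$ with averages over $B^\GG(x,r_k^x)$. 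Its inclusion into $\{M|f-g|>\frac1{2n}\}$, however, is valid only for averages over $B^\GG(x,r_k^x/5)$.
\end{itemize}
So a verbatim rerun only gives $\varepsilon'_k:=\fint_{B^\GG(x,\rho_k/5)}|\sigma-\sigma(x)|\,d|\mu|\to0$.

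This is nevertheless enough, because $m\ge5$ gives you three nested scales, and
\[
|\mu|(B^\GG(x,\rho_k/m))\ge c_m|\mu|(B^\GG(x,\rho_k))\ge c_m|\mu|(B^\GG(x,\rho_k/5)).
\]
Choose $k_m$ with $\varepsilon'_{k_m}<c_m/m$. For $R\le m/5$ you have $B^\GG(x_0,Rr_m)\subseteq B^\GG(x_0,\rho_{k_m}/5)$, so your ratio is bounded by $\varepsilon'_{k_m}/c_m<1/m$. Since $m/5\to\infty$, every fixed $R$ is eventually covered, and the rest of your proof is unchanged.

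Finally, as you acknowledge, the $\operatorname{Tan}$ identity comes out only for blow-ups along radii satisfying the Lebesgue-type condition. The paper's argument gives no more, since Theorem~\ref{weak besicovitch thm Carnot} supplies only one sequence per point. Your claim that this restricted form is what Proposition~\ref{prop existence of tangent measures} uses is not quite accurate, though. The radii there come from its Step~3, which selects them by the mass-ratio bounds alone. They are therefore not automatically of the Lebesgue type.
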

    \begin{proof}
        We apply Theorem~\ref{weak besicovitch thm Carnot} to $f := \frac{d\mu}{d|\mu|}$ and the Radon measure $|\mu|$, obtaining at $|\mu|$-almost every point $x_0 \in \Omega$ an infinitesimal sequence $r_j := r_j^{x_0}/R$ of radii (where $R > 0$ is to be chosen later) such that
        \begin{equation}
            \label{lebesgue point for polar}
            \lim_{j \rightarrow \infty} \fint_{B^{\GG}(x_0,r_jR)} |f-f(x_0)| d|\mu| = 0.
        \end{equation}
        Now pick any $\varphi \in C_c(\GG)$. Calling $c_j = |\mu|(B^\GG(x_0, r_jR))^{-1}$, and for $R > 0$ large enough so that $B^\GG(0,R) \Supset \supp \varphi$ we get
        \begin{align*}
            \hspace{40pt}&\hspace{-40pt} \int \varphi \ dT^{x_0,r_j}_\# |\mu| - \int \varphi \ f(x_0) \cdot dT^{x_0,r_j}_\# \mu = \\
            &= \int \varphi(y) \left[1 - \langle f(x_0) , f(x_0 \cdot \delta_{r_j}(y)) \rangle\right] \ dT^{x_0,r_j}_\# |\mu|(y) \\
            &= \int \varphi(\delta_{1/r_j}(x_0^{-1}x)) \left[1 - \langle f(x_0) , f(x) \rangle\right] \ d|\mu|(x),
        \end{align*}
        and since $|1 - \langle f(x_0), f(x) \rangle| \leq |f(x_0) - f(x)|$,
        \begin{align*}
            \hspace{40pt}&\hspace{-40pt} c_j \left| \int \varphi \ dT^{x_0,r_j}_\# |\mu| - \int \varphi \ f(x_0) \cdot dT^{x_0,r_j}_\# \mu \right| \\
            &\leq \norm[\varphi][\infty] \fint_{B^{\GG}(x_0,r_j R)} |f(x_0)-f(x)| \ d|\mu|(x) \longrightarrow 0
        \end{align*}
        as $j \rightarrow \infty$ by \eqref{lebesgue point for polar}.
        
        This shows that if $\nu_j \overset{*}{\rightharpoonup} \nu$, then $|\nu_j| \overset{*}{\rightharpoonup} f(x_0) \cdot \nu =: \sigma$. If we write $\nu = g |\nu|$ for some $g \in L^1_\loc(\GG, |\nu|; \mathbb{S}^{d-1})$, notice that
        \[
        |\nu| \leq \sigma = f(x_0) \cdot \nu = \langle f(x_0) , g \rangle |\nu|  \leq |\nu|
        \]
        where the first inequality follows from the weak-* lower semicontinuity of the total variation. This implies that $g \equiv f(x_0)$ for $|\nu|$-almost every point, and thus $\sigma = |\nu|$.
        
        Conversely, if $|\nu_j|$ converges weakly-* to some (positive) measure $\sigma \in \mathcal{M}_\loc(\GG)$ then (up to an unlabeled subsequence)
        \[
        \nu_j \overset{*}{\rightharpoonup} \nu
        \]
        for some $\nu \in \mathcal{M}_\loc(\GG;V)$. The same argument as above applies, and again we discover that $\sigma = |\nu|$.
    \end{proof}
    
    The proof of Proposition~\ref{prop existence of tangent measures} is now an almost verbatim adaptation of \cite[Lemma 10, Appendix A]{Rindler-lowersemicontinuity_for_integral_functionals}.

    \begin{proof}[Proof of Proposition~\ref{prop existence of tangent measures}]
        By the locality of the definition of tangent measure, taking a large enough ball around $x_0$ we may assume $\mu$ to be supported in some compact set $K \subseteq \Omega$ containing $x_0$. Moreover, by Lemma \ref{lemma tangent of the total variation} we restrict to the case in which $\mu$ is a positive real-valued measure.
        
        \noindent\emph{Step 1}. For all bounded Borel sets $A \subseteq \GG$ by Fubini
        \[
        \mu(A) = \frac{1}{\omega_\GG r^Q} \int_{\GG} \mu(A \cap B^\GG(x,r)) \ dx,
        \]
        where $\omega_\GG$ is the volume of $B^{\GG}(0,1)$ with respect to the Haar measure $\Haus[Q]$, $Q$ being the homogeneous dimension of $\GG$ as a Carnot group. Indeed
        \begin{align*}
            \int_{\GG} \mu(A \cap B^\GG(x,r)) \ dx &= \int_{\GG} \int_{\GG} \mathds{1}_{A}(y) \mathds{1}_{B^\GG(x,r)}(y) \ d\mu(y) dx \\
            &= \int_{\GG} \mathds{1}_{A}(y) \int_{\GG} \mathds{1}_{B^\GG(y,r)}(x) \ dx d\mu(y) = \omega_\GG r^Q \mu(A).
        \end{align*}
        
        \noindent\emph{Step 2}. For all $t > 1$ we show that
        \[
        \lim_{\beta \rightarrow \infty} \limsup_{r \downarrow 0} \mu(\{ x \in K : \mu(B^\GG(x,tr)) \geq \beta \mu(B^\GG(x,r)) \}) = 0.
        \]
        Indeed, let $\varepsilon > 0$ and fix $r > 0$. Defining, for a fixed $\beta > 0$,
        \[
        E = \left\{ x \in K : \mu(B^\GG(x,tr)) \geq \beta \mu(B^\GG(x,r)) \right\},
        \]
        notice that whenever $B^\GG(x,r/2) \cap E \not= \emptyset$, taking $z \in B^\GG(x,r/2) \cap E$ we can estimate
        \[
        \beta \mu(B^\GG(x,r/2)) \leq \beta \mu(B^\GG(z,r)) \leq \mu(B^\GG(z,tr)) \leq \mu(B^\GG(x,(t+1)r)).
        \]
        Combining this with Step 1, we get
        \begin{align*}
            \mu(E) &= \frac{1}{\omega_\GG (r/2)^Q} \int_{\GG} \mu(E \cap B^\GG(x,r/2)) \ dx \\
            &\leq \frac{(2(t+1))^Q}{\omega_\GG ((t+1)r)^Q} \int_{\GG} \frac{\mu(B^\GG(x,(t+1)r))}{\beta} \ dx \\
            &= \frac{(2(t+1))^Q}{\beta} \mu(K) < \varepsilon
        \end{align*}
        choosing $\beta > \frac{\mu(K)}{\varepsilon} (2(t+1))^d$ large enough. Notice that we have shown that the claim holds even for the supremum over all $r > 0$, and not just the $\limsup$, however at the start of the proof we assumed (without loss of generality) $\mu$ to be restricted to a compact set $K$. In the general case this may fail if $r$ becomes too large, hence the $\limsup$ as $r \downarrow 0$.
        
        \noindent\emph{Step 3}. From Step 2, we know that for any $\varepsilon > 0$ and any $k \geq 2$ there exist constants $\beta_k > 0$ and $\delta_k > 0$ such that
        \[
        \mu(\{ x \in K : \mu(B^\GG(x,kr)) \geq \beta_k \mu(B^\GG(x,r)) \}) \leq \frac{\varepsilon}{2^k} \qquad \text{whenever } r \in (0, \delta_k).
        \]
        Then for $r > 0$ call
        \[
        A_r := \left\{ x \in K : \exists k \geq 2 \text{ s. t. } r \in (0,\delta_k) \text{ and } \mu(B^\GG(x,kr)) \geq \beta_k \mu(B^\GG(x,r)) \right\}
        \]
        and notice that $\mu(A_r) \leq \varepsilon$ by subadditivity. Hence also
        \[
        A := \bigcup_{i=1}^\infty \bigcap_{j=i}^\infty A_{1/j}
        \]
        satisfies $\mu(A) \leq \varepsilon$ by continuity from below, so letting $\varepsilon \rightarrow 0$ this implies $\mu(A) = 0$.
        
        Take $x \in K \backslash A$. By construction for all $i \in \NN$ there exists $j \geq i$ such that $x \not\in A_{1/j}$, hence for all $k \in \NN$ such that $1/j < \delta_k$ we have
        \[
        \mu(B^\GG(x,k/j)) < \beta_k \mu(B^\GG(x,1/j)).
        \]
        Therefore for $\mu$-almost every $x_0 \in \supp(\mu)$ there exists a sequence $r^{x_0}_j = r_j \downarrow 0$ such that
        \[
        \limsup_{j \rightarrow \infty} \frac{\mu(B^\GG(x_0,kr_j))}{\mu(B^\GG(x_0,r_j))} \leq \beta_k \qquad \text{for all } k \in \NN.
        \]
        Thus the blow-up sequence $\nu_j = [\mu(B^\GG(x_0,r_j))]^{-1} T^{x_0, r_j}_\# \mu$ is such that, for every $k\in\NN$, the sequence $\nu_j(B^\GG(0,k))$ is bounded (in $j$), hence it is weakly-* precompact in $\mathcal{M}_\loc(\Omega;V)$. Up to a subsequence, it converges weakly-* to some Radon measure $\nu \in \mathcal{M}_\loc(\Omega;V)$, which is non-zero because
        \[
        \nu(\overline{B^\GG(0,1)}) \geq \limsup_{j \rightarrow \infty} \nu_j(\overline{B^\GG(0,1)}) = 1
        \]
        by weak-* upper semicontinuity on compact sets. Similarly, with balls of smaller radius one can show that $0 \in \supp \nu$, since $x_0 \in \supp \mu$.
    \end{proof}
    
    \bibliographystyle{siam}
    \bibliography{refs}
    
\end{document}